\newcommand*{\rom}[1]{\expandafter\@slowromancap\romannumeral #1@}
\theoremstyle{definition}
\newtheorem{fact}{fact}
\newtheorem{thm}[fact]{Theorem}
\newtheorem{lemma}[fact]{Lemma}
\newtheorem{prop}[fact]{Proposition}
\newtheorem{corollary}[fact]{Corollary}
\newtheorem{defini}[fact]{Definition}
\title{The distribution of $ITRM$-recognizable reals}
\author{Merlin Carl}
\begin{document}

\begin{abstract}
Infinite Time Register Machines ($ITRM$'s) are  a well-established machine model for infinitary computations. Their computational strength relative to oracles is understood, see e.g. \cite{Koe}, \cite{KoeWe} and \cite{KoeMi}. 
We consider the notion of recognizability, which was first formulated for Infinite Time Turing Machines in \cite{HamLew} and applied to $ITRM$'s in \cite{ITRM}. A real $x$ is $ITRM$-recognizable
iff there is an $ITRM$-program $P$ such that $P^{y}$ stops with output $1$ iff $y=x$, and otherwise stops with output $0$. In \cite{ITRM}, it is shown that the recognizable reals are not contained in the computable reals.
Here, we investigate in detail how the $ITRM$-recognizable reals are distributed along the canonical well-ordering $<_{L}$ of G\"odel's constructible hierarchy $L$. In particular, we prove that the recognizable reals have gaps in $<_{L}$,
that there is no universal $ITRM$ in terms of recognizability and consider a relativized notion of recognizability. 
\end{abstract}

\maketitle

\section{Preliminaries}
Infinite Time Register Machines ($ITRM$'s) are a machine model for infinite computations introduced by Peter Koepke and Russell Miller in \cite{KoeMi}. We will describe this model only shortly. Detailed descriptions of $ITRM$'s and
all of the results about these machines we will use here can be found in \cite{KoeMi} and \cite{ITRM}.\\
An $ITRM$ resembles in most of its features a classical universal register machine ($URM$) from \cite{Cutl}: It has finitely many registers $R_{1},...,R_{n}$ each of which can store one natural number. An $ITRM$-program consists
of finitely many lines, each of which contains one command. Commands are the increasing and decreasing of a register content by $1$, copying a register content to another register, evaluating an oracle, jumping to a certain program line provided a certain register 
content is $0$, and stopping.\\
In contrast to $URM$'s, $ITRM$'s allow an arbirary ordinal as their running time. Accordingly, the definition of an $ITRM$-computation now has to take care of limit steps. At successor ordinals, we define the computation in the same way as for $URM$'s.
If $\lambda$ is a limit ordinal, we set the content $R_{i\lambda}$ of the $i$-th register $R_{i}$ at time $\lambda$ to $liminf_{\iota<\lambda}R_{i\iota}$ iff this limit exists, and to $0$ otherwise. Likewise, the active program line $Z_{\lambda}$
to be carried out in the $\lambda$th step is $liminf_{\iota<\lambda}Z_{\iota}$, where the limit always exists as the set of lines is finite and their indices are therefore bounded.\\

\begin{defini}
 $x\subseteq\omega$ is $ITRM$-computable in the oracle $y\subseteq\omega$ iff there exists an $ITRM$-program $P$ such that, for $i\in\omega$, $P$ with oracle $y$ stops whatever natural number $j$ is in its first register at the start of the computation and returns
$1$ iff $j\in x$ and otherwise returns $0$. A real computable in the empty oracle is simply called computable. 
\end{defini}

Apart from computability, which is a direct analogue of the corresponding finite concept, there is a different notion of how an $ITRM$ can 'handle' a real number, which has no interesting analogue in the finite. A classical $URM$ $R$ can only process a finite
part of each oracle, and hence, for each real $r$, there is an open environment $u$ of $r$ such that $R$ cannot distinguish the elements of $u$. The computing time of an $ITRM$, on the other hand, allows it to repeatedly consider each bit of a real number. 
Hence, it has a chance of identifying individual real numbers. Numbers for which this is possible are called 'recognizable'.

\begin{defini}
Let $r\in\mathfrak{P}(\omega)$. Then $r$ is recognizable iff there is an $ITRM$-program $P$ such that $P^{x}$ stops with output $1$ iff $x=r$ and otherwise stops with output $0$.
\end{defini}

This work originated in \cite{ITRM} with the proof of the Lost Melody Theorem for $ITRM$'s. The basic results on gaps and the idea of relativization are contained in \cite{Carl}. 
Most of the other results were obtained at or shortly after CiE $2012$, where this work was presented (see \cite{Recog}). I thank Philip Welch for two short, but very helpful conversations
on this topic.

Most of our notation is standard. $ZF^{-}$ is $ZF$ set theory without the power set axiom. $\mathfrak{P}(x)$ will denote the power set of $x$. For an $ITRM$-program $P$, $P^{x}(i)\downarrow=j$ means that the program $P$ with oracle $x$ with initial input $i$
in its first register stops with output $j$ in register $1$. We take $R_1$ to be the generic register for input and output and will not care about such details in the further course of this paper. 
By $\omega_{\iota}^{CK}$, we denote the $\iota$-th admissible ordinal, where $\iota\in On$. When we consider admissible ordinals relative to a real $x$, we write $\omega_{\iota}^{CK,x}$. For $X\subseteq L_{\alpha}$, $\Sigma_{1}^{L_{\alpha}}\{X\}$ 
denotes the $\Sigma_{1}$-Skolem hull of $X$ in $L_{\alpha}$. When $H$ is a $\Sigma_{1}$-substructure of some $L_{\alpha}$, then $\pi:H\equiv L_{\gamma}$ and $\pi:H\rightarrow_{coll}L_{\gamma}$ denote the Mostowski collapse of $H$ to $L_{\gamma}$ with isomorphism $\pi$. Throughout the paper, $p:\omega\times\omega\rightarrow\omega$ denotes the canonical bijection
between $\omega\times\omega$ and $\omega$.

\section{Fine structure and Lost Melodies}

We recall some basic facts about the fine structure of G\"odel's constructible universe $L$, $ITRM$-computability and recognizable reals. The results on $ITRM$s can be found in \cite{Koe}, \cite{KoeMi} and \cite{ITRM}. The canonical source for $L$ is \cite{Jen}.\\

\begin{thm}{\label{compstrength1}}
There is a function $g:\omega\rightarrow\omega$ such that $x$ is computable by an $ITRM$-program using $g(n)$ registers in the oracle $y$ if $x\in L_{\omega_{n}^{CK,y}}[y]$.
\end{thm}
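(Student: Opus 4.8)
The plan is to establish this as the oracle-relativization of the Koepke--Miller characterization of the $ITRM$-computable reals: I would reproduce the argument of \cite{KoeMi}, as sharpened in \cite{ITRM}, uniformly replacing each $L_\beta$ by $L_\beta[y]$, each $\omega_k^{CK}$ by $\omega_k^{CK,y}$, and allowing the programs to query the $y$-oracle freely. The proof is by induction on $n$, and it yields $g$ of the form $g(n)=g_0+c\,n$ for fixed constants $g_0,c$ whose values are read off at the end from the register usage of the programs that are built.

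First I would reduce the problem to reading a real off a code for a countable level of the $y$-constructible hierarchy. Given $x\in L_{\omega_n^{CK,y}}[y]$, let $\alpha<\omega_n^{CK,y}$ be least with $x\in L_{\alpha+1}[y]$, so that $x=\{i\in\omega:L_\alpha[y]\models\varphi(i,\vec p\,)\}$ for a fixed formula $\varphi$ and parameters $\vec p\in L_\alpha[y]$. Picking $\delta$ with $\alpha<\delta<\omega_n^{CK,y}$ and collapsing a countable elementary substructure $H\prec L_\delta[y]$ with $\{x,y,\alpha,\vec p\,\}\subseteq H$, the $y$-relativized condensation lemma — note that $\pi\restriction(\omega\cup\{y\})=\mathrm{id}$, hence $\pi(x)=x$ — produces a countable $\bar\delta<\omega_n^{CK,y}$, an ordinal $\bar\alpha=\pi(\alpha)$ and parameters $\pi(\vec p\,)$ with $x=\{i:L_{\bar\alpha}[y]\models\varphi(i,\pi(\vec p\,))\}$ and $L_{\bar\delta}[y]\equiv L_\delta[y]$. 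Thus a real $c^\ast$ coding the structure $(L_{\bar\delta}[y];\in,\bar\alpha,\pi(\vec p\,),y)$, with distinguished constants for the ordinal, the parameters and the predicate $y$, determines $x$ by an $x$-independent decoding procedure.

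The program $P$ computing $x$ hard-codes $n$, $\varphi$, and a finite theory $T$ — a fragment of $V=L[\dot y]+\mathrm{KP}$ strong enough that $\Delta_0$-satisfaction and the decoding work and that, together with $\Sigma_1$-descriptions stored in the constants, it pins down the level at which $\varphi$ is evaluated. On input $i$, $P$ searches through all reals $c$ for one that codes an extensional structure $(M;E,a,\vec q,b)$ with $E$ well-founded, $(M;E)\models T$, and the decoding of $b$ equal to $y$ (checked bitwise against the oracle); for the first such $c$ it outputs the truth value of $\varphi(i,a,\vec q)$ in $(M;E)$. For correctness: $c^\ast$ passes all the tests, so $P$ halts; and any $c$ passing the tests has, by $y$-relativized condensation, transitive collapse $(L_\gamma[y];\in)$ with $b\mapsto y$ and with the constants collapsing to the data of the reduction, so that $(M;E)\models\varphi(i,a,\vec q)$ holds iff $i\in x$, independently of which $c$ is found.

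The only step here that is not soft set theory is the test ``$E$ is well-founded'', and this is where I would invoke the $y$-relativization of the well-foundedness and halting-problem analysis of \cite{KoeMi} and \cite{ITRM}: with oracle $y$ and $g(n)$ registers an $ITRM$ can decide well-foundedness of codes for relations of rank $<\omega_n^{CK,y}$ (and, inductively, evaluate $\Sigma_1$-truth over $L_{\omega_n^{CK,y}}[y]$ on codes), by iterating the basic $ITRM$ well-foundedness test along the $y$-admissibles, each iteration costing a fixed number $c$ of extra registers. This is the source both of the induction and of the linear shape of $g$, and I expect it to be the main obstacle: one must check that the relativization is \emph{uniform} in the register count, i.e.\ that threading the $y$-oracle through the iterated test does not incur a growing number of registers per stage, so that the increment $g(n{+}1)=g(n)+c$ is genuinely constant. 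The remaining care goes into the choice of $T$ and of the $\Sigma_1$-descriptions, so that (a) the collapsed level $L_{\bar\delta}[y]$ of the reduction, with $\bar\delta<\omega_n^{CK,y}$, does satisfy $T$ with the correct data — for which one uses $L_{\bar\delta}[y]\equiv L_\delta[y]$ and the mildness of $T$, not that any level below $\omega_n^{CK,y}$ models $ZF^{-}$ — and (b) every well-founded model of $T$ whose predicate-constant decodes to $y$ collapses into the $L[y]$-hierarchy and returns the same $x$. Reading off the register count of the finished program finally gives the value $g(n)$.
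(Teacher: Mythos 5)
Your overall plan -- relativize the Koepke--Miller construction, replacing $L_\beta$ by $L_\beta[y]$ and $\omega_k^{CK}$ by $\omega_k^{CK,y}$ -- is exactly what the paper does (its proof is literally the remark that the argument of \cite{KoeMi} relativizes). But the mechanism you put at the centre of your sketch does not exist for this machine model: an $ITRM$ has finitely many registers holding natural numbers and a \emph{fixed} oracle, so the only exhaustive searches it can perform are over $\omega$ (or over $\omega$-indexed objects it can itself compute). The step ``$P$ searches through all reals $c$ for one that codes an extensional well-founded model of $T$ whose predicate decodes to $y$'' is an $ITTM$-style move (where one can write candidate reals on a tape); an $ITRM$ cannot enumerate or even name arbitrary reals, so as written the program $P$ cannot be implemented. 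Nor can this be patched by searching over indices $e$ of $ITRM$-programs in the oracle $y$: to check that the $e$-th program computes a total real one needs a halting-problem solver for programs with as many registers as $P_e$ uses, so one must fix a register bound in advance, and knowing such a bound for a program computing a suitable code is essentially the statement being proved (this is the uniformization trick of Lemma \ref{uniformcontaininglevels} later in the paper, which \emph{presupposes} Theorem \ref{compstrength1}).

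The actual relativized argument avoids any search over reals: one shows directly, by induction on $n$, that from the oracle $y$ an $ITRM$ can compute a real coding $L_\alpha[y]$ (equivalently, evaluate the truth predicate of $L_\alpha[y]$) for $\alpha<\omega_n^{CK,y}$, by iterating the halting-problem solvers of Theorem \ref{haltingproblem}: a program with more registers decides halting for programs with fewer, and this is what lets one climb from one $y$-admissible to the next; the value $g(n)$ records the registers consumed by these $n$ iterations. Your attribution of the register growth to ``deciding well-foundedness of relations of rank $<\omega_n^{CK,y}$'' is also off: the well-foundedness test of \cite{KoeMi} uses a fixed number of registers independently of the rank of the coded relation; what costs registers is the iterated halting/truth-predicate machinery, not the $WF$-check. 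The condensation part of your sketch (pinning down the level by a finite theory plus minimality, with $\pi(x)=x$ since $\pi$ is the identity on $\omega\cup\{y\}$) is fine, but it only becomes usable once the code for the collapsed level is something the machine can actually produce from $y$, which is precisely the content of the inductive Koepke--Miller construction you would need to reproduce.
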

\begin{proof}
This is a relativized version of the main result of \cite{KoeMi}. It is not hard, but rather tedious, to check that the proof given there relativizes as well.
\end{proof}

\begin{thm}{\label{compstrength2}}
If $x$ is computable in the oracle $y$ by an $ITRM$-program using $n$ registers, then $x\in L_{\omega_{1}^{CK,y}}$.
\end{thm}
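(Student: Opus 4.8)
This is the relativization of the bounding (``only if'') direction of the Koepke--Miller characterization of $ITRM$-computability from \cite{KoeMi}; exactly as with Theorem~\ref{compstrength1}, the argument there relativizes to an arbitrary oracle, and below I indicate its shape. The plan is to code a halting computation as a $\Delta_1$-recursion over $L[y]$, to bound its length below the relevant $y$-admissible level, and then to read off that the output real lies in the corresponding initial segment of the hierarchy.

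For the coding, fix a program $P$ with $n$ registers computing (the characteristic function of) $x$ in the oracle $y$, and fix $i\in\omega$. A configuration is a tuple $(r_1,\dots,r_n,\ell)\in\omega^{n}\times\{1,\dots,k\}$, with $k$ the number of lines of $P$, coded by a natural number via iterated use of the pairing function $p$. The computation sequence $(c_\beta)_{\beta\le\theta}$ is given by the obvious recursion: $c_{\beta+1}$ is obtained from $c_\beta$ and the relevant bit of $y$ by $P$'s one-step transition function, and at a limit $\lambda$ one takes the componentwise $\liminf$, a register being reset to $0$ when that $\liminf$ is infinite (the limit rule from the Preliminaries). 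As long as no halting line has been reached this recursion is uniformly $\Delta_1$ over $L_\delta[y]$ for all sufficiently large $\delta$, and since $P^{y}(i)\downarrow$ by hypothesis it runs for some ordinal length $\theta=\theta_i$ and ends in a halting configuration.

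The heart of the matter is to bound $\theta_i$ below $\alpha:=\omega_1^{CK,y}$. Suppose not, so the computation runs through all stages $<\alpha$ without halting; then $(c_\beta)_{\beta<\alpha}$ is an unbounded, non-halting run that is $\Delta_1$-definable over $L_\alpha[y]$. The goal is to produce stages $\gamma_1<\gamma_2<\alpha$ that close the computation into a loop: $c_{\gamma_1}=c_{\gamma_2}$, every register value throughout $[\gamma_1,\gamma_2)$ dominates its value at $\gamma_1$, and each register returns to that value cofinally in $\gamma_2$, so that the $\liminf$-configuration at $\gamma_2$ is again $c_{\gamma_1}$. Given such a pair, determinism together with the limit rule forces the block $[\gamma_1,\gamma_2)$ to repeat forever, so the computation never halts, contradicting $P^{y}(i)\downarrow$. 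Producing such a looping pair \emph{below} $\alpha$ is where $y$-admissibility of $L_\alpha[y]$ is used; the only genuinely delicate point is the simultaneous $\liminf$-behaviour of the $n$ registers --- a naive pigeonhole fails because $\alpha$ is countable and configurations need not recur --- and one deals with it by tracking, register by register, the ordinal past which each register dominates its eventual $\liminf$-value and the cofinal set of stages realizing it, and then locating $\gamma_1<\gamma_2$ as a suitable limit-point pair inside these sets. This is precisely the oracle-relativization of the eventual-looping analysis of \cite{KoeMi}, and it is the step I expect to be the real work; everything else is bookkeeping.

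Finally, with every halting time $\theta_i$ ($i\in\omega$) bounded below $\alpha$, the set $x=\{i\in\omega: P^{y}(i)\downarrow=1\}$ is defined over $L_\alpha[y]$ by the $\Sigma_1$ condition ``there is a coded halting computation of $P$ in oracle $y$ on input $i$ ending with output $1$'', and its complement in $\omega$ by the analogous $\Sigma_1$ condition with output $0$; hence, by admissibility ($\Delta_1$-separation), this set is an element of $L_\alpha[y]$, so $x\in L_{\omega_1^{CK,y}}$.
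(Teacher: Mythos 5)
The paper itself gives no proof of this theorem; like Theorem~\ref{compstrength1}, it is quoted as (the relativization of) a result of Koepke and Miller, so there is nothing to compare your argument against except the statement and the surrounding results. Your overall architecture --- code the run as a $\Delta_1$-recursion, bound the halting time, then extract $x$ by $\Delta_1$-separation over an admissible level --- is indeed the shape of the Koepke--Miller bounding argument. But the step you yourself flag as ``the real work'' is wrong as stated: the halting times of $n$-register programs in oracle $y$ are \emph{not} bounded by $\omega_1^{CK,y}$. The correct bound, which is the paper's own Theorem~\ref{haltingtimes}, is $\omega_{n+1}^{CK,y}$, and the dependence on $n$ is essential: admissibility of $L_{\omega_1^{CK,y}}[y]$ alone does not yield your looping pair $\gamma_1<\gamma_2$, because the ``eventual domination / cofinal return'' data for the $n$ registers is not $\Delta_1$ over that single level --- the Koepke--Miller analysis handles the registers one at a time by an induction that climbs the tower $\omega_1^{CK,y}<\omega_2^{CK,y}<\cdots<\omega_{n+1}^{CK,y}$. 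Concretely, for modest $n$ there are $n$-register programs (the well-foundedness checker, or the halting-problem solver $H_m$ for smaller $m$) whose halting times reach well beyond $\omega_1^{CK,y}$, so your claim $\theta_i<\omega_1^{CK,y}$ simply fails.

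In fact the statement as printed cannot be proved because it is not true: by Theorem~\ref{compstrength1} every real in $L_{\omega_2^{CK}}$ is $ITRM$-computable, and $L_{\omega_2^{CK}}$ contains reals outside $L_{\omega_1^{CK}}$ (Kleene's $\mathcal{O}$, for instance), so the subscript $1$ must be read as $n+1$; likewise the conclusion should be $x\in L_{\omega_{n+1}^{CK,y}}[y]$, since for nonconstructible $y$ the real $x=y$ is trivially computable from $y$ but lies in no $L_\alpha$ --- note that your own final step only delivers membership in $L_\alpha[y]$, and the silent drop of the $[y]$ is a second, smaller gap. With the bound corrected to $\omega_{n+1}^{CK,y}$ and the relativized hierarchy restored, your argument becomes the standard one, but the looping lemma at that corrected level is genuinely the content of the Koepke--Miller theorem and still has to be proved (or cited), not just asserted to follow from admissibility.
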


\begin{thm}{\label{haltingtimes}}
A program $P$ using $n$ registers using oracle $x$ either halts after at most $\omega_{n+1}^{CK,x}$ many steps or does not halt at all. 
\end{thm}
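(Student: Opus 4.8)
The plan is to reduce the statement to a combinatorial ``looping'' phenomenon and then to bound, by induction on the number $n$ of registers, the stage by which such a loop must already have occurred.

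First I would recall the standard loop criterion for infinite time register machines. Writing the configuration of $P$ at stage $\iota$ as $c_\iota = (Z_\iota; R_{1\iota},\dots,R_{n\iota})\in\{1,\dots,k\}\times\omega^n$, where $k$ is the number of program lines, I call a pair of stages $\beta<\gamma$ a \emph{repeat pair} for the computation of $P^x$ if $c_\beta = c_\gamma$, if $P^x$ has not halted by stage $\gamma$, and if $R_{i\iota}\ge R_{i\beta}$ for every register $i$ and every $\iota\in[\beta,\gamma)$. A routine transfinite induction then shows that the computation is periodic with period $\gamma-\beta$ from stage $\beta$ onwards: successor stages are immediate, and at a limit stage the inequality $R_{i\iota}\ge R_{i\beta}$ together with the cofinal recurrence of the value $R_{i\beta}$ (at the stages $\beta+(\gamma-\beta)\cdot\eta$) forces the $\liminf$-rule to reproduce $c_\beta$ exactly. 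Hence the existence of a repeat pair implies that $P^x$ never halts, and it suffices to prove: if $P$ uses $n$ registers and $P^x$ has not halted by stage $\omega_{n+1}^{CK,x}$, then a repeat pair occurs at stages $\le\omega_{n+1}^{CK,x}$.

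I would prove this by induction on $n$, the small cases being easy (with no registers the program performs a finite, branch-free computation, since every jump command tests a register for being zero). For the step from $n$ to $n+1$, run $P^x$ to stage $\alpha := \omega_{n+2}^{CK,x}$ assuming it has not halted, single out one register $R_{n+1}$, and set $m := \liminf_{\iota<\alpha}R_{n+1,\iota}\in\omega$. There is a stage $\sigma<\alpha$ beyond which $R_{n+1}$ never drops below $m$ and assumes the value $m$ cofinally in $\alpha$; on the cofinal set $S$ of stages in $[\sigma,\alpha)$ at which $R_{n+1}=m$, the machine behaves, modulo this one register, like an $n$-register machine, so the induction hypothesis yields a repeat pair for that reduced behaviour below $\omega_{n+1}^{CK,x}$, which --- combined with the cofinal recurrence of $m$ and the $\liminf$-rule for the remaining registers --- must then be upgraded to a genuine repeat pair for $P^x$ below $\alpha=\omega_{n+2}^{CK,x}$. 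The passage to the next admissible level is what supplies both the room to locate the stabilisation stage $\sigma$ and the $\Sigma_1$-closure (collection inside $L_\alpha[x]$) needed to perform this upgrade.

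The hard part will be precisely this register-counting step: making rigorous the sense in which ``freezing'' $R_{n+1}$ at the value $m$ produces an honest $n$-register computation to which the induction hypothesis genuinely applies, and controlling how the possibly many downward jumps of the $\liminf$ of $R_{n+1}$ accumulate, so that passing from $n$ to $n+1$ registers consumes exactly one further admissible ordinal and not $\omega$ of them. This is where the fine-structural closure properties of the ordinals $\omega^{CK,x}_\iota$ are used in earnest, and where I would lean most heavily on the arguments already developed in \cite{KoeMi} and \cite{ITRM}.
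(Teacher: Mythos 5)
Note first that the paper does not prove this theorem at all: it is recalled as a known result, imported from \cite{KoeMi} and \cite{ITRM}, so there is no in-paper argument to match; your sketch follows the general strategy of the literature (loop criterion plus an induction trading one register for one admissible), but it has two genuine gaps. The first is that your loop criterion is false as stated. You only demand that no register drops below its stage-$\beta$ value on $[\beta,\gamma)$, but the $\liminf$ rule also governs the program counter: at the stage $\beta+(\gamma-\beta)\cdot\omega$ the active line becomes the \emph{least} line visited during a period, which need not be $Z_\beta$. Concretely, if some low-indexed line containing a conditional jump to a halting instruction is traversed during the period only while a certain register is positive, while that register equals $0$ at $\beta$, then all your hypotheses can hold and yet the machine halts at the limit of the periods; so exact periodicity, and with it the non-halting conclusion, fails. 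The correct criterion (as in \cite{KoeMi}) requires the configuration at $\beta$ to be a componentwise lower bound --- program line included --- for all configurations in $[\beta,\gamma)$, equivalently to be the pointwise $\liminf$ over the interval; with that strengthening your transfinite induction does go through, but then the existence of such a pair below $\omega_{n+1}^{CK,x}$ must be established for the strengthened notion.

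The second and more serious gap is the induction step, which as written restates the difficulty rather than resolving it. The ``reduced behaviour'' on the cofinal set $S$ of stages with $R_{n+1}=m$ is not an $n$-register $ITRM$ computation: the passage between consecutive elements of $S$ is an excursion of arbitrary ordinal length in which the remaining registers are transformed in a way not generated by any fixed $n$-register program, so the induction hypothesis, which speaks about actual $n$-register programs, does not apply to it; nor is it clear what ``a repeat pair below $\omega_{n+1}^{CK,x}$'' means for an object indexed along a set cofinal in $\omega_{n+2}^{CK,x}$ (one needs admissibility relative to the computation history above the stabilisation stage $\sigma$, not just relative to $x$). Moreover, even granted a repeat pair for the reduced behaviour, the proposed ``upgrade'' needs every register and the program counter to stay above their values at the first repeat stage at \emph{all} intermediate stages, including those inside the excursions, which the reduced data does not control. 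You flag exactly this as ``the hard part'' and propose to lean on \cite{KoeMi} and \cite{ITRM} --- but those are precisely the sources in which the theorem is proved, so as a self-contained argument the proposal assumes what it is meant to establish; since the paper itself simply imports the result, citing those proofs outright is the honest alternative, and completing your sketch would require reproducing their register-counting analysis in full.
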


\begin{thm}{\label{haltingproblem}}
Let $n\in\omega$ and let $(P_{i}|i\in\omega)$ be a canonical enumeration of the $ITRM$-programs using at most $n$ registers. Then there is an $ITRM$-program $H_{n}$ such that, for all $x\subseteq\omega$,
$H_{n}^{x}(i)\downarrow=1$ iff $P_{i}^{x}\downarrow$, and $H_{n}^{x}(i)\downarrow=1$, otherwise.
\end{thm}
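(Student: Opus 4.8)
The plan is to realize $H_{n}$ as an $ITRM$-program that, from oracle $x$, decides membership in the \emph{bounded halting set} $h_{n}(x):=\{\,i\in\omega : P_{i}^{x}\!\downarrow\,\}$, and to obtain such a program via Theorem~\ref{compstrength1} by showing that $h_{n}(x)$ sits at a fixed finite level of the constructible hierarchy relativized to $x$. Concretely, I would prove $h_{n}(x)\in L_{\omega_{n+2}^{CK,x}}[x]$ for every $x\subseteq\omega$, via a definition uniform in $x$; then Theorem~\ref{compstrength1}, applied with parameter $n+2$, yields a $g(n+2)$-register $ITRM$-program computing the real $h_{n}(x)$ from the oracle $x$, and (up to trivial adjustment of input/output conventions) this program is the desired $H_{n}$.

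To locate $h_{n}(x)$, first invoke Theorem~\ref{haltingtimes}: a program using at most $n$ registers halts in oracle $x$ if and only if it halts within $\omega_{n+1}^{CK,x}$ steps. Next observe that the $ITRM$-computation function --- sending $(i,\alpha)$ to the configuration (active program line together with the finitely many register contents) of $P_{i}^{x}$ after $\alpha$ steps --- is defined by a straightforward $\Sigma_{1}$-recursion along the ordinals of $L[x]$: successor steps follow the transition rule of the current command, and limit steps are governed by the $\liminf$ rule recalled in Section~1. Since $\omega_{n+1}^{CK,x}$ is admissible relative to $x$, any initial segment of this computation of length $\alpha<\omega_{n+1}^{CK,x}$ is an element of $L_{\omega_{n+1}^{CK,x}}[x]$. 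Combining this with Theorem~\ref{haltingtimes} gives, for every $i$,
\[
P_{i}^{x}\!\downarrow \quad\Longleftrightarrow\quad L_{\omega_{n+1}^{CK,x}}[x]\models \exists c\,\bigl(c \text{ codes a legal run of } P_{i}^{x} \text{ ending in a halting configuration}\bigr),
\]
which is a $\Sigma_{1}$ condition over $L_{\omega_{n+1}^{CK,x}}[x]$ with only $x$ and $i$ as parameters. Hence $h_{n}(x)$ is definable over $L_{\omega_{n+1}^{CK,x}}[x]$, so $h_{n}(x)\in L_{\omega_{n+1}^{CK,x}+1}[x]\subseteq L_{\omega_{n+2}^{CK,x}}[x]$, as required.

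With $h_{n}(x)\in L_{\omega_{n+2}^{CK,x}}[x]$ established, Theorem~\ref{compstrength1} (with $n$ replaced by $n+2$) provides an $ITRM$-program $H_{n}$ using $g(n+2)$ registers such that $H_{n}^{x}$ on input $i$ halts with output $1$ if $i\in h_{n}(x)$, i.e.\ if $P_{i}^{x}\!\downarrow$, and with output $0$ otherwise; since the construction underlying Theorem~\ref{compstrength1} is uniform and the $\Sigma_{1}$-definition of $h_{n}(x)$ does not depend on $x$, one and the same $H_{n}$ works for all $x$, which is exactly the statement. The one point that merits care, rather than mere citation, is the verification that the $ITRM$-computation predicate is genuinely uniformly $\Sigma_{1}$ over the levels of $L[x]$ and that ``being a halting configuration'' is decidable inside the model --- equivalently, that initial segments of $ITRM$-computations of length below $\omega_{n+1}^{CK,x}$ really do appear below that ordinal. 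Both follow routinely from the definition of $ITRM$-computations in Section~1 together with the admissibility of $\omega_{n+1}^{CK,x}$, so I expect only bookkeeping here and no genuine obstacle. (A more hands-on alternative would simulate $P_{i}^{x}$ directly while searching, before stage $\omega_{n+1}^{CK,x}$, for a ``stable'' recurrence of configurations certifying non-halting, but the fine-structural route above is shorter and reuses the results already available.)
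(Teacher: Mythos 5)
Your localization of the halting set is fine: by Theorem~\ref{haltingtimes} and admissibility of $\omega_{n+1}^{CK,x}$, the set $h_{n}(x)=\{i:P_{i}^{x}\downarrow\}$ is indeed $\Sigma_{1}$-definable over $L_{\omega_{n+1}^{CK,x}}[x]$ by a formula not depending on $x$, hence lies in $L_{\omega_{n+2}^{CK,x}}[x]$ (the only caveat, easily repaired, is the boundary case of halting exactly at time $\omega_{n+1}^{CK,x}$, which the paper's ``at most'' allows). Note, for orientation, that the paper itself gives no proof of this theorem: it is imported from Koepke--Miller, so there is no argument in the paper to match against.

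The genuine gap is the step from ``$h_{n}(x)\in L_{\omega_{n+2}^{CK,x}}[x]$'' to the existence of a \emph{single} program $H_{n}$ working for \emph{all} oracles $x$. Theorem~\ref{compstrength1} as stated is a pointwise existence claim: for each pair $(z,y)$ with $z\in L_{\omega_{m}^{CK,y}}[y]$ there is \emph{some} program computing $z$ from $y$, and that program may depend on both $z$ and $y$. Applying it oracle by oracle gives a family $(Q_{x})_{x\subseteq\omega}$ with $Q_{x}^{x}$ computing $h_{n}(x)$, and since there are continuum many oracles but only countably many programs this does not yield one program correct on every oracle. You flag this and answer it by asserting that ``the construction underlying Theorem~\ref{compstrength1} is uniform,'' but that uniform, lightface version (one program that, from any oracle $x$, evaluates a fixed $\Sigma_{1}$ formula over $L_{\omega_{n+1}^{CK,x}}[x]$) is exactly the nontrivial content of the theorem you are trying to prove: it requires uniformly accessing levels of $L[x]$ above $\omega_{1}^{CK,x}$, which cannot be indexed by $x$-recursive well-orderings and is handled in Koepke--Miller by iterated coding and well-foundedness tests. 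Worse, in the standard development the logical order is reversed: the halting-problem solvers for machines with fewer registers are used (together with WO-checking) to prove the lower bound of the computational-strength theorem, so deriving Theorem~\ref{haltingproblem} from a beefed-up Theorem~\ref{compstrength1} risks circularity unless you independently establish the uniform truth-evaluation machinery. The missing idea is the actual mechanism of the Koepke--Miller proof --- simulation of $P_{i}^{x}$ combined with loop detection / induction on the number of registers --- and it is not recoverable by ``bookkeeping'' from the statements of Theorems~\ref{compstrength1} and~\ref{haltingtimes} alone.
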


\begin{defini}
We denote by $COMP$ the set of computable reals and by $RECOG$ the set of recognizable reals.
\end{defini}

\begin{lemma}
$COMP\subseteq RECOG$
\end{lemma}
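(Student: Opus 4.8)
The plan is to reduce recognizing $x$ to the halting problem, which $ITRM$s solve by Theorem~\ref{haltingproblem}. The obvious approach --- given an oracle $y$, compute the bits of $x$ one by one (using the program that computes $x$) and compare them with $y$ --- detects $y\neq x$ after finitely many comparisons, but runs forever exactly when $y=x$; since a recognizer must halt with output $1$ in that case, this naive search has to be turned into a halting computation.

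First I would fix an $ITRM$-program $P$ witnessing $x\in COMP$, so that $P$ (with the empty oracle) halts on every input with $P(n)\downarrow=1$ iff $n\in x$ and $P(n)\downarrow=0$ otherwise; recall that $P$'s oracle queries are then always answered negatively. Using standard subroutine simulation, I would build an $ITRM$-program $R$ which, on oracle $y$, runs a counter $c$ through $0,1,2,\dots$ and, in round $c$, computes $x(c)$ by simulating $P$ on input $c$ (redirecting $P$'s oracle queries to the answer ``not in the oracle''), reads $y(c)$ from the oracle, and executes a halt instruction precisely if these two bits differ. If $y\neq x$, there is a least $c_{0}$ with $x(c_{0})\neq y(c_{0})$; rounds $0,\dots,c_{0}$ each terminate, since $P$ halts on every input, and there are only finitely many of them, so $R^{y}\downarrow$. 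If $y=x$, then no round ever reaches the halt instruction, so $R^{y}$ does not halt, whatever the counter does at limit stages. Hence $R^{y}\downarrow$ iff $y\neq x$. Let $m$ be the fixed, finite number of registers used by $R$.

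Now I would apply Theorem~\ref{haltingproblem} to this $m$ to obtain $H_{m}$, and let $i^{\ast}$ be the index of $R$ in the enumeration $(P_{i}|i\in\omega)$ of $m$-register programs. The recognizer $Q$ is: on oracle $y$, simulate $H_{m}^{y}(i^{\ast})$; output $0$ if its value is $1$, meaning $R^{y}\downarrow$ and hence $y\neq x$, and output $1$ if its value is $0$. Since $H_{m}$ halts on every oracle and input, $Q^{y}\downarrow$ for all $y$, with $Q^{y}\downarrow=1$ iff $y=x$ and $Q^{y}\downarrow=0$ iff $y\neq x$; hence $x\in RECOG$. The only delicate points are bookkeeping ones --- simulating $P$ inside $R$, and $R$ and $H_{m}$ inside $Q$, with a fixed finite register overhead, and redirecting $P$'s oracle queries --- and I expect the reduction to Theorem~\ref{haltingproblem} to be the step a reader would single out as doing the real work.
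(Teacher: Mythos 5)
Your proof is correct, but it takes a different route from the paper. The paper's own argument is a direct construction: it runs the computing program $P$ on each input $i$, compares the output with the $i$-th oracle bit, rejects on any mismatch, and detects the successful completion of all $\omega$ comparisons by means of two ``flag'' registers that are flashed after each round --- one initialized to $1$ and dipped to $0$, the other initialized to $0$ and raised to $1$ --- so that, by the $\liminf$ limit rule, their contents first agree exactly at a limit stage after infinitely many successful comparisons, at which point the machine accepts. You instead build a program $R^{y}$ that halts iff $y\neq x$ and then invert its halting behaviour by invoking the bounded halting-problem solver $H_{m}$ of Theorem~\ref{haltingproblem}. Both arguments are sound (your handling of the two subtleties --- redirecting $P$'s oracle queries to the empty oracle, and the fact that $R^{x}$ loops forever regardless of what the counter does at limits --- is right). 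The trade-off: your reduction is conceptually cleaner and needs no direct appeal to the limit rule, but it imports Theorem~\ref{haltingproblem}, which is one of the deeper facts about $ITRM$s, and it inflates the register count, since $H_{m}$ requires strictly more than $m$ registers. The paper's construction is self-contained, uses only the $\liminf$ rule, and adds just two registers to those of $P$ --- a point that matters later when the paper studies $RECOG_{n}$, the reals recognizable with a bounded number of registers.
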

\begin{proof}
Let $x$ be computable, and suppose that $P$ is a program that computes $x$. Then we can recognize $x$ as follows: Let $R_{1}$ and $R_{2}$ be two extra registers, which we call the 'flag registers'. Let $R_{1,0}=1$ and $R_{2,0}=0$. 
For each $i\in\omega$, compute $P$ on input $i$, then compare the output to the $i$-th bit of the oracle. If the results differ, return 'no'.
If the results match, set $R_{1}$ to $0$ and then back to $1$ again and set $R_{2}$ to $1$ and then back to $0$. If the contents of $R_{1}$ and $R_{2}$ are equal at the beginning of such a step,
stop with output 'yes'. This works because the contents of the flag registers can only agree when they have both been flashed a limit number of times, which means that all bits of oracle have been positively checked.
\end{proof}

\begin{defini}
For $\alpha\in On$, $i\in\omega+1$, $\rho_{i}^{\alpha}$ denotes the $\Sigma_{i}$-projectum of $L_{\alpha}$, i.e. the smallest ordinal $\rho$ such that there is some $x\subset\rho$ with $x\in\Sigma_{i}(L_{\alpha})-L_{\alpha}$. $\rho_{\omega}^{\alpha}$ denotes
the ultimate projectum of $L_{\alpha}$, i.e. the minimal element of $\{\rho_{i}^{\alpha}|i\in\omega\}$.
\end{defini}

\begin{lemma}{\label{finestructure}}
Suppose that $\alpha$ is an ordinal such that, for some set $S$ of $\in$-formulas, $\alpha$ is minimal with the property that $L_{\alpha}\models S$. Then $\rho_{\omega}^{\alpha+i}=\omega$ for some $i\in\{0,1\}$.
\end{lemma}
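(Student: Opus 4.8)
The plan is to use the minimality of $\alpha$ to see that $L_{\alpha}$ is pointwise definable, and then to split into two cases according to whether a definable surjection of $\omega$ onto $L_{\alpha}$ is already available over $L_{\alpha}$ (yielding $i=0$) or only over $L_{\alpha+1}$ (yielding $i=1$). For the first point, let $X=\dcl^{L_{\alpha}}(\emptyset)$ be the Skolem hull of $\emptyset$ in $L_{\alpha}$; since $L_{\alpha}$ has the definable well-order $<_{L}$ for picking canonical witnesses, $X\prec L_{\alpha}$. By condensation its transitive collapse is $L_{\beta}$ for some $\beta\le\alpha$ (I take $\alpha$ to be a limit, so that $L_{\alpha}\models V=L$; this is the only case relevant below, the others being immediate). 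Transporting $S$ along the collapse gives $L_{\beta}\models S$, since every $\sigma\in S$ holds in $L_{\alpha}$, hence in $X$, hence in $L_{\beta}$. By minimality $\beta\ge\alpha$, so $\beta=\alpha$ and $X=L_{\alpha}$: every element of $L_{\alpha}$ is definable in $L_{\alpha}$ without parameters.

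Now fix, for each $n$, the canonical $\Sigma_{n}$-Skolem function $h_{n}$ of $L_{\alpha}$, and put $H_{n}=\{h_{n}(k):k\in\omega\}$; each $H_{n}$ is $\Sigma_{n}$-definable over $L_{\alpha}$, and by pointwise definability $\bigcup_{n}H_{n}=L_{\alpha}$. Suppose first that $H_{n}=L_{\alpha}$ for some $n$. Then $h_{n}\restriction\omega$ is a $\Sigma_{n}$-definable partial surjection of $\omega$ onto $L_{\alpha}$, so the Cantor diagonal $D=\{k\in\omega: h_{n}(k)\text{ defined},\ h_{n}(k)\subseteq\omega,\ k\notin h_{n}(k)\}$ is again $\Sigma_{n}$-definable over $L_{\alpha}$ and cannot lie in $L_{\alpha}$ (otherwise $D=h_{n}(k_{0})$ for some $k_{0}$, giving the usual contradiction $k_{0}\in D\leftrightarrow k_{0}\notin D$). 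Hence $\rho_{n}^{\alpha}=\omega$, so $\rho_{\omega}^{\alpha}=\omega$ and $i=0$ works.

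Otherwise $H_{n}\subsetneq L_{\alpha}$ for all $n$, and I would pass to $L_{\alpha+1}$. Here $L_{\alpha}$ is an element of $L_{\alpha+1}$, definable over it (as the union of the levels $L_{\xi}$ with $\xi$ ranging over the ordinals, which inside $L_{\alpha+1}$ are exactly the $\xi\le\alpha$), and $L_{\alpha+1}$ decides $L_{\alpha}$-satisfaction with bounded complexity: for each code $\ulcorner\varphi\urcorner$ the Tarskian satisfaction table of the finitely many subformulas of $\varphi$ over $L_{\alpha}$ is first-order definable over $L_{\alpha}$, hence an element of $\mathrm{Def}(L_{\alpha})=L_{\alpha+1}$, so ``$L_{\alpha}\models\varphi(\vec a)$'' is uniformly $\Sigma_{1}$ over $(L_{\alpha+1},L_{\alpha})$ and thus, say, $\Sigma_{2}$ over $L_{\alpha+1}$. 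Using pointwise definability of $L_{\alpha}$, the partial map $g$ sending $\ulcorner\varphi\urcorner$ to the unique $a$ with $L_{\alpha}\models\varphi(a)$, when such exists, is a $\Sigma_{2}$-definable partial surjection of $\omega$ onto $L_{\alpha}$, and then $G(p(\ulcorner\psi\urcorner,j))=\{a\in L_{\alpha}:L_{\alpha}\models\psi(a,g(j))\}$ is a $\Sigma_{2}$-definable partial surjection of $\omega$ onto all of $L_{\alpha+1}$. Diagonalizing against $G$ as before produces a $\Sigma_{2}$-definable subset of $\omega$ not lying in $L_{\alpha+1}$, so $\rho_{\omega}^{\alpha+1}=\omega$ and $i=1$ works.

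The main obstacle is the bookkeeping in the last step: checking that the satisfaction relation of $L_{\alpha}$ has bounded $\Sigma$-complexity over $L_{\alpha+1}$ \emph{uniformly} in the formula and its parameters, and that the diagonal set really escapes $L_{\alpha+1}$ --- for which one needs $G$ to be onto $L_{\alpha+1}$, so that $G$ hits every subset of $\omega$ that lies in $L_{\alpha+1}$, not merely onto $L_{\alpha}$. Everything else is the standard ``least models are pointwise definable'' argument together with Cantor diagonalization, plus the trivial observation that $\rho_{\omega}^{\gamma}\ge\omega$ whenever $\gamma\ge\omega$ (no subset of a finite ordinal is new), which turns the inequalities $\rho\le\omega$ into the equalities claimed.
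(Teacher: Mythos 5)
Your proof is correct and follows essentially the same route as the paper's: minimality plus condensation makes $L_{\alpha}$ pointwise definable, and then a definable surjection of $\omega$ onto the structure is Cantor-diagonalized. The only divergence is in the final step: the paper diagonalizes against a surjection of $\omega$ onto $L_{\alpha}$ definable over $L_{\alpha+1}$ and settles for the disjunction ``$\rho_{\omega}^{\alpha}=\omega$ or $\rho_{\omega}^{\alpha+1}=\omega$'' according to whether the diagonal set happens to lie in $L_{\alpha+1}$, whereas your Case 2 (which, as you may notice, never uses its case hypothesis and so proves the lemma outright with $i=1$) builds a bounded-complexity surjection onto all of $L_{\alpha+1}$ and thereby pins down $\rho_{\omega}^{\alpha+1}=\omega$ --- a slightly sharper conclusion at the cost of the satisfaction-predicate bookkeeping you flag.
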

\begin{proof}
For $X\subseteq L_{\alpha}$, let $\Sigma_{\omega}^{L_{\alpha}}\{X\}$ be the elementary hull of $X$ in $L_{\alpha}$. Hence $\Sigma_{\omega}^{L_{\alpha}}\{X\}$ is the hull of $X$ under all $\Sigma_{i}$-Skolem functions for $L_{\alpha}$. Denote by $h_{i}^{L_{\alpha}}$
the $\Sigma_{i}$-Skolem function for $L_{\alpha}$. Consider $H:=\Sigma_{\omega}^{L_{\alpha}}\{\emptyset\}$. By condensation, there is $\beta\leq\alpha$ such that $\sigma:H\equiv L_{\beta}$. By elementarity, $L_{\beta}\models S$. By minimality, $\beta=\alpha$
and hence indeed $\sigma=id$. The $\Sigma_{\omega}$-Skolem function $h_{\omega}^{L_{\alpha}}$ is definable over $L_{\alpha+1}$. Now $h_{\omega}^{L_{\alpha}}[\omega]=L_{\alpha}$. Hence, over $L_{\alpha+1}$, we can define a surjection $f$ from $\omega$ onto 
$L_{\alpha}$. Now consider $Y:=\{i\in\omega|i\notin f(i)\}$. If $Y\in L_{\alpha}$, then there is $j\in\omega$ with $f(j)=Y$ and hence we have $j\in f(j)\leftrightarrow j\notin Y\leftrightarrow j\notin f(j)$, a contradiction. Hence $Y\notin L_{\alpha}$.
But $Y$ is obviously definable over $L_{\alpha+1}$, hence $Y\in L_{\alpha+2}-L_{\alpha}$. So at least one of $\rho_{\omega}^{L_{\alpha}}$ and $\rho_{\omega}^{L_{\alpha+1}}$ must drop to $\omega$.
\end{proof}

\begin{lemma} Let $P$ be an $ITRM$-program. Denote by $comp(P)$ the computation of $P$, i.e. the sequence of program states when carrying out $P$ as defined in \cite{KoeMi}.
\begin{itemize}
\item[i] Let $\alpha\in On$. Denote by $comp(P)\upharpoonright\alpha$ the computation associated with $P$ restricted to the first $\alpha$ many steps. Let $\alpha<\beta,\gamma$ be ordinals. 
Then $(comp(P)\upharpoonright\alpha)^{L_{\beta}}=(comp(P)\upharpoonright\alpha)^{L_{\gamma}}$.
\item[ii] Let $\alpha>\beta\geq\omega_{\omega}^{CK}$. Then $(comp(P))^{L_{\alpha}}=(comp(P))^{L_{\beta}}$.
\item[iii] Let $M_{1}$ and $M_{2}$ be transitive models of $ZF^{-}$. Then $(comp(P))^{M_{1}}=(comp(P))^{M_{2}}$.
\end{itemize}
\end{lemma}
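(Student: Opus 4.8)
The plan is to reduce all three items to the absoluteness of the step-by-step recursion defining $comp(P)$: I would prove (i) directly by induction on the length of the computation, and then obtain (ii) and (iii) from (i) together with the halting-time bound of Theorem~\ref{haltingtimes}.

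For (i) I would induct on $\xi\leq\alpha$ and show that the program state at time $\xi$ is computed identically whether the recursion is run inside $L_\beta$ or inside $L_\gamma$. The successor case is immediate: passing from time $\xi$ to time $\xi+1$ only involves reading off the active program line and executing one of the finitely many register operations, i.e.\ elementary arithmetic on natural numbers, which is absolute between any transitive structures. For a limit ordinal $\lambda\leq\alpha$ the new register contents are $R_{i\lambda}=\liminf_{\iota<\lambda}R_{i\iota}$ when this limit inferior exists in $\omega$ and $0$ otherwise, and the active line is $\liminf_{\iota<\lambda}Z_\iota$; here I would use that both ``$n=\liminf_{\iota<\lambda}f(\iota)$'' and ``$\{f(\iota)\mid\iota<\lambda\}$ is unbounded in $\omega$'' are arithmetical in the sequence $f\upharpoonright\lambda$, hence absolute between transitive models containing $\lambda$ and the partial computation below $\lambda$. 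Since $\alpha<\beta,\gamma$, every ordinal entering the length-$\alpha$ recursion lies in both $L_\beta$ and $L_\gamma$, so the induction closes and the two computations agree (as subclasses of $L_\beta$ and $L_\gamma$ respectively).

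For (ii) I would fix the number $n$ of registers of $P$ and apply Theorem~\ref{haltingtimes}: $P$ halts by stage $\omega_{n+1}^{CK}$ or not at all, and in the non-halting case the loop analysis underlying that theorem witnesses the divergence already below $\omega_{n+1}^{CK}$; so with the convention of \cite{KoeMi} the object $comp(P)$ is in both cases constructed completely by some stage $\mu<\omega_{n+1}^{CK}\leq\omega_\omega^{CK}$. Hence for every $\delta\geq\omega_\omega^{CK}$ the value $(comp(P))^{L_\delta}$ is entirely determined by $(comp(P)\upharpoonright\mu)^{L_\delta}$, and by (i) this does not depend on $\delta$; taking $\delta=\alpha$ and $\delta=\beta$ gives (ii).

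For (iii) the extra ingredient I would establish first is that every transitive $M\models ZF^{-}$ satisfies $\omega_\omega^{CK}<On\cap M$: since $ZF^{-}$ proves that admissible ordinals are unbounded (e.g.\ because regular cardinals are unbounded in $ZF^{-}$ and $L_\kappa$ is admissible for $\kappa$ regular uncountable), $M$ sees that $n\mapsto\omega_n^{CK}$ is total, so by Replacement in $M$ the set $\{\omega_n^{CK}\mid n\in\omega\}$ lies in $M$ and $\omega_\omega^{CK}=\sup_n\omega_n^{CK}<On\cap M$. Consequently $L_{\omega_\omega^{CK}}\subseteq L_{On\cap M}\subseteq M$, and the argument of (i), run with $M$ and its own constructible part $L_{On\cap M}$ in the roles of $L_\beta$ and $L_\gamma$ — legitimate since both are admissible and $M\supseteq L_{On\cap M}$ — yields $(comp(P))^{M}=(comp(P))^{L_{On\cap M}}$, which by (ii) equals $(comp(P))^{L_{\omega_\omega^{CK}}}$, a value independent of $M$. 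Applying this to $M_1$ and $M_2$ completes the proof. The one genuinely delicate point is the reduction in (ii): one must check that for non-halting $P$ the symbol $comp(P)$ still denotes a set whose (relativized) construction stabilizes strictly below $\omega_\omega^{CK}$, and this is exactly what Theorem~\ref{haltingtimes} and its loop analysis deliver; everything else is routine absoluteness.
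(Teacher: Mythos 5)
Your decomposition is exactly the paper's: (i) by transfinite induction on the length of the computation using absoluteness of the successor and $\liminf$ rules, (ii) from (i) plus the halting-time bound of Theorem~\ref{haltingtimes}, and (iii) from (ii) plus the fact that transitive models of $ZF^{-}$ contain $\omega_{\omega}^{CK}$. The paper states all three steps without detail; your write-up supplies the details correctly, and your remark that in the non-halting case one needs the stabilization (looping) below $\omega_{n+1}^{CK}$, not merely non-halting, is a point the paper glosses over and is worth making explicit.

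One justification you offer is wrong, though the claim it supports is right. You argue that admissibles are unbounded in a transitive $M\models ZF^{-}$ ``because regular cardinals are unbounded in $ZF^{-}$.'' That is false: $H_{\omega_{1}}$ is a transitive model of $ZF^{-}$ in which every set is countable, so it has no uncountable regular cardinals at all. The correct route is via reflection, which $ZF^{-}$ does prove: the ordinals $\beta$ with $L_{\beta}\prec_{\Sigma_{1}}L_{On\cap M}$ (computed in $M$) form a closed unbounded class, and each such limit $\beta$ is admissible; moreover ``$L_{\beta}\models KP$'' is absolute for transitive models, so $M$'s $\omega_{n}^{CK}$ is the true one and Replacement in $M$ yields $\omega_{\omega}^{CK}\in M$. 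With that repair the rest of your (iii) — comparing $M$ with $L_{On\cap M}$ and then invoking (ii) — goes through as written.
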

\begin{proof}
(i) An easy transfinite induction on $\alpha$.\\ 
(ii) This follows from (i) and the fact that $ITRM$-programs either halt in less than $\omega_{\omega}^{CK}$ many steps or do not halt at all.\\
(iii) This follows from (ii) and the fact that $ZF^{-}$ proves the existence of $\omega_{\omega}^{CK}$.
\end{proof}

The following uses a canonical way to encode countable $\in$-structures as reals. We do this by the following definition. Fix some canonical recursive bijection $p$ between $\omega$ and $\omega\times\omega$ and denote, for $a\in\omega$, by $(a)_{1}$ and $(a)_{2}$
the first and second component of $p(a)$, respectively.

\begin{defini}
Let $X\in L$ be countable, and let $E\in L$ be a binary relation on $X$. The canonical code of $(X,E)$, $cc(X,E)$, is the $<_{L}$-smallest real $r$ such that, for some bijective $f:\omega\rightarrow X$, we have
$a\in r\leftrightarrow E(f((a)_{1}),f((a)_{2})$.
\end{defini}

The following is called the Lost Melody Theorem for $ITRM$'s:\\

\begin{thm}{\label{LoMe}}
$COMP\subsetneq RECOG$, i.e. there are reals which are recognizable, but not computable.
\end{thm}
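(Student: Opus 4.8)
The plan is to exhibit a single real $r$ that is recognizable but provably not computable, using the fine structure results just assembled. The natural candidate is $r = cc(L_\alpha, \in)$ — the canonical code of an initial segment of $L$ — where $\alpha$ is chosen to be the least ordinal such that $L_\alpha$ satisfies a suitable fragment $S$ of $ZF^-$ (say, enough to carry out the constructions below and to guarantee $L_\alpha \models ``V=L"$ plus basic closure). First I would fix such an $\alpha$ and set $r := cc(L_\alpha,\in)$.

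\emph{Non-computability.} By Theorem~\ref{compstrength2}, every $ITRM$-computable real lies in $L_{\omega_1^{CK}}$. So it suffices to arrange that $\alpha > \omega_1^{CK}$, hence $r \notin L_{\omega_1^{CK}}$ (indeed $r$ codes a structure of size $\alpha \geq \omega_1^{CK}$). Since the fragment $S$ I pick will in particular imply the existence of $\omega_1^{CK}$ (and more), the least $\alpha$ with $L_\alpha \models S$ will certainly exceed $\omega_1^{CK}$, so $r$ is not computable. This step is routine.

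\emph{Recognizability.} This is the heart of the argument. I want an $ITRM$-program $P$ such that, on oracle $x$, $P^x \downarrow = 1$ iff $x = r$. The program proceeds in stages. (1) First, $P$ checks that $x$ codes a well-founded extensional relation — well-foundedness is checkable by an $ITRM$ because it can search for infinite descending sequences using its infinite running time, and by Lemma~\ref{finestructure} together with the halting-time bounds (Theorems~\ref{haltingtimes}, \ref{haltingproblem}) such searches terminate within the admissible ordinals available. (2) Next, $P$ checks that the transitive collapse of the structure coded by $x$ is of the form $(L_\beta, \in)$ for some $\beta$ and that it satisfies $S$; this is a first-order property of the coded structure, checkable by running $ITRM$-computations relative to the oracle $x$ using the computation-absoluteness lemma (clause iii) to know the answers agree with those computed inside any $ZF^-$-model. (3) Finally — and this is the subtle point — $P$ must verify \emph{minimality}: that no \emph{proper} initial segment of the coded structure already satisfies $S$, equivalently that $x$ codes exactly $L_\alpha$ and not some $L_{\alpha'}$ with $\alpha' > \alpha$. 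Here I would use that inside $L_\alpha$ (and hence decodably from $x$) one can check, for each ordinal $\gamma$ of the coded structure, whether $L_\gamma \models S$; minimality says the answer is "no" for every such $\gamma$. Because $\alpha$ was chosen minimal, any genuine initial segment fails $S$, and any oracle coding a $\neq r$ well-founded model of $S$ either fails to be an $L$-level, fails $S$, or codes a \emph{longer} hierarchy — in the last case the program detects a proper initial segment satisfying $S$ and rejects.

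\emph{Main obstacle.} The delicate part is Step (3): verifying minimality of $\alpha$ by an $ITRM$, which requires running, \emph{uniformly in the oracle}, computations that decide satisfaction of $S$ in every level of the coded hierarchy, and doing so within the $ITRM$'s register and time budget. This is exactly what Theorem~\ref{compstrength1} (relativized computability up to $\omega_n^{CK,x}$) and Lemma~\ref{finestructure} (which pins down that the relevant $\rho_\omega$ drops to $\omega$, so $L_\alpha$ — and the needed satisfaction predicates — become coded by a real close to $L_\alpha$ in $<_L$) are designed to handle; assembling these into a working program, and checking that a wrong oracle is always rejected rather than sending $P$ into a non-halting computation, is the technical crux. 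The rest — well-foundedness testing, decoding, first-order checks — is standard $ITRM$ bookkeeping.
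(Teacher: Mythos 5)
Your choice of witness ($r = cc(L_\alpha)$ for $\alpha$ least such that $L_\alpha$ models a suitable fragment of $ZF^-$) and your non-computability argument essentially match the paper's. But the recognizability argument has a genuine gap: the program you describe accepts \emph{every} real coding the minimal $L_\alpha\models S$, not just the canonical code $r$. There are many distinct reals $y$ coding $(L_\alpha,\in)$ --- one for each bijection $f:\omega\rightarrow L_\alpha$, including non-constructible ones if $V\neq L$ --- and all of them pass your tests (1)--(3): each codes a well-founded extensional structure whose transitive collapse is exactly $L_\alpha$, satisfies $S$, and has no proper level satisfying $S$. A program that accepts more than one real recognizes nothing. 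Moreover, the step you single out as the crux --- $\in$-minimality of $\alpha$, i.e.\ that no level of the coded structure already satisfies $S$ --- is actually the easy part: it is a first-order property of the coded structure, checkable with the truth-predicate machinery. The genuinely hard part, which your proposal omits entirely, is verifying that the oracle is the $<_L$-\emph{least} code of $L_\alpha$. This cannot be read off from the truth predicate of $L_\alpha$, because $r\notin L_\alpha$; it first appears in $L_{\alpha+2}$. The paper handles this by naming each element of $L_{\alpha+2}$ as a finite composition of G\"odel functions applied to elements of $L_\alpha\cup\{L_\alpha\}\cup\{L_{\alpha+1}\}$ (using $\Sigma_{\omega}^{L_{\alpha+1}}\{\{L_{\alpha}\}\}=L_{\alpha+1}$, so that everything above $L_\alpha$ is generated from data codable relative to the oracle) and evaluating ``$x$ is the $<_L$-least code of $L_\alpha$'' by recursion on such names. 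Without some device of this kind your $P$ does not satisfy the definition of recognizability.

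A secondary point: well-foundedness testing is not a consequence of Lemma~\ref{finestructure} or of the halting-time bounds; it is a separate result of Koepke--Miller (an $ITRM$ can search for infinite descending sequences in the coded relation and detect success via the limit behaviour of its registers). Lemma~\ref{finestructure} enters only to guarantee that a real coding $L_\alpha$ exists already in $L_{\alpha+2}$, i.e.\ that $r$ lives close to $L_\alpha$ in the $<_L$-ordering, which is what makes the G\"odel-function naming argument feasible.
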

\begin{proof}
$COMP\subseteq RECOG$ was shown above. It remains to see that $COMP\neq RECOG$.\\
We only sketch the construction, as we will re-use it in various modifications below. The detailed proof is quite technical and can be found in \cite{ITRM}.\\
Let $\alpha$ be minimal such that $L_{\alpha}\models ZF^{-}$, and let $x=cc(L_{\alpha})$. It is easily seen that
$\Sigma_{\omega}^{L_{\alpha+1}}\{\{L_{\alpha}\}\}=L_{\alpha+1}$, hence there is $f\in L_{\alpha+2}$ such that $f:\omega\leftrightarrow L_{\alpha}$, and hence also $x\in L_{\alpha+2}$.\\
We claim that $x$ is recognizable, but not computable.\\
That $x$ is not computable is easy to see: If it was, the computation could, by the last lemma, be carried out inside $L_{\alpha}$ with the same effect. This would allow us to define $x$ 
over $L_{\omega_{\omega}^{CK}}\in L_{\alpha}$, hence we would have $x\in L_{\alpha}$. But then we could decode $x$ inside $L_{\alpha}$, which leads $L_{\alpha}\in L_{\alpha}$, a contradiction.\\
Now we argue that $x$ is recognizable:\\
By a central result from \cite{KoeMi}, we can test whether the relation coded by a given real number $x$ is well-founded.\\
After this, it is not hard to check whether certain first-order $\in$-formulas hold in the structure coded by $x$. This is obvious for atomic formulas. It is also easy to see
that we can test for $\neg\phi$ and $\phi\wedge\psi$ if we can test for $\phi$ and $\psi$.\\
To test whether $\exists{x}\phi$ holds, we search through $\omega$ for an example, applying the checking procedure for $\phi$ in each case. Some trickery is necessary
to check the truth of formulas of arbitrary quantifier complexity by a single program, this can be found in \cite{ITRM}. Since the axioms of $ZF^{-}+V=L$ form a recursive set,
we can test whether they all hold inside the structure coded by $x$. We can also check that no element of this structure is a well-founded model of $ZF^{-}+V=L$, and that it must
hence be $\in$-minimal.\\
The hard part of the proof is checking the $<_{L}$-minimality of $x$: After all, $x$ cannot be an element of $L_{\alpha}$, hence evaluating the truth predicate for $L_{\alpha}$ will not
help much. The idea is that each element of $L_{\alpha+2}$ is constructed from elements of $X:=L_{\alpha}\cup\{L_{\alpha}\}\cup\{L_{\alpha+1}\}$ by repeated application of G\"odel functions
(see e.g. \cite{Jech}, \cite{Jen}). Hence, we can name these elements by a finite sequence of G\"odel functions and a finite sequence of codes for elements of $X$. Formulas can then be 
evaluated by recursion on the complexity of the names occuring in it. The details of this recursion can be found in \cite{ITRM}.
\end{proof}

\begin{corollary}{\label{focheck}}
Let $(\phi_{i}|i\in\omega)$ be a canonical enumeration of the $\in$-formulas. There is an $ITRM$-program $P$ such that, for all $x\subseteq\omega$, $i\in\omega$, $\vec{v}=(v_{1},...,v_{n})$ a finite sequence of natural numbers of the appropriate length
coded by a natural number $\bar{v}$, $P^{x}(i,\vec{v})\downarrow=1$ iff $x$ codes some $L_{\alpha}$ such that $\phi_{i}(x_{1},...,x_{n})$ holds in $L_{\alpha}$, where $x_{1},...,x_{n}$ are the elements coded by $v_{1},...,v_{n}$, respectively, 
and otherwise $P^{x}(i,\vec{v})\downarrow=0$. The same holds with a recursive set $S$ of formulas instead of one single formula $\phi$, where $i$ is then a code for a Turing program enumerating $S$.
\end{corollary}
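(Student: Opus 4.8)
The plan is to leverage the truth-checking machinery described in the proof of Theorem~\ref{LoMe}, now made uniform in both the formula index $i$ and the parameter tuple $\vec v$. First I would have $P$ verify that the input real $x$ is a legitimate code of a transitive $\in$-structure: using the central well-foundedness test from \cite{KoeMi}, we check that the relation coded by $x$ is well-founded and extensional, and then — by running the first-order checking procedure already available — that the coded structure satisfies enough of $ZF^-+V=L$ (or just the axioms defining an initial segment $L_\alpha$, e.g.\ via a $\Sigma_1$ axiomatization of ``$V=L$'' together with admissibility-type closure) to guarantee it is isomorphic to some $L_\alpha$. If any of these checks fails, $P$ outputs $0$. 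Note that $\alpha$ itself need not be recognizable here; we only need $x$ to code \emph{some} $L_\alpha$, so there is no $<_L$-minimality obstacle and the genuinely hard part of Theorem~\ref{LoMe} is not re-encountered.

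Next I would address the uniformity in $i$. The proof sketch of Theorem~\ref{LoMe} explains that a single $ITRM$-program can evaluate $\in$-formulas of arbitrary quantifier complexity inside the coded structure, using the ``trickery'' referenced there and spelled out in \cite{ITRM}; that construction already proceeds by recursion on formula structure, so it can be driven by the index $i$ of $\phi_i$ in the canonical enumeration, read off from an input register. Decoding $i$ into the syntax tree of $\phi_i$ is a finite (indeed primitive recursive) manipulation, well within $ITRM$-power. The free variables of $\phi_i$ are then interpreted by the elements of $L_\alpha$ coded by $v_1,\dots,v_n$; concretely, from $\bar v$ we recover the tuple $(v_1,\dots,v_n)$ via the pairing function $p$, and each $v_k$ is simply a natural number which, under the code $x$, names an element $x_k$ of the structure. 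Substituting these as the interpretations of the free variables and running the complexity-recursion yields the output bit.

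For the final sentence — replacing a single $\phi$ by a recursive set $S$ of formulas with $i$ a code for a Turing program enumerating $S$ — I would simply dovetail: given such an $i$, simulate the enumeration of $S$ (a plain recursive, hence $ITRM$-computable, task), and for each formula $\psi$ it outputs, run the procedure above to check $L_\alpha\models\psi(x_1,\dots,x_n)$, returning $0$ as soon as some $\psi$ fails. The subtlety is that $S$ may be infinite, so this loop need not terminate in finitely many $\omega$-steps; but this is exactly the situation already handled in the Lost Melody construction, where ``$L_\alpha\models ZF^-+V=L$'' is verified for the recursive set $ZF^-+V=L$, and the flag-register flashing technique from the proof of $COMP\subseteq RECOG$ lets an $ITRM$ detect that \emph{all} members of a recursively enumerated set have been checked positively. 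The main obstacle, then, is purely bookkeeping: threading the formula index (or enumeration-program index) and the coded parameter tuple through the existing quantifier-complexity recursion of \cite{ITRM} without the register count blowing up — and Theorems~\ref{compstrength1} and~\ref{compstrength2} assure us the finitely many extra registers needed for this bounded amount of additional bookkeeping are available.
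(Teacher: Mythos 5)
Your proposal is correct and follows essentially the same route as the paper, whose entire proof is the one-line observation that the corollary follows immediately from the truth-evaluation machinery built in the proof of Theorem~\ref{LoMe}; you have simply made explicit the uniformity in $i$ and $\vec v$, the dovetailing over a recursive set $S$, and the (correct) observation that the $<_L$-minimality check is not needed here. No gaps.
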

\begin{proof}
This follows immediately from the proof of Theorem \ref{LoMe}.
\end{proof}

\textbf{Remarks}: (1) For technical reasons, the proof in \cite{ITRM} uses Jensen's $J$-hierarchy instead of the $L$-hierarchy. As this distinction is irrelevant for the results in our paper,
we decided to switch to the more familiar G\"odel hierarchy instead.\\
(2) From this construction, we also get a general procedure for evaluating truth critera for elements of $L_{\alpha+i}$ for every $i\in\omega$. We could as well go further (though it is not
immediately clear how far), but this will be of no relevance for this article.\\
(3) In \cite{WITRM}, a weaker type of $ITRM$ with a modified limit rule is discussed: If $liminf_{\iota<\lambda}R_{i\iota}$ does not exist for some $\lambda$ and some register $R_{i}$, then the next computation step is undefined and the computation fails.
The computable reals for these weak $ITRM$'s are exactly those in $L_{\omega_{1}^{CK}}$. We do not know whether there are Lost Melodies for weak $ITRM$'s.

\section{Basic Results on the Distribution of Recognizable Reals}

\begin{thm}{\label{shoenfield}} 
$RECOG\subseteq L$
\end{thm}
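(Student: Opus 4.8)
The plan is to show that every recognizable real lies in $L$ by a Shoenfield-style absoluteness argument. Suppose $r$ is recognizable via the $ITRM$-program $P$, so $P^{x}\downarrow = 1$ iff $x = r$. The key observation is that the statement ``$P^{x}\downarrow = 1$'' is, by the lemma on computations, absolute between transitive models of $ZF^{-}$ containing $x$; more precisely, the computation $comp(P^{x})$ is an object that can be computed inside any such model and always has the same value. So ``$\exists x\, (P^{x}\downarrow = 1)$'' is a $\Sigma^{1}_{2}$ statement (quantify over a real $x$, then the halting assertion is arithmetic in a real coding a well-founded computation of length below $\omega^{CK,x}_{\omega}$, itself $\Sigma^{1}_{1}$-in-$x$ to verify). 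By Shoenfield absoluteness this statement is absolute between $V$ and $L$; since it is true in $V$ (witnessed by $r$), it holds in $L$, so there is some $r' \in L$ with $(P^{r'}\downarrow = 1)^{L}$.

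Next I would argue that this $L$-witness is in fact equal to $r$. The point is that $(P^{r'}\downarrow=1)^{L}$ genuinely implies $P^{r'}\downarrow = 1$ in $V$: the computation of $P$ on oracle $r'$ halts after fewer than $\omega^{CK,r'}_{\omega}$ steps (by Theorem \ref{haltingtimes}), and this ordinal and the computation up to it are absolute, so if $L$ sees the computation halt with output $1$, it really halts with output $1$. But $P$ recognizes $r$, so $P^{r'}\downarrow = 1$ forces $r' = r$. Hence $r = r' \in L$, which is what we want. Since $r$ was an arbitrary recognizable real, $RECOG \subseteq L$.

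The main obstacle is making precise the absoluteness of the computation and the complexity bookkeeping for the Shoenfield step. Concretely: one must check that ``$P^{x}\downarrow=1$'' can be written as ``there exists a real $c$ coding a countable well-founded $\in$-structure that believes it is some $L_{\alpha}$ with $\alpha$ admissible relative to $x$, and in which the formalized computation of $P$ with oracle $x$ reaches a halting line with output $1$'', with the well-foundedness clause being $\Pi^{1}_{1}$ and the rest arithmetic, giving overall $\Sigma^{1}_{2}$. The cited lemma (parts (i)--(iii)) and Theorem \ref{haltingtimes} are exactly what guarantee that such a $c$ exists whenever $P^{x}$ halts and that, conversely, the existence of such a $c$ (even one merely believed to be well-founded, once genuine well-foundedness is imposed) correctly reflects the true halting behaviour. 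Once this is set up, Shoenfield applies verbatim; alternatively one can phrase the whole thing as: $r$ is $\Delta^{1}_{2}$ (it is the unique real satisfying a $\Sigma^{1}_{2}$-expressible condition, hence also the complement is $\Sigma^{1}_{2}$), and every $\Delta^{1}_{2}$ singleton lies in $L$ by Shoenfield. I would also remark that this gives more than stated: $r \in L_{\delta}$ for $\delta$ the least ordinal such that $L_{\delta} \prec_{\Sigma_{1}} L$, or at least $r$ is $\Sigma_{1}$-definable over some countable level of $L$, which motivates the finer analysis of the distribution of $RECOG$ along $<_{L}$ carried out in the rest of the paper.
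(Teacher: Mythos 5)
Your proof is correct and follows essentially the same route as the paper: express $\exists x\,(P^{x}\downarrow=1)$ as a $\Sigma^{1}_{2}$ statement, apply Shoenfield absoluteness to obtain a witness in $L$, and use absoluteness of computations plus the uniqueness coming from recognizability to conclude that the witness is $r$ itself. The additional bookkeeping you supply (and the $\Delta^{1}_{2}$-singleton reformulation) only fleshes out what the paper leaves implicit.
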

\begin{proof}
Let $x\in RECOG$, and suppose that $P$ is an $ITRM$-program recognizing $x$. The statement $\phi(P):=\exists{y\in\mathfrak{P}(\omega)}:P^{y}\downarrow=1$ is $\Sigma_{2}^{1}$, and hence, by Shoenfield's absoluteness theorem, 
absolute for transitive models of $ZFC$. Hence $L\models\phi(P)$.
Therefore, there must be $z\in\mathfrak{P}(\omega)\cap{L}$ such that $P^{z}$ stops with output $1$ in $L$. Since computations are absolute between transitive models of $ZFC$, $P^{z}\downarrow=1$. Since $P$ recognizes $x$, this only happens for $x=z$.
Hence $x\in L$.
\end{proof}

In fact, a nonconstructible real is quite far away from being recognizable:

\begin{thm}{\label{mansfield}}
If $x\in\mathfrak{P}(\omega)-L$ and $P$ is an $ITRM$ program such that $P^{x}\downarrow=1$, then $P^{y}\downarrow=1$ holds for a perfect set of real numbers.
\end{thm}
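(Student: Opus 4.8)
\emph{Proof proposal.} The plan is to identify $A_{P}:=\{y\in\mathfrak{P}(\omega): P^{y}\downarrow=1\}$ as a $\Sigma_{2}^{1}$ set of reals with no real parameter (only the finite datum $P$) and then to invoke the Mansfield--Solovay perfect set theorem: a $\Sigma_{2}^{1}$ set of reals either contains a perfect subset or is contained in $L$. Granting this, the theorem follows at once: by hypothesis $x\in A_{P}$ and $x\notin L$, so $A_{P}\not\subseteq L$, hence $A_{P}$ has a perfect subset $C$, and every $y\in C$ satisfies $P^{y}\downarrow=1$ by the definition of $A_{P}$. So the actual work is the complexity estimate, and it is the same circle of ideas already used in Theorem \ref{shoenfield}, only pushed from ``thin implies constructible'' to the full dichotomy.

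For the complexity estimate I would argue as follows. Say $P$ uses $n$ registers. By Theorem \ref{haltingtimes}, if $P^{y}$ halts it does so in at most $\omega_{n+1}^{CK,y}$, in particular in a countable number of steps. Hence $P^{y}\downarrow=1$ holds if and only if there is a countable ordinal $\delta$ and an object $c$ coding $comp(P^{y})$ up to step $\delta$ --- a function from $\delta+1$ into the (finite) set of program states --- such that $c$ starts with the initial state, obeys the successor-step transition rules of $P$ relative to $y$, obeys the $\liminf$-rule relative to $y$ at limit steps, and at the top step $\delta$ sits at the halting line with $1$ in the output register. Encoding $c$ as a real via $p$, the clauses ``$c$ starts correctly'', ``$c$ obeys the successor rules relative to $y$'' and ``$c$ obeys the $\liminf$-rule relative to $y$'' are all arithmetic in $c$ and $y$ (the $\liminf$-clause only quantifies over stages \emph{below} a given limit, i.e. over natural numbers in the code of $c$), while the requirement that the linear order underlying $c$ be a genuine well-order is $\Pi_{1}^{1}$ in $c$. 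The well-foundedness clause together with the local rules pins $c$ down as the true computation, so this description is faithful; moreover the absoluteness of initial segments of computations between levels of $L$ (the computation lemma above) shows a witness $c$ exists in $V$ exactly when one exists in $L[y]$. Thus ``$P^{y}\downarrow=1$'' has the form $\exists c\,(\Pi_{1}^{1}$ in $c,y)$, i.e. it is $\Sigma_{2}^{1}$ in $y$, uniformly in $P$, and $A_{P}$ is $\Sigma_{2}^{1}$ with at most a natural-number parameter.

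With $A_{P}$ shown to be $\Sigma_{2}^{1}$ without real parameters, Mansfield--Solovay yields the dichotomy: either $A_{P}$ has a perfect subset or $A_{P}\subseteq L$. Since $x\in A_{P}\setminus L$ the second alternative is excluded, so $A_{P}$ contains a perfect set, which is the assertion of the theorem. (As a sanity check, the case where $A_{P}$ is thin reduces exactly to Theorem \ref{shoenfield}: if $P$ recognizes $x$ then $A_{P}=\{x\}$, which is $\Sigma_{2}^{1}$ and thin, hence $\subseteq L$.)

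The main obstacle is the complexity bookkeeping in the middle paragraph: one must confine the quantifier over computation lengths to the countable ordinals --- which is precisely Theorem \ref{haltingtimes} --- and check that the $\liminf$-limit rule contributes only an arithmetic condition rather than raising the overall complexity; the bounded-register hypothesis is what makes the halting-time bound $\omega_{n+1}^{CK,y}$ available. Once $\Sigma_{2}^{1}$-ness is in hand, the rest is a black-box application of the classical perfect set theorem for $\Sigma_{2}^{1}$ sets, in the same spirit as the appeal to Shoenfield absoluteness in the preceding proof.
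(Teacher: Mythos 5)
Your proposal is correct and follows essentially the same route as the paper: identify $A_{P}=\{y:P^{y}\downarrow=1\}$ as a $\Sigma_{2}^{1}$ set with only constructible (indeed finite) parameters and apply the Mansfield--Solovay perfect set theorem, noting that $A_{P}\not\subseteq L$ because $x\in A_{P}\setminus L$. The only difference is that the paper declares the $\Sigma_{2}^{1}$ complexity of ``$P^{y}\downarrow=1$'' to be obvious, whereas you carry out the computation-coding bookkeeping explicitly; the substance of the argument is identical.
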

\begin{proof}
By the Mansfield-Solovay Theorem (see e.g. \cite{Mi}), a $\tilde{\Sigma}_{2}^{1}$-set with constructible parameters $A\subseteq\omega^{\omega}$ which contains some $r\in\mathfrak{P}(\omega)-L$ contains a perfect set as a subset. Given an $ITRM$-program $P$,
the statement that $P^{x}$ stops with output $1$ is obviously $\tilde{\Sigma}_{2}^{1}$. Hence, if an $ITRM$-program $P$ stops with output $1$ on a certain non-constructible oracle $r$, it will do so on all reals in a perfect set. This contradicts the definition
of recognizability.
\end{proof}

Hence the computable reals are a proper subset of the recognizable reals, which in turn are a proper subset of the constructible reals. We now turn our attention to the question where recognziable reals appear
in the $L$-hierarchy.

\begin{lemma}
 Let $L_{\alpha}\models ZF^{-}$, $x\in L_{\alpha}$ such that $L_{\alpha}\models \neg{RECOG(x)}$. Then $\neg{RECOG(x)}$.
\end{lemma}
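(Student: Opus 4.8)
The plan is to prove the contrapositive: if $RECOG(x)$ holds in $V$, then $L_{\alpha}\models RECOG(x)$. So I would start by fixing an $ITRM$-program $P$ witnessing that $x$ is recognizable in $V$, say using $n$ registers, and then argue that $L_{\alpha}$ also regards $P$ as a recognizing program for $x$.

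The engine of the argument is absoluteness of oracle $ITRM$-computations between $L_{\alpha}$ and $V$. Fix an arbitrary $y\in\mathfrak{P}(\omega)\cap L_{\alpha}$. By the oracle-relativized form of Theorem \ref{haltingtimes}, the computation $P^{y}$ either halts by stage $\omega_{n+1}^{CK,y}$ or never halts at all; and since $L_{\alpha}\models ZF^{-}$, the ordinal $\omega_{n+1}^{CK,y}$ is an element of $L_{\alpha}$, so the relevant initial segment of the computation is formed inside $L_{\alpha}$. Because the step-by-step recursion defining $comp(P^{y})$ is absolute (this is the oracle-relativized version of part (i) of the computation lemma, together with the fact that the whole computation is coded by its restriction to $\omega_{n+1}^{CK,y}$ many steps), $L_{\alpha}$ and $V$ produce the very same sequence of program states on oracle $y$. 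In particular, for every $j\in\omega$ we have $\bigl(P^{y}\!\downarrow=j\bigr)^{L_{\alpha}}$ iff $\bigl(P^{y}\!\downarrow=j\bigr)^{V}$.

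Now I would combine this with the hypothesis that $P$ recognizes $x$ in $V$ and with $x\in L_{\alpha}$. For the oracle $y=x$ we have $P^{x}\!\downarrow=1$ in $V$, hence in $L_{\alpha}$; and for any $y\in\mathfrak{P}(\omega)\cap L_{\alpha}$ with $y\neq x$ we have $P^{y}\!\downarrow=0$ in $V$, hence in $L_{\alpha}$. Therefore $L_{\alpha}$ verifies that, for every one of its reals $y$, $P^{y}\!\downarrow=1$ iff $y=x$ and $P^{y}\!\downarrow=0$ otherwise; that is, $L_{\alpha}\models RECOG(x)$, contradicting the assumption $L_{\alpha}\models\neg RECOG(x)$. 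Hence no such $P$ exists and $\neg RECOG(x)$.

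The one point that needs care — and the only real obstacle — is the passage from the computation-absoluteness lemma as stated (no oracle, and phrased between two transitive set models of $ZF^{-}$) to what is actually used here: absoluteness between the set model $L_{\alpha}$ and $V$ for computations with an oracle $y\in L_{\alpha}$. This is handled in exactly the same spirit as the lemma: the halting-time bound $\omega_{n+1}^{CK,y}$ lies in $L_{\alpha}$ precisely because $L_{\alpha}\models ZF^{-}$ (mere admissibility of $\alpha$ would not suffice, e.g. $\alpha=\omega_{1}^{CK}$), the computation is determined by its restriction to that many steps, and that restriction is built by an absolute transfinite recursion, so $L_{\alpha}$ cannot disagree with $V$ about it.
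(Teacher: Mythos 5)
Your proof is correct and rests on exactly the same two ingredients as the paper's: the halting-time bound $\omega_{n+1}^{CK,y}<\alpha$ (available since $L_{\alpha}\models ZF^{-}$) and the absoluteness of the computation recursion, so that $L_{\alpha}$ and $V$ agree on the outcome of $P^{y}$ for every $y\in\mathfrak{P}(\omega)\cap L_{\alpha}$. The only cosmetic difference is that you argue the contrapositive in one sweep where the paper runs a direct case analysis on how $P$ fails to recognize $x$ inside $L_{\alpha}$; the content is identical.
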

\begin{proof}
Let $P$ be an $ITRM$-program. If $P^{x}$ stops, then it does so in less than $\omega_{\omega}^{x,CK}$ many steps (see \cite{KoeMi}). Hence $P^{x}$ stops inside $L_{\alpha}$ or does not stop at all. As computations are absolute, the result of the computation is absolute between
$L_{\alpha}$ and $V$. If $P$ does not recognize $x$ inside $L_{\alpha}$, then either $P^{x}\downarrow=0$ within $L_{\alpha}$, and hence inside $V$; or $P^{r}\uparrow$ for some $r\in L_{\alpha}\cap\mathfrak{P}(\omega)$ inside $L_{\alpha}$, and hence in $V$;
or $P^{x}\downarrow=P^{r}\downarrow=1$ for some $x\neq r\in L_{\alpha}\cap\mathfrak{P}(\omega)$ in $L_{\alpha}$ and hence in $V$. In each case, $x$ is not recognizable.
\end{proof}


\begin{thm}{\label{gaps}}
There is a gap in the recognizable reals, i.e. there are constructible reals $x<_{L}y<_{L}z$ such that $x$ and $z$ are recognizable, but $y$ is not.
\end{thm}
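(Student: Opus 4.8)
The plan is to take the Lost Melody real of Theorem \ref{LoMe} as the upper endpoint, the empty set as the lower endpoint, and to squeeze a non-recognizable real in between by a Cantor-style diagonalization carried out \emph{inside} the model that the upper endpoint codes. Concretely, let $\gamma$ be least with $L_{\gamma}\models ZF^{-}$ and put $z:=cc(L_{\gamma})$. By Theorem \ref{LoMe}, $z$ is recognizable, and --- exactly as in the proof of that theorem --- $z\notin L_{\gamma}$: otherwise $L_{\gamma}$, being a model of $ZF^{-}$ and hence proving the Mostowski collapse lemma, could decode $z$ to a transitive set isomorphic to $(L_{\gamma},\in)$, forcing $L_{\gamma}\in L_{\gamma}$. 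Since $L_{\gamma}$ is an initial segment of $<_{L}$, every real in $L_{\gamma}$ is $<_{L}$-below $z$. Also put $x:=\emptyset$, which is computable and hence recognizable. It therefore suffices to produce a real $y\in L_{\gamma}$ that is not recognizable: then $y\neq\emptyset$ (since $\emptyset$ is recognizable), so $x<_{L}y$, and $y\in L_{\gamma}\not\ni z$, so $y<_{L}z$, giving the gap $x<_{L}y<_{L}z$.

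Suppose, toward a contradiction, that every $r\in\mathfrak{P}(\omega)\cap L_{\gamma}$ is recognizable. The first step is to transfer recognizability \emph{into} $L_{\gamma}$: if a program $P$ recognizes $r\in L_{\gamma}$, then for every $y\in\mathfrak{P}(\omega)\cap L_{\gamma}$ the computation $comp(P^{y})$ is absolute between $L_{\gamma}$ and $V$ --- it halts (or is seen not to halt) by stage $\omega_{\omega}^{CK,y}$, and $\omega_{\omega}^{CK,y}<\gamma$ because $L_{\gamma}\models ZF^{-}$, so this is the relativized form of the absoluteness lemma for $ITRM$-computations proved above --- and $\mathfrak{P}(\omega)\cap L_{\gamma}\subseteq\mathfrak{P}(\omega)$, so $L_{\gamma}\models$ ``$P$ recognizes $r$''. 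Hence $L_{\gamma}\models$ ``every real is recognizable''. Now argue inside $L_{\gamma}$: the relation ``$P_{i}$ recognizes $r$'' refers only to $ITRM$-computations (absolutely computed) and to the reals of $L_{\gamma}$, so it is definable over $L_{\gamma}$; thus $F(r):=\mu i\,(P_{i}\text{ recognizes }r)$ is a total definable function on $\mathfrak{P}(\omega)^{L_{\gamma}}$, which by replacement lies in $L_{\gamma}$, and it is injective because a single program recognizes at most one real. (That $\mathfrak{P}(\omega)^{L_{\gamma}}$ is a set in $L_{\gamma}$ follows from $L_{\gamma}\models ZF^{-}+V=L\models CH$; alternatively one reruns the argument using only that $\mathrm{ran}(F)\subseteq\omega$ is a set, whence $\mathfrak{P}(\omega)^{L_{\gamma}}=F^{-1}[\mathrm{ran}(F)]$ is a set by replacement.) So $L_{\gamma}$ satisfies ``$\mathfrak{P}(\omega)$ injects into $\omega$'', contradicting Cantor's theorem, which is provable in $ZF^{-}$.

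This contradiction produces the desired non-recognizable $y\in L_{\gamma}$, and the three reals $x<_{L}y<_{L}z$ then witness the gap. The step I expect to be the main obstacle is the transfer of recognizability into $L_{\gamma}$ together with the surrounding absoluteness bookkeeping: one must check carefully that the universal quantifier over oracles in the definition of recognizability behaves correctly when passing between $V$ and the set model $L_{\gamma}$, which is exactly the point where the absoluteness lemma for $ITRM$-computations and the fact $\omega_{\omega}^{CK,y}<\gamma$ are needed. Once this is in place, the diagonalization against Cantor's theorem is routine.
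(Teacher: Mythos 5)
Your proof is correct, but it reaches the middle real $y$ by a genuinely different route than the paper. The paper takes the least $\gamma$ with $L_{\gamma}\models ZF^{-}+\exists x\,\neg RECOG(x)$ (its existence justified by a counting argument: countably many recognizables versus cofinally many $ZF^{-}$-levels below $\omega_{1}$), sets $z=cc(L_{\gamma})$ for \emph{that} $\gamma$, and imports the witness $y$ from $L_{\gamma}$ via the upward-absoluteness lemma for $\neg RECOG$. You instead work with the least $\gamma$ such that $L_{\gamma}\models ZF^{-}$, so that Theorem \ref{LoMe} applies verbatim to $z=cc(L_{\gamma})$, and you manufacture the non-recognizable $y\in L_{\gamma}$ directly, by transferring recognizability downward into $L_{\gamma}$ and diagonalizing via replacement and Cantor. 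That internal diagonalization is precisely the content of the paper's later theorem that $RECOG^{L_{\alpha}}$ is bounded in any $ZF^{-}$-level and of the corollary that the least $ZF^{-}$-level already contains a non-recognizable real; you have front-loaded that corollary into the gap theorem. What your version buys: you need not re-run the Lost Melody recognition procedure for the enlarged theory $ZF^{-}+\exists x\,\neg RECOG(x)$ (the paper must check that this extra statement is verifiable by an $ITRM$ inside a coded model), and your explicit downward-transfer step addresses a point the paper's counting argument glosses over, namely that a real non-recognizable in $V$ need not be non-recognizable in the sense of $L_{\gamma}$: the valid direction is the one you use, $RECOG\cap L_{\gamma}\subseteq RECOG^{L_{\gamma}}$, so the diagonalization must be carried out against $RECOG^{L_{\gamma}}$, exactly as you do.

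One justification in your parenthetical remark is wrong, though harmlessly so because you supply the correct alternative: $L_{\gamma}\models ZF^{-}+V=L$ does \emph{not} make $\mathfrak{P}(\omega)^{L_{\gamma}}$ a set. For the least $\gamma$ with $L_{\gamma}\models ZF^{-}$, minimality forces $L_{\gamma}\models$ ``every set is countable'' (otherwise $L_{\omega_{1}^{L_{\gamma}}}$ would be a smaller transitive $ZF^{-}$-level), so the reals of $L_{\gamma}$ form a proper class of $L_{\gamma}$; the $V=L$ form of $CH$ only locates each individual real in a countable level and says nothing about the collection of all reals being a set. Your fallback is the right move: the range of the injective definable $F$ is a subclass of $\omega$, hence a set by separation, and applying replacement to $F^{-1}$ --- or, even more economically, applying separation to the diagonal class $\{n\in\omega\mid n\in\mathrm{ran}(F)\wedge n\notin F^{-1}(n)\}$ --- yields the Cantor contradiction inside $ZF^{-}$ without ever needing $\mathfrak{P}(\omega)^{L_{\gamma}}$ to be a set.
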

\begin{proof}
As $\{\alpha<\omega_{1}|L_{\alpha}\models ZF^{-}\}$ is cofinal in $\omega_{1}$, but $|RECOG|=\omega$, there must be a minimal $\gamma$ such that $L_{\gamma}\models ZF^{-}+\exists{x\subseteq\mathfrak{P}(\omega)}\neg RECOG(x)$. Let $z=cc(L_{\gamma})$.
Using the same argument as for the Lost Melody theorem for $ITRM$'s, we see that $z\in RECOG$. Now let $\omega\supset y\in L_{\gamma}$ such that $L_{\gamma}\models\neg RECOG(y)$. By our lemma above, it follows that $\neg RECOG(y)$.
Hence $0<_{L}y<_{L}z$ witnesses our claim.
\end{proof}

The next natural question is how large these gaps can become.

\begin{defini}
Let $\delta$ be an ordinal. A $\delta$-gap is a $<_{L}$-intervall $[x,y[$ of constructible reals of order type $\delta$ such that no element of $[x,y[$ is constructible, while $y$ is.\\
A strong $\delta$-gap is an intervall of ordinals $[\alpha,\alpha+\delta[$ such that, for no $\beta\in[\alpha,\alpha+\delta[$, $L_{\beta}$ contains a recognizable ordinal, while $L_{\alpha+\delta}$ contains one.
\end{defini}

The following results hold for both gaps and strong gaps, each time by an obvious modification of the proof. Note that the relation between the two notions is not trivial: In particular, a strong gap may not be a gap at all,
since the fact that it does not contain recognizable reals may be due to the fact that it contains no reals at all. Since this is not the kind of phenomenon we are interested in, we make the following definition:

\begin{defini}
 Let $G:=[\alpha,\beta[$ be a strong gap, where $\beta$ is a limit ordinal. $G$ is called substantial, if, for any $\gamma\in G$, there is $\gamma<\gamma^{\prime}\in G$ such that $\mathfrak{P}^{L}(\omega)\cap(L_{\gamma+1}-L_{\gamma})\neq\emptyset$.
$G$ is weakly substantial iff, for every $\gamma\in G$, there is $\delta\in G$ such that $\delta>\gamma$ and $(L_{\delta+1}-L_{\delta})\cap\mathfrak{P}(\omega)\neq\emptyset$.
\end{defini}

\begin{defini}
Let $x,y\in\mathfrak{P}(\omega)$. Then $x\oplus y\subseteq\omega$ is defined by $\forall_{i\in\omega}((2i\in x\oplus y\leftrightarrow i\in x)\wedge(2i+1\in x\oplus y\leftrightarrow i\in y))$.
\end{defini}

\begin{prop}
If $a$ and $b$ are recognizable, then so is $a\oplus b$. However, there are $a$ and $b$ such that $a\notin RECOG$, but $a\oplus b\in RECOG$.
\end{prop}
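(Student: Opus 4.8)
The plan is to treat the two assertions separately; the first is routine, and the second reuses the gap construction of Theorem~\ref{gaps}.

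\emph{Closure under $\oplus$.} Suppose $P_a$ recognizes $a$ and $P_b$ recognizes $b$. I would build a program $Q$ which, on oracle $w$, first simulates $P_a$ with the ``even part'' $w_0:=\{i:2i\in w\}$ as its oracle and then, only if that run accepts, simulates $P_b$ with the ``odd part'' $w_1:=\{i:2i+1\in w\}$ as its oracle, accepting iff both subruns accept. Simulating $P_a^{w_0}$ while the true oracle is $w$ is immediate: replace each oracle command ``is $j$ in the oracle?'' of $P_a$ by ``compute $2j$ in a scratch register, then query $w$'', and likewise use $2j+1$ for $P_b$. Since recognizers halt on every real input, the sequential composition always halts, using only $\max(n_a,n_b)$ plus a constant number of registers. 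By construction $Q^w\downarrow=1$ iff $w_0=a$ and $w_1=b$, i.e.\ iff $w=a\oplus b$, so $Q$ recognizes $a\oplus b$.

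\emph{A non-recognizable $a$ with $a\oplus b$ recognizable.} Let $\gamma$ be minimal with $L_\gamma\models ZF^-+\exists x\subseteq\omega\,\neg RECOG(x)$ --- a countable ordinal, as in the proof of Theorem~\ref{gaps} --- and set $z:=cc(L_\gamma)$. The reals $<_L$-below a given witness form a set in $L_\gamma$ (they lie in some $L_\beta\in L_\gamma$), so $L_\gamma$ can form the $<_L$-least real $a$ with $L_\gamma\models\neg RECOG(a)$, and $a\in L_\gamma$. By the lemma just before Theorem~\ref{gaps}, $L_\gamma\models\neg RECOG(a)$ yields $\neg RECOG(a)$; thus $a\notin RECOG$. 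I claim $a\oplus z\in RECOG$, which will finish the proof with $b:=z$.

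To recognize $a\oplus z$: on oracle $w$, first run the recognizer for $z$ (which exists by the argument for Theorem~\ref{gaps}) on $w_1:=\{i:2i+1\in w\}$; if it rejects, reject. Otherwise $w_1=z=cc(L_\gamma)$ and we may decode $L_\gamma$ from $w_1$. Let $\phi(v)$ be the $\in$-formula saying ``$v\subseteq\omega$, $\neg RECOG(v)$, and no $v'<_L v$ satisfies $\neg RECOG(v')$''; here $RECOG$, $<_L$, and the halting behaviour of $ITRM$'s are all expressible by $\in$-formulas, so $\phi$ is legitimate. Using Corollary~\ref{focheck} on the structure coded by $w_1$, search $\omega$ for the (existing) code $n^\ast$ with $L_\gamma\models\phi(x_{n^\ast})$; then $x_{n^\ast}=a$. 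Finally compare the even part $w_0$ bitwise with $x_{n^\ast}$, reading bit $i$ of $x_{n^\ast}$ off $w_1$ as ``(the code of the $i$-th natural number of $L_\gamma$)$\,\in_{w_1}n^\ast$'', and using two flag registers exactly as in the proof of $COMP\subseteq RECOG$ to detect agreement at all bits: reject at the first mismatch, accept once the flags synchronise. The program accepts iff $w_1=z$ and $w_0=a$, i.e.\ iff $w=a\oplus z$, so it recognizes $a\oplus z$.

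The delicate point is the last step. One cannot recognize $a$ directly --- it is not recognizable --- and one cannot test ``$w_0\in L_\gamma$'' by searching $L_\gamma$ for a code matching $w_0$, since that search diverges when $w_0\notin L_\gamma$. The remedy is to first isolate the \emph{fixed} code $n^\ast$ of $a$ via the first-order evaluation of $\phi$ (which halts precisely because $L_\gamma$ satisfies the existential statement built into the choice of $\gamma$) and only then run the bitwise comparison of $w_0$ against the real coded by $n^\ast$, which halts on every input by the flag-register device. Everything else is a routine assembly of oracle simulation under the coordinate shifts $j\mapsto 2j$ and $j\mapsto 2j+1$, the recognizability of $cc(L_\gamma)$, and first-order evaluation inside coded structures.
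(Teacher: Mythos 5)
Your proof is correct and follows essentially the same route as the paper: the even/odd splitting of the oracle for closure under $\oplus$, and for the second claim the pair $(y,z)$ from the proof of Theorem~\ref{gaps} with $z=cc(L_{\gamma})$. The only (harmless) difference is that you pin down $a$ as the $<_{L}$-least witness and locate its code in $z$ by evaluating a defining formula via Corollary~\ref{focheck}, whereas one could equally hard-wire into the recognizing program the fixed natural number coding $y$ in $cc(L_{\gamma})$.
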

\begin{proof}
The first statement is obvious: To recognize $a\oplus b$, simply apply the recognizing procedures for $a$ and $b$ seperately to the even and even bits of a given real, respectively. For the second statement, let $a$ and $b$ be $y$ and $z$ from the proof 
of the last theorem, respectively.
\end{proof}

\textbf{Question}: Are there nonrecognizable constructible reals $a$ and $b$ such that $a\oplus b\in RECOG$?

\begin{thm}{\label{smallgaps}}
For each $\delta<\omega_{1}^{CK}$, there is a (strong) $\delta$-gap.
\end{thm}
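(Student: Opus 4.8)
The plan is to imitate the proof of Theorem~\ref{gaps}, replacing its single non-recognizable real by a whole $<_L$-run of $\delta$ of them and capping that run by a recognizable real supplied by the Lost Melody construction of Theorem~\ref{LoMe}. Fix $\delta<\omega_1^{CK}$; since $\delta$ is recursive, fix a $\Delta_0$ relation $\prec$ on $\omega$ of order type $\delta$, and let $\sigma_\delta$ be the ($\Pi_1$) $\in$-sentence asserting that $\prec$ is well-founded, so that $\sigma_\delta$ forces the order type of $\prec$ to be $\delta$ in any transitive model containing the relevant ordinals. Let $T_\delta$ be the recursive $\in$-theory $ZF^-+V=L+\sigma_\delta+\psi_\delta$, where $\psi_\delta$ asserts the existence of a $<_L$-interval $I$ of constructible reals which is order-isomorphic to $(\operatorname{field}(\prec),\prec)$, all of whose members are non-recognizable, and which is capped by a real of the form $cc(L_{\gamma'})$ for $\gamma'$ minimal with $L_{\gamma'}\models ZF^-+\theta$, $\theta$ a fixed recursive clause to be specified. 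Let $\gamma$ be the least ordinal with $L_\gamma\models T_\delta$.

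First I would check that $\gamma$ exists, by the counting argument behind Theorem~\ref{gaps}. Since $|RECOG|=\omega$ and $RECOG\subseteq L$ (Theorem~\ref{shoenfield}), there is a countable ordinal $\beta^\ast$ with $RECOG\subseteq L_{\beta^\ast}$, so every real outside $L_{\beta^\ast}$ is non-recognizable; moreover, for any fixed non-recognizable real the countably many witnesses to failure of each program to recognize it lie in a common $L_{\beta^{\ast\ast}}$. Hence any level $L_\alpha\models ZF^-$ with $\alpha$ above $\beta^\ast$, $\beta^{\ast\ast}$ and above the ranks of suitable $\prec$-indexed reals and of the intended cap both contains, and proves non-recognizable, a run of $\delta$ consecutive reals of the desired shape; since $\{\alpha<\omega_1:L_\alpha\models ZF^-\}$ is cofinal in $\omega_1$, such an $\alpha$ is found, so $L_\alpha\models T_\delta$ and $\gamma$ is well defined.

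Next I would extract the gap. By condensation a well-founded model of $T_\delta$ inside $L_\gamma$ satisfies $V=L$ and is therefore literally some $L_{\gamma''}$ with $\gamma''<\gamma$, contradicting minimality; so $L_\gamma$ is $\in$-minimal for $T_\delta$ and the Lost Melody argument of Theorem~\ref{LoMe} (using Corollary~\ref{focheck} for first-order checking and the well-foundedness test of \cite{KoeMi}) shows $cc(L_\gamma)$ is recognizable. Fix witnesses for $\psi_\delta$ in $L_\gamma$: the interval $I$ and the real $w=cc(L_{\gamma'})$ with $\gamma'<\gamma$. Because all members of $I$ and all their $<_L$-predecessors have rank $<\gamma$, the interval $I$, its order type $\delta$, and the $<_L$-position of $w$ are computed the same way in $L_\gamma$ and in $V$; by the lemma preceding Theorem~\ref{gaps} every member of $I$ is genuinely non-recognizable (as $L_\gamma\models\neg RECOG(r)$ for each), while $w$ is genuinely recognizable because $L_{\gamma'}$ is again the minimal model of a recursive theory, so Theorem~\ref{LoMe} applies to it. Thus $I$ is a $\delta$-gap, and the strong-$\delta$-gap version follows by the same argument with levels of the $L$-hierarchy in place of $<_L$-positions of reals, as the paper indicates.

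The hard part will be arranging that $I$ has order type \emph{exactly} $\delta$ rather than merely at least $\delta$, i.e.\ controlling precisely which reals lie $<_L$ below the cap $w=cc(L_{\gamma'})$. Since $cc(L_{\gamma'})$ is placed at level $\gamma'+2$ (as in Theorem~\ref{LoMe}), stray reals of $L_{\gamma'+1}\setminus L_{\gamma'}$ or $L_{\gamma'+2}\setminus L_{\gamma'+1}$ can fall $<_L$ below $w$ and inflate $I$; one therefore wants $w$ to be the $<_L$-least real not in $L_{\gamma'}$. This is a fine-structural demand on $\gamma'$ --- essentially $\rho_1^{\gamma'}=\omega$ together with $<_L$-minimality of the $\Sigma_1$-master code of $L_{\gamma'}$ among the new reals --- which, noting that $\rho_\omega^{\gamma'}=\omega$ or $\rho_\omega^{\gamma'+1}=\omega$ already holds by Lemma~\ref{finestructure}, one folds into the clause $\theta$; alternatively one takes as the cap the $\Sigma_1$-master code $A$ of $L_{\gamma'}$ itself, which is $<_L$-least outside $L_{\gamma'}$ by definition and from which $L_{\gamma'}$ is $ITRM$-computable, recognizing $A$ by decoding $L_{\gamma'}$, checking $\in$-minimality via Corollary~\ref{focheck}, and checking $<_L$-minimality of $A$ via the G\"odel-function recursion of Theorem~\ref{LoMe}. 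A secondary point to watch is that the cap must be \emph{genuinely} recognizable and not merely recognizable in the sense of $L_\gamma$, which is exactly why it is taken to be a canonical code of a minimal model rather than an arbitrary real that $\psi_\delta$ merely asserts to be recognizable.
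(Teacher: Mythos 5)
Your toolkit is the right one (minimal $L$-levels of recursive theories, the Lost Melody recognition machinery, absoluteness of non-recognizability for $ZF^-$-levels, the counting argument), but you have made a structural choice that the paper deliberately avoids, and that choice is exactly where your proof has a hole. You place the cap of the gap \emph{inside} the minimal model, as a real $w=cc(L_{\gamma'})$ (or a master code of $L_{\gamma'}$) with $\gamma'<\gamma$, so that you must control precisely which reals sit immediately $<_L$-below $w$ --- the stray reals of $L_{\gamma'+1}\setminus L_{\gamma'}$ and $L_{\gamma'+2}\setminus L_{\gamma'+1}$, and the $<_L$-final segment of the reals of $L_{\gamma'}$. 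You correctly identify this as ``the hard part,'' but you do not solve it: the clause $\theta$ is never specified, and $\psi_\delta$ quantifies over caps ``of the form $cc(L_{\gamma'})$ for $\gamma'$ minimal with $L_{\gamma'}\models ZF^-+\theta$,'' which makes both the existence argument for $\gamma$ and the definition of $T_\delta$ circular until $\theta$ is pinned down. It is not established that any recursive $\theta$ produces a minimal model whose canonical code (or master code) sits immediately above a run of exactly (or even at least) $\delta$ consecutive non-recognizables; whether the reals of $L_{\gamma'}$ even \emph{have} a $<_L$-final segment of order type $\delta$ depends on the order type of $\mathfrak{P}(\omega)\cap L_{\gamma'}$ under $<_L$, which you do not control. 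Note also that your outer layer ($T_\delta$, $L_\gamma$, $cc(L_\gamma)$) then does no work: if $w$ were genuinely recognizable and the run below it genuinely non-recognizable, $w$ and that run would already be the gap, and $L_{\gamma'}\models ZF^-$ would suffice for the absoluteness step.

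The paper's proof sidesteps all of this by taking the cap to be (essentially) the code of the whole minimal model: it lets $\gamma(\delta)$ be minimal with $L_{\gamma(\delta)}\models ZF^-+V=L$ plus ``there is a $<_L$-interval of order type $\delta$ of non-recognizable reals'' (an internal, first-order property, so the counting argument gives existence directly), and caps the gap with $x\oplus cc(L_{\gamma(\delta)})$, where $x$ is a recursive real coding $\delta$. Since this real is not in $L_{\gamma(\delta)}$, it is automatically $<_L$-above every real of the model, so no fine-structural control of its immediate $<_L$-predecessors is needed; recognizing it is the Lost Melody argument plus an internal check (via the truth predicate of Corollary~\ref{focheck} and the well-ordering comparison of \cite{WITRM}, using the even bits $x$ as the code for $\delta$) that the coded model contains the required run, whose members are then genuinely non-recognizable by the absoluteness lemma. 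If you reorganize your argument so that the recognizable real bounding the gap is the code of the minimal model itself rather than a real inside it, your ``hard part'' disappears and the rest of your proposal goes through.
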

\begin{proof}
For every such $\delta$, there is a minimal $\gamma(\delta)$ such that $L_{\gamma(\delta)}\models ZF^{-}+V=L$ and $L_{\gamma(\delta)}$ contains a $<_{L}$-intervall of reals with order type $\delta$ and all elements unrecognizable. 
This is again because there are cofinally many $\alpha$ with $L_{\alpha}\models ZF^{-}+V=L$ in $\omega_{1}$, but only boundedly many recognizable reals. As $\delta<\omega_{1}^{CK}$, there is a recursive (in the classical sense) 
real $x\subseteq\omega$ coding $\delta$. Furthermore, let $r^{\prime}=cc(L_{\gamma(\delta)})$.\\
We claim that $x\oplus r^{\prime}$ is recognizable. This suffices, for it then follows that $x\oplus r^{\prime}$ is $>_{L}$ all elements of the $\delta$-intervall of unrecognizables in $L_{\gamma(\delta)}$. (If $x\oplus r^{\prime}$ was an element of 
$L_{\gamma(\delta)}$, then we could decode $r^{\prime}$ in $L_{\gamma(\delta)}$ and obtain $L_{\gamma(\delta)}\in L_{\gamma(\delta)}$, a contradiction.)\\
Now for the claim: By the last proposition, it suffices to show that both $x$ and $r^{\prime}$ are recognizable. As $x$ is recursive, it is computable, and hence recognizable.\\
Considering $r^{\prime}$, we start by checking whether it codes a well-founded model of $ZF^{-}+V=L$. This can be done following the procedure described in the proof of the Lost Melody Theorem.\\ 
The next step is to determine whether that model is $\in$-minimal with the property that it contains a $\delta$-gap. For this,
let $r^{\prime-1}(a)$ denote the element of $L_{\gamma(\delta)}$ coded by $a$ for $a\in\omega$. For each pair $(a,b)\in\omega^{2}$, the set
$\{j\in\omega|r^{\prime-1}(j)\in2^{\omega}\cap L\wedge r^{\prime-1}(a)<_{L}r^{\prime-1}(j)<_{L}r^{\prime-1}(b)\}$ is computable from $r^{\prime}$. It remains to check for each of these pairs whether all elements in between are unrecognizable and whether 
the order type of the intervall is $\geq\delta$. The first can be done using the implementation of the truth-predicate given in the proof of the Lost Melody Theorem: We test for each natural number $i$ whether $i$ belongs to the intervall and represents an 
unrecognizable real number. If no recognizable real in the intervall is found, we return a positive answer, otherwise a negative answer. Checking whether $\delta$ embeds in the order type of the intervall can be done using the algorithm by Koepke from
\cite{WITRM}.\\
Finally, we check the $<_{L}$-minimality of $r^{\prime}$. This can be done in the same way as in the proof of the Lost Melody Theorem if $r^{\prime}\in L_{\gamma(\delta)+3}$. Since $\gamma(\delta)$ is minimal such that $L_{\gamma(\delta)}$ is a model of a
certain first-order theory, we will have $\Sigma_{\omega}^{L_{\gamma(\delta)+1}}\{\{L_{\gamma(\delta)}\}\}=L_{\gamma(\delta)+1}$, hence the proof goes through as there.\\
\end{proof}

The following answers both the question concerning the supremum of the $L$-stages containing new recognizable ordinals and the maximal size of gaps. Thanks to Philip Welch who suggested it in a short conversation after my talk at CiE $2012$, where I presented
Theorem \ref{gaps} and Theorem \ref{smallgaps} along with a sharper version showing that there are strong gaps of all lengths $\delta<\omega_{\omega}^{CK}$ (see \cite{Recog}).\\

\begin{defini}
A countable ordinal $\alpha$ is $\Sigma_{1}$-stable iff there exists a $\Sigma_{1}$-formula $\phi$ such that $\alpha$ is minimal with $L_{\alpha}\models\phi$.\\
\end{defini}

Since there are only countably many $\Sigma_{1}$-formulas, there are only countably many $\Sigma_{1}$-stable ordinals. Let $\sigma$ be the supremum of the $\Sigma_{1}$-stable ordinals. Then $\sigma$ is a countable ordinal.

\begin{thm}{\label{largegaps}}
(i) We have $RECOG\subset L_{\sigma}$.\\
(ii) $RECOG$ is cofinal in $L_{\sigma}$, i.e. for every $\alpha\in\sigma$, there is $\alpha<\beta<\sigma$ such that $RECOG\cap(L_{\beta+1}-L_{\beta})\neq\emptyset$.\\
(iii) There is no $\sigma$-gap.\\
(iv) For any $\delta<\sigma$, there is a strong gap of size $\delta$.
\end{thm}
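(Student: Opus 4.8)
The plan is to reduce everything to one structural fact, which I would prove first: \emph{above every $\Sigma_{1}$-stable ordinal there is a larger one}. Consequently $\sigma$ is a limit of $\Sigma_{1}$-stable ordinals (in particular a limit ordinal, and every $\Sigma_{1}$-stable ordinal is $<\sigma$), and $\sigma$ is closed under ordinal addition and multiplication. Given a $\Sigma_{1}$-sentence $\phi$ with least level $\gamma$, the sentence $\phi^{*}:=$ ``there is a transitive $M\models ZF^{-}+V=L$ with $M\models\exists\beta(L_{\beta}\models\phi)$'' is again $\Sigma_{1}$ and true in $L$ (witnessed by any $L_{\alpha}\models ZF^{-}$ with $\alpha>\gamma$); its least level is $\delta_{0}+1$, where $\delta_{0}$ is least with $L_{\delta_{0}}\models ZF^{-}+V=L+\exists\beta(L_{\beta}\models\phi)$. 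Since $L_{\delta_{0}}$ computes first-order truth in its own levels correctly, its witness $\beta<\delta_{0}$ really satisfies $L_{\beta}\models\phi$, so $\delta_{0}>\gamma$, and thus $\delta_{0}+1$ is a $\Sigma_{1}$-stable ordinal above $\gamma$. Finally, as the $\Sigma_{1}$-stable ordinals are then cofinal in $\sigma$ and $L_{\delta_{0}}\models ZF^{-}$, every ordinal obtained from ordinals $<\sigma$ by $+$ or $\cdot$ lies in some such $L_{\delta_{0}}$ with $\delta_{0}+1<\sigma$.

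For (i): let $x\in RECOG$ be recognized by a program $P$, and let $\psi_{P}$ be the $\Sigma_{1}$-sentence ``there are $y\subseteq\omega$ and $c$ with $c$ a halting computation of $P$ on oracle $y$ with output $1$''. By Theorem~\ref{shoenfield} and absoluteness of computations $\psi_{P}$ holds in $L$, hence in some $L_{\alpha}$; let $\alpha_{P}$ be least such, so $\alpha_{P}$ is $\Sigma_{1}$-stable and $\alpha_{P}<\sigma$. Choose witnesses $y,c\in L_{\alpha_{P}}$. Since ``$c$ is a halting computation of $P^{y}$ with output $1$'' is arithmetic in $(c,y)$, hence absolute, we get $P^{y}\downarrow=1$ in $V$; as $P$ recognizes $x$ this forces $y=x$, so $x\in L_{\alpha_{P}}\subseteq L_{\sigma}$.

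For (ii) and (iii): given $\alpha<\sigma$, pick a $\Sigma_{1}$-stable $\gamma$ with $\alpha<\gamma<\sigma$, say least with $L_{\gamma}\models\phi$, and form $\delta_{0}<\sigma$ as above, so $\delta_{0}>\gamma>\alpha$ and $L_{\delta_{0}}\models ZF^{-}+V=L$. Running the proof of Theorem~\ref{LoMe} with the recursive theory $ZF^{-}+V=L+\exists\beta(L_{\beta}\models\phi)$ in place of $ZF^{-}$ (via Lemma~\ref{finestructure} and Corollary~\ref{focheck}) shows $cc(L_{\delta_{0}})$ is recognizable; it lies in $L_{\delta_{0}+2}$ by Lemma~\ref{finestructure} but not in $L_{\delta_{0}}$ (otherwise $L_{\delta_{0}}$ would decode itself), so it first appears at a level $\beta\in\{\delta_{0},\delta_{0}+1\}$ with $\alpha<\beta<\sigma$, proving (ii). For (iii), if $[x,y[$ were a $\sigma$-gap then by (i) $y\in L_{\beta}$ for some $\beta<\sigma$, so $[x,y[\subseteq\{z:z<_{L}y\}\subseteq\mathfrak{P}(\omega)\cap L_{\beta}$ because $<_{L}$ respects $L$-rank; taking $\delta_{0}$ as above with $\beta<\delta_{0}<\sigma$, the set $\mathfrak{P}(\omega)\cap L_{\beta}$, the relation $<_{L}\!\restriction L_{\beta}$ and its order type all belong to $L_{\delta_{0}}$, so that order type is $<\delta_{0}<\sigma$, contradicting that $[x,y[$ has order type $\sigma$.

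For (iv) the plan is to combine (i) and (ii) with a counting argument in the style of Theorem~\ref{smallgaps}. Put $\mathcal{L}=\{\beta:RECOG\cap(L_{\beta+1}-L_{\beta})\neq\emptyset\}$. As $RECOG$ is countable and (by the proof of (i)) each recognizable real appears below its associated $\Sigma_{1}$-stable ordinal, $\mathcal{L}$ is a countable subset of $\sigma$ whose order type one checks to be $<\sigma$, while by (ii) it is cofinal in $\sigma$. Since $\sigma$ is closed under $+$ and $\cdot$, the gaps in the increasing enumeration of $\mathcal{L}$ must be unbounded in $\sigma$ (otherwise $\mathcal{L}\subseteq[\min\mathcal{L},\min\mathcal{L}+\delta_{0}\cdot\mathrm{otp}(\mathcal{L})]$ for some $\delta_{0}<\sigma$, forcing $\sigma=\sup\mathcal{L}<\sigma$); hence for cofinally many $\xi<\sigma$ there is a strong gap of length $\ge\xi$, an interval of levels carrying no new recognizable real and topped by a recognizable level. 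Finally, for arbitrary $\delta<\sigma$ one takes such a strong gap $[\alpha,\alpha+\xi[$ with $\xi\ge\delta$ and passes to the final subinterval $[\beta,\alpha+\xi[$, where $\beta$ is the unique ordinal with $\beta+\delta=\alpha+\xi$ (one checks $\beta\ge\alpha$), which is a strong $\delta$-gap. I expect the main obstacle to be the claim that $\mathcal{L}$ has order type $<\sigma$, i.e.\ tight control over where recognizable reals can appear: this is the point at which the proof of Theorem~\ref{smallgaps} used a recursive code of $\delta$, and for $\delta\ge\omega_{1}^{CK}$ one must instead argue that each recognizable real --- hence each long recognizable-free block below $\sigma$ --- is already pinned down inside a $ZF^{-}$-model lying below $\sigma$; this bookkeeping is the technical heart of the theorem. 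The remaining steps are routine given the first paragraph and the absoluteness facts of Section~2.
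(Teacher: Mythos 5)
Your treatment of (i)--(iii) is correct. Part (i) is the paper's argument verbatim; for (ii) you route through $\delta_{0}$, the least $ZF^{-}+V=L$-level containing a witness to $\phi$, where the paper instead takes $r=cc(L_{\beta})$ for the $\Sigma_{1}$-stable $\beta$ itself and recognizes $r$ as the code of the minimal $L$-level satisfying $\phi$ --- both work, and yours has the mild advantage of staying inside $ZF^{-}$-models where the Lost-Melody machinery is most comfortable. Your (iii) is actually more complete than the paper's, which essentially only repeats the argument for (i); your observation that the interval sits inside $\mathfrak{P}(\omega)\cap L_{\beta}$, whose $<_{L}$-order type is computed inside some $L_{\delta_{0}}\models ZF^{-}$ with $\delta_{0}<\sigma$, is exactly the missing step. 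Your preliminary fact that the $\Sigma_{1}$-stable ordinals are cofinal in $\sigma$ (via the $\Sigma_{1}$-sentence $\phi^{*}$) is also something the paper uses tacitly in (ii) without proof.

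Part (iv), however, has a genuine gap, and it is precisely the one you flag yourself: your entire counting argument rests on the claim that $\mathcal{L}=\{\beta:RECOG\cap(L_{\beta+1}-L_{\beta})\neq\emptyset\}$ has order type $<\sigma$, and you do not prove it. Countability of $RECOG$ only gives $|\mathcal{L}|\leq\omega$, which says nothing about its order type as a subset of the countable ordinal $\sigma$; since $\mathcal{L}$ is cofinal in $\sigma$ by (ii) and $\mathrm{cf}(\sigma)=\omega$, no cardinality argument can rule out $\mathrm{otp}(\mathcal{L})=\sigma$, and I do not see how to establish $\mathrm{otp}(\mathcal{L})<\sigma$ short of proving something at least as hard as (iv) itself. (A secondary issue: your parenthetical bound $\mathcal{L}\subseteq[\min\mathcal{L},\min\mathcal{L}+\delta_{0}\cdot\mathrm{otp}(\mathcal{L})]$ ignores jumps at limit stages of the enumeration; these jumps are themselves long $\mathcal{L}$-free intervals, so the dichotomy survives, but the justification as written is off.) The paper avoids this entirely by working from above rather than inside $L_{\sigma}$: since $RECOG$ is countable it is bounded in the reals of $L_{\omega_{1}}$, so arbitrarily long blocks of levels carrying no new recognizable real exist outright; one then fixes a $\Sigma_{1}$-stable $\gamma>\delta$, lets $\alpha$ be minimal with $L_{\alpha}\models ZF^{-}$ containing a strong $\gamma$-gap, and shows that $z:=cc(L_{\gamma})\oplus cc(L_{\alpha})$ is recognizable --- the recognizable code $cc(L_{\gamma})$ serving as a certifiable name for the length $\gamma$, which is exactly the role played by the recursive code of $\delta$ in Theorem \ref{smallgaps} and exactly the obstruction you identified for $\delta\geq\omega_{1}^{CK}$. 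Recognizability of $z$ together with part (i) then places $\alpha$, and hence the gap, below $\sigma$, with absoluteness of non-recognizability for $ZF^{-}$-levels guaranteeing the gap is genuine in $V$. You should replace your counting argument for (iv) by a construction of this kind.
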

\begin{proof}
(i) Let $x\in RECOG$. Hence there is some $ITRM$-program $P$ such that $P$ recognizes $P$. The statement $\phi(P):=\exists{x}P^{x}\downarrow=1$ is $\Sigma_{1}$. Hence, the smallest $\alpha$ such that $L_{\alpha}\models\phi(P)$ is $\Sigma_{1}$-stable.
Let $y\in L_{\alpha}$ be such that $L_{\alpha}\models P^{y}\downarrow=1$. By absoluteness of computations, it follows that $P^{y}\downarrow=1$. As $P$ recognizes $x$, we must have $y=x$. Hence $x\in L_{\alpha}\subset L_{\sigma}$.\\
(ii) Suppose $\alpha<\sigma$. Hence, there is some $\Sigma_{1}$-stable ordinal $\beta$ such that $\alpha<\beta<\sigma$. Since $\beta$ is $\Sigma_{1}$-stable, it follows from our lemma above that $L_{\beta+2}$ contains a real coding $L_{\beta}$.
Let $r$ be the $<_{L}$-smallest real coding $L_{\beta}$. Certainly $r\in L_{\beta+2}-L_{\beta}$ then. We claim that $r$ is recognizable:\\
To see this, let $\phi$ be a $\Sigma_{1}$-statement such that $\beta$ is minimal with $L_{\beta}\models\phi$. We can then re-use the strategy of the proof of Theorem \ref{LoMe} to check whether $r$ codes a minimal $L$-level in which $\phi$ holds.
The minimality of $r$ can then also be checked in the same way as in the proof of Theorem \ref{LoMe}.\\
(iii) Suppose $x\in RECOG$, where $P$ recognizes $x$. Let $\alpha$ be minimal such that $x\in L_{\alpha}$ and $L_{\alpha}\models P^{x}\downarrow=1$. Hence $\alpha$ is minimal such that $L_{\alpha}\models\exists{y}P^{y}\downarrow=1$. So
$\alpha$ is $\Sigma_{1}$-stable. Hence, $x$ is an element of a $\Sigma_{1}$-stable stage, and thus of $L_{\sigma}$.\\
(iv) Here, we use the idea of Theorem \ref{smallgaps}. Let $\delta<\sigma$. Let $\gamma$ be $\Sigma_{1}$-stable such that $\delta<\gamma$, $x:=cc(L_{\gamma})$. Furthermore, let $L_\alpha$ be minimal such that $L_{\alpha}\models ZF^{-}$ and contains a
strong $\gamma$-gap, $y:=cc(L_{\alpha})$. Finally, set $z:=x\oplus y$. We can recognize $x$ because $L_{\gamma}$ is $\Sigma_{1}$-stable. Now, given some real number $w$, we can check whether $w=z$ by first testing whether the even bits are those of 
$x$, and then whether $y$ codes a well-founded $L$-level modelling $ZF^{-}$ and finally, whether it has a gap of size $\gamma$ using the same strategy as for Theorem \ref{smallgaps}.
\end{proof}

This bounds $RECOG$ from above. The first Lost Melody appears right after the computable reals, as the following result shows:

\begin{thm}
 There exists a recognizable real in $L_{\omega_{\omega}^{CK}+2}-L_{\omega_{\omega}^{CK}}$.
\end{thm}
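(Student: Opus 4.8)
The plan is to produce a recognizable real in $L_{\omega_\omega^{CK}+2}-L_{\omega_\omega^{CK}}$ by taking a canonical code of $L_{\omega_\omega^{CK}}$ itself and showing that (a) it lies at the right level of the $L$-hierarchy, and (b) it is recognizable. For (a), recall that $\omega_\omega^{CK}$ is the supremum of the first $\omega$-many admissible ordinals; it is not itself admissible, and in fact it is the limit of a sequence definable over $L_{\omega_\omega^{CK}}$, so the usual projectum argument applies. More precisely, the map $n\mapsto\omega_n^{CK}$ is $\Sigma_1$-definable over $L_{\omega_\omega^{CK}}$, so $\omega_\omega^{CK}$ is singular in the sense that $\rho_1^{\omega_\omega^{CK}}=\omega$; by (essentially) the argument of Lemma \ref{finestructure}, there is then a real in $L_{\omega_\omega^{CK}+1}$ coding a surjection $\omega\to L_{\omega_\omega^{CK}}$, and a diagonal argument gives a real in $L_{\omega_\omega^{CK}+2}-L_{\omega_\omega^{CK}}$. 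In particular the $<_L$-least real $r$ coding $(L_{\omega_\omega^{CK}},\in)$ lies in $L_{\omega_\omega^{CK}+2}-L_{\omega_\omega^{CK}}$. That $r\notin L_{\omega_\omega^{CK}}$ also follows directly: if it were, we could decode it inside $L_{\omega_\omega^{CK}}$ and get $L_{\omega_\omega^{CK}}\in L_{\omega_\omega^{CK}}$.

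For (b) — recognizability of $r$ — I would follow the template of Theorem \ref{LoMe}, but with $ZF^-+V=L$ replaced by a first-order characterization of $L_{\omega_\omega^{CK}}$ among the levels of $L$. Given an input real $w$, first use the well-foundedness test of \cite{KoeMi} to check that $w$ codes a well-founded extensional structure, hence (after Mostowski collapse) some $L_\beta$; using Corollary \ref{focheck} we can also check $L_\beta\models V=L$ and that $\beta$ is a limit of admissibles, i.e. that the structure believes "there are cofinally many admissible ordinals" while being correct about admissibility of its own elements (admissibility, i.e. $KP$, is a recursively axiomatized theory, so Corollary \ref{focheck} applies). The key point is that $\omega_\omega^{CK}$ is the \emph{least} ordinal $\beta$ such that $L_\beta$ is a limit of admissibles; this should follow because $\omega_n^{CK}<\omega_\omega^{CK}$ for each $n$ and any $\beta<\omega_\omega^{CK}$ lies below some $\omega_n^{CK}$, hence has only finitely many admissibles below it. So we check that no element of the coded structure is itself a well-founded model that is a limit of admissibles — exactly the $\in$-minimality check from Theorem \ref{LoMe}. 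Finally we verify the $<_L$-minimality of $w$ among reals coding $L_\beta$; since $\omega_\omega^{CK}$ is minimal for the first-order property just described, we get $\Sigma_\omega^{L_{\omega_\omega^{CK}+1}}\{\{L_{\omega_\omega^{CK}}\}\}=L_{\omega_\omega^{CK}+1}$, so $r\in L_{\omega_\omega^{CK}+3}$ (indeed $+2$) and the $<_L$-minimality test of Theorem \ref{LoMe} goes through verbatim.

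The main obstacle I expect is pinning down the right first-order (schematic, recursively enumerable) characterization of "$L_\beta$ is a limit of admissible ordinals" that Corollary \ref{focheck} can actually evaluate, and verifying that $\omega_\omega^{CK}$ is genuinely \emph{minimal} with that property rather than merely satisfying it. One has to be careful that the coded structure is correct about which of its elements are admissible (this is where transitivity/well-foundedness, already verified, is used, together with the absoluteness of $KP$), and that "cofinally many admissibles" is expressible; a clean way is to assert the axiom scheme of $KP$ plus "for every $x$ there is an admissible set containing $x$", which is recursively axiomatized and whose least $L$-model is exactly $L_{\omega_\omega^{CK}}$. The remaining ingredients — the well-foundedness test, the truth-predicate machinery, and the $<_L$-minimality argument — are all quoted from \cite{KoeMi} and Theorem \ref{LoMe}, so no new difficulty arises there.
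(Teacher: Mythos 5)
Your proposal is correct and follows essentially the same route as the paper: take $r=cc(L_{\omega_{\omega}^{CK}})$, characterize $L_{\omega_{\omega}^{CK}}$ as the minimal $L$-level satisfying a recursive theory saying that the admissibles are cofinal and every set lies in some admissible level, and then run the Lost Melody machinery for the $<_{L}$-minimality check using $\Sigma_{\omega}^{L_{\omega_{\omega}^{CK}+1}}\{L_{\omega_{\omega}^{CK}}\}=L_{\omega_{\omega}^{CK}+1}$. Your version is somewhat more explicit than the paper's about why $\omega_{\omega}^{CK}$ is genuinely minimal with this property and why $r$ lands in $L_{\omega_{\omega}^{CK}+2}$, but no new ideas are involved.
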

\begin{proof}
 Let $r=cc(L_{\omega_{\omega}^{CK}})$. As $r\notin L_{\omega_{\omega}^{CK}}$, $r$ is not computable. We claim that $r$ is recognizable. It is easy to check that $r$ codes an $L$-level which models 
'For each $i\in\omega$, $L_{\omega_{i}^{CK}}$ exists' and 'For all $x$, there is $i\in\omega$ such that $x\in L_{\omega_{i}^{CK}}$'. We can now check the $<_{L}$-minimality of $r$ as in the previous proofs, observing that
$\Sigma_{\omega}^{L_{\omega_{\omega}^{CK}+1}}\{L_{\omega_{\omega}^{CK}}\}=L_{\omega_{\omega}^{CK}+1}$.
\end{proof}

\begin{thm}
For each $\gamma<\sigma$, there are cofinally in $\sigma$ many $\alpha$ such that $L_{\alpha}\models ZF^{-}$, $[\beta,\alpha[$ is a weakly substantial strong gap for some $\beta<\alpha$ and $[\alpha,\alpha+\gamma[$ is a weakly substantial strong gap.
\end{thm}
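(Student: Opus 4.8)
The plan is to follow the template of Theorems~\ref{smallgaps} and~\ref{largegaps}(iv): build a recognizable real that codes, in a single package, a level $L_\alpha\models ZF^-$ together with a weakly substantial strong $\gamma$-gap sitting directly above it and a weakly substantial strong gap sitting directly below it, and then read off $\alpha$ from inside the coded model. Recognizability will again come from the Lost Melody truth-evaluation apparatus of Corollary~\ref{focheck}, applied once the whole configuration is pinned down as the least $L$-level with an appropriate recursive first-order theory; Lemma~\ref{finestructure} locates the relevant code two levels up; and the lemma preceding Theorem~\ref{gaps} is what makes internal verdicts usable, since it says a coded model's judgement ``$x$ is unrecognizable'' is downward correct --- exactly the direction needed for the ``no recognizable real is born here'' clauses --- while the recognizable reals that cap the two gaps are taken to be codes of $\Sigma_1$-stable levels, whose recognizability is Theorem~\ref{largegaps}(ii) and is provable inside any $ZF^-$-model.

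Concretely, I would fix $\gamma<\sigma$ and, to get cofinality, an arbitrary $\eta<\sigma$, and aim to produce $\alpha$ with $\eta<\alpha<\sigma$. First pick a $\Sigma_1$-stable $\tau>\max\{\eta,\gamma\}$, with witnessing $\Sigma_1$-sentence $\phi$; by Theorem~\ref{largegaps}(ii), $cc(L_\tau)$ is recognizable. Then let $\delta$ be minimal such that $L_\delta\models ZF^-+V=L$, $L_\delta$ sees the $\phi$-minimal level (forcing $\tau<\delta$, hence $\delta>\eta$), and $L_\delta$ internally exhibits an ordinal $\alpha$ with $L_\alpha\models ZF^-$ such that, inside $L_\delta$, the levels $[\alpha,\alpha+\gamma[$ form a weakly substantial strong gap and, for some $\beta<\alpha$, so do $[\beta,\alpha[$. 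I would argue that such a $\delta<\sigma$ exists by the usual counting: the $ZF^-$-levels are cofinal in $\sigma$ while $|RECOG|=\omega$, so all but boundedly many $ZF^-$-levels exhibit such a configuration (the internal tests being legitimate by the soundness of unrecognizability, by Corollary~\ref{focheck} for satisfaction in coded levels, and by Koepke's algorithm from~\cite{WITRM} for the order types of the two stretches), and one of them lies above $\tau$. Set $z:=cc(L_\tau)\oplus cc(L_\delta)$; as in Theorem~\ref{largegaps}(iv), $z\notin L_\delta$ and $z$ is recognizable, so $z\in L_\sigma$ by Theorem~\ref{largegaps}(i), whence $\delta<\sigma$ and $\alpha<\delta<\sigma$. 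Finally, since $L_\delta\models ZF^-$ proves the existence of $\omega_\omega^{CK}$, all computational verdicts in $L_\delta$ are absolute and so the configuration below $\alpha$ is genuine. As $\eta$ was arbitrary, the suitable $\alpha$ are cofinal in $\sigma$.

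The step I expect to be the main obstacle is the internal existence claim: that cofinally many $ZF^-$-levels really do contain an $\alpha$ with $L_\alpha\models ZF^-$ flanked by two weakly substantial strong gaps, the upper one of order type exactly $\gamma$. The tension is that making $L_\alpha$ recognizable in any way tends to make the codes of the levels immediately around $\alpha$ recognizable as well (by Lemma~\ref{finestructure} the $\Sigma_\omega$-projectum of a minimal $L$-level drops within one step, so such codes are born right away), which would plant recognizable reals inside the very stretches that are supposed to be gaps; \emph{weakly substantial} adds the opposite demand that honest reals must still be born cofinally inside those stretches. Reconciling the two --- and verifying uniformly by one $ITRM$-program both that a stretch of prescribed order type $\gamma$ is free of recognizable reals and that it is nonetheless capped --- is where the fine structure of projecta above $ZF^-$-models has to be orchestrated with the counting argument; everything else is a routine adaptation of Theorems~\ref{LoMe},~\ref{smallgaps} and~\ref{largegaps}(iv).
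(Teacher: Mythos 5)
Your overall architecture --- pin down a minimal $L$-level whose canonical code is recognizable, let it \emph{internally} exhibit the flanked configuration, use the downward absoluteness of non-recognizability for $ZF^{-}$-levels (the lemma before Theorem~\ref{gaps}) to make the internal gaps genuine, and push above an arbitrary bound by insisting that the level see a prescribed $\Sigma_{1}$-stable ordinal --- matches the paper's. The genuine gap is exactly the step you flag as the main obstacle, and the counting argument you propose does not close it. Counting $|RECOG|=\omega$ against the $ZF^{-}$-levels controls the \emph{true} set $RECOG$, but what you need is a $ZF^{-}$-level $L_{\delta}$ whose \emph{internal} set $RECOG^{L_{\delta}}$ exhibits a $ZF^{-}$-level $\alpha$ flanked by two weakly substantial strong gaps; for an arbitrary $\delta$ the internal set may contain false positives distributed in an uncontrolled way, so ``all but boundedly many $ZF^{-}$-levels exhibit such a configuration'' does not follow. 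Worse, the configuration cannot be produced directly in $V$ by counting: above $\sigma$ no new recognizable reals appear, so nothing caps the gaps there, and below $\sigma$ its existence is precisely what is being proven.

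The paper's missing ingredient is a reflection from $L_{\omega_{1}}$. One first checks that $L_{\omega_{1}}$ satisfies the first-order statement ``there are cofinally many $\delta$ such that $L_{\delta}\models ZF^{-}$, the reals are cofinal in $L_{\delta}$, $RECOG\subset L_{\delta}$, $RECOG^{L_{\delta}}$ is bounded in $L_{\delta}$, and there are at least $\gamma$ further levels on top of $L_{\delta}$''; this is easy because one may take $\delta>\sigma$, where the genuine $RECOG$ is entirely contained and the boundedness theorem for $ZF^{-}$-levels applies. Taking the elementary hull of $\emptyset$ (or of $\{\mu\}$ for a $\Sigma_{1}$-stable $\mu$, to obtain cofinality in $\sigma$), collapsing by condensation, and passing to the \emph{minimal} $\eta'$ satisfying this statement yields a recognizable $cc(L_{\eta'})$, hence $\eta'<\sigma$, and the gaps that $L_{\eta'}$ believes in are genuine by downward absoluteness of non-recognizability. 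Note that this also dissolves the tension you describe in your last paragraph: the flanked levels $\alpha$ are \emph{not} individually pinned down as minimal models of anything --- only the ambient $L_{\eta'}$ is --- so no recognizable codes are forced into the stretches that are supposed to be gaps, while weak substantiality is inherited from ``the reals are cofinal in $L_{\delta}$'' being part of the reflected statement. Your use of $cc(L_{\tau})\oplus cc(L_{\delta})$ and Theorem~\ref{largegaps} for the cofinality step is fine, but it is only the reflection argument that supplies the $\delta$ you are trying to recognize.
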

\begin{proof}
$L_{\omega_{1}}$ models the following statements: $ZF^{-}+V=L$, 'there are cofinally many reals', 'there are cofinally many $\delta$ such that $L_{\delta}\models ZF^{-}$, the reals are cofinal in $L_{\delta}$, $RECOG\subset L_{\delta}$, $RECOG^{L_{\delta}}$ 
is bounded in $L_{\delta}$ and there are at least $\gamma$ many $L$-levels containing $L_{\delta}$'. (As we may take $\delta>\sigma$, this is easy to see.)\\
Taking the elementary hull $H$ of $\emptyset$ in $L_{\omega_{1}}$ and collapsing it via\\ $\pi:H\rightarrow L_{\eta}$, we see that $cc(L_{\eta})$ is a countable model of the same statements. Let $\eta^{\prime}$ be minimal with these properties,
then $cc(L_{\eta^{\prime}})$ is recognizable for the usual reasons and hence $\eta^{\prime}<\sigma$. Hence, we get gaps of the desired kind in $L_{\sigma}$ by absoluteness of non-recognizability.\\
To see that this happens cofinally often in $L_{\sigma}$, take some $\Sigma_{1}$-stable ordinal $\mu$ and consider the elementary hull of $\{\mu\}$ in $L_{\omega_{1}}$ to get a gap above $\mu$.
\end{proof}

\begin{thm}
 Let $L_{\alpha}\models ZF^{-}$. Then there is $\delta<\alpha$ such that $RECOG^{L_{\alpha}}\subseteq L_{\delta}$.
\end{thm}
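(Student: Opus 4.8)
The plan is to relativize the argument of Theorem \ref{largegaps}(i) to $L_{\alpha}$ and then to exploit $L_{\alpha}\models ZF^{-}$ in order to collapse the role of $\sigma$ into a single ordinal below $\alpha$. Fix a canonical enumeration $(P_{i}\mid i\in\omega)$ of the $ITRM$-programs and, for each $i$, let $\psi_{i}$ be the sentence ``$\exists y\,P_{i}^{y}\downarrow=1$''. Because a halting $ITRM$-computation is a set and the statement ``$c$ is a halting computation of $P_{i}^{y}$ with output $1$'' is $\Delta_{0}$ in $(y,c)$, hence absolute between transitive sets, $\psi_{i}$ is genuinely $\Sigma_{1}$; and since $L_{\alpha}\models ZF^{-}$ the ordinal $\alpha$ is a limit, so whenever $L_{\alpha}\models\psi_{i}$ a witnessing pair $(y,c)$ already lies in some $L_{\beta}$ with $\beta<\alpha$, whence $L_{\beta}\models\psi_{i}$ by $\Delta_{0}$-absoluteness. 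Thus ``$L_{\alpha}\models\psi_{i}$'' is equivalent to ``$\exists\beta<\alpha\;L_{\beta}\models\psi_{i}$''; define $g(i)$ to be the least such $\beta$ if it exists and $0$ otherwise.

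First I would observe that $g\colon\omega\to\alpha$ is definable over $L_{\alpha}$ (satisfaction in the set structures $L_{\gamma}$ is uniformly definable in $ZF^{-}$, so no self-truth-predicate is involved) and that $L_{\alpha}$ proves $g$ to be a total function with domain $\omega$. Applying Replacement in $L_{\alpha}$, the range $g[\omega]$ is a set of ordinals of $L_{\alpha}$, hence bounded; set $\delta:=\sup g[\omega]$, so $\delta<\alpha$. (The structures $L_{\gamma}$ with $\gamma<\alpha$, and their satisfaction relations, are absolute, so $g$ and $\delta$ are the same whether computed in $L_{\alpha}$ or in $V$.) Next I would check that $\delta$ is as required. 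Suppose $x\in RECOG^{L_{\alpha}}$, recognized by $P_{i}$ in the sense of $L_{\alpha}$, so that $L_{\alpha}\models P_{i}^{x}\downarrow=1$ and $L_{\alpha}\models\forall y\,(y\neq x\rightarrow P_{i}^{y}\downarrow=0)$. In particular $L_{\alpha}\models\psi_{i}$, so $g(i)\le\delta$, and there are $y_{0}\in L_{g(i)}$ and a halting computation $c_{0}\in L_{g(i)}$ of $P_{i}^{y_{0}}$ with output $1$. By absoluteness of computations, $P_{i}^{y_{0}}\downarrow=1$ genuinely, and since $c_{0}\in L_{g(i)}\subseteq L_{\alpha}$ this is also seen in $L_{\alpha}$; as $P_{i}$ recognizes $x$ in $L_{\alpha}$ (and a computation cannot halt with two different outputs), it follows that $y_{0}=x$. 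Hence $x\in L_{g(i)}\subseteq L_{\delta}$, and therefore $RECOG^{L_{\alpha}}\subseteq L_{\delta}$.

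The one genuine obstacle is obtaining a single bound $\delta<\alpha$ rather than merely a bound $g(i)<\alpha$ for each individual program: $\alpha$ may have cofinality $\omega$, so the countably many stabilization ordinals $g(i)$ need not have supremum below $\alpha$ by any general argument. This is exactly where the hypothesis $L_{\alpha}\models ZF^{-}$ is used, via Replacement (equivalently, Collection): it turns $\{g(i)\mid i\in\omega\}$ into a \emph{set} of ordinals of $L_{\alpha}$, which is then automatically bounded in $\alpha$. The remaining ingredients --- that $\psi_{i}$ is $\Sigma_{1}$, that $ITRM$-computations are absolute between transitive models of $ZF^{-}$, and that $\alpha$ is a limit ordinal --- are standard and have already been noted above.
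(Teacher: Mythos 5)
Your proof is correct and takes essentially the same route as the paper: a function on $\omega$ definable over $L_{\alpha}$ whose range is bounded below $\alpha$ by Replacement. The paper simply feeds Replacement the more direct function $f(i)=$ the real recognized by $P_{i}$ inside $L_{\alpha}$ (and $0$ if there is none), so that $RECOG^{L_{\alpha}}=f[\omega]\in L_{\alpha}$ at once, whereas you route the bound through the least level reflecting the $\Sigma_{1}$ statement $\exists y\,P_{i}^{y}\downarrow=1$; the underlying mechanism is identical.
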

\begin{proof}
We define the following function $f:\omega\rightarrow L_{\alpha}$: $f(i)=x$, if $P_{i}$ recognizes $x$ inside $L_{\alpha}$, i.e. if  $P^{i}(x)\downarrow=1$ and $P^{i}(y)\downarrow=0$ for all $x\neq y\in L_{\alpha}$, if such an $x$ exists.
Otherwise, let $f(i)=0(\in RECOG^{L_{\alpha}})$. It is easy to see that $f$ is definable by a $LAST$-formula.\\
Then, by replacement, $RECOG^{L_{\alpha}}=f[\omega]\in L_{\alpha}$. Hence, there is $\delta<\alpha$ such that $RECOG^{L_{\alpha}}\in L_{\delta+1}$, and thus $RECOG^{L_{\alpha}}\subseteq L_{\delta}$.
\end{proof}

Hence, the recognizables are bounded in $L$-levels modelling $ZF^{-}$. A careful analysis of the axioms actually needed reveals that something much weaker than $ZF^{-}$ is required here; in particular, replacement
is only required for $\Sigma_{3}$-formulas.\\

This gives us some information on the question where the first non-recognizable appears:

\begin{corollary}
Let $\alpha$ be minimal such that $L_{\alpha}\models ZF^{-}$. Then there is a non-recognizable real $x$ such that $x\in L_{\alpha}$.
\end{corollary}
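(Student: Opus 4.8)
The plan is to read the corollary off the theorem immediately above together with the downward absoluteness of non-recognizability: that theorem tells us that inside $L_{\alpha}$ the recognizable reals are confined to a proper initial segment $L_{\delta}$, while $L_{\alpha}$ still contains reals above $L_{\delta}$; any such real is then non-recognizable in $L_{\alpha}$, hence — by the earlier lemma asserting that $\neg RECOG(x)$ passes downward from a model $L_{\alpha}\models ZF^{-}$ to $V$ — non-recognizable in $V$.

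In detail, I would first apply the preceding theorem to fix $\delta<\alpha$ with $RECOG^{L_{\alpha}}\subseteq L_{\delta}$, and then sharpen this to $\delta<\omega_{1}^{L_{\alpha}}$. For the sharpening I would inspect that proof: there $RECOG^{L_{\alpha}}$ is exhibited as $f[\omega]$ for a function $f\colon\omega\to L_{\alpha}$ with $f\in L_{\alpha}$, so $RECOG^{L_{\alpha}}$ is a set of reals that is countable in the sense of $L_{\alpha}$. Since $L_{\alpha}\models ZF^{-}+V=L$, the usual condensation argument (valid under $ZF^{-}$) shows every constructible real of $L_{\alpha}$ lies in $L_{\omega_{1}^{L_{\alpha}}}$; as $\omega_{1}^{L_{\alpha}}$ is regular in $L_{\alpha}$, the countably many levels at which the elements of $RECOG^{L_{\alpha}}$ first appear are bounded by some $\delta<\omega_{1}^{L_{\alpha}}$, whence $RECOG^{L_{\alpha}}\subseteq L_{\delta}$ with $\delta<\omega_{1}^{L_{\alpha}}$. (Alternatively one can quote the relativization of Theorem \ref{largegaps}(i) to $L_{\alpha}$, which gives the same bound with $\delta$ the supremum of the $\Sigma_{1}$-stable ordinals of $L_{\alpha}$.)

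Next I would exhibit a real $x\in L_{\alpha}$ with $x\notin L_{\delta}$. If there were none, then $\mathfrak{P}(\omega)^{L_{\alpha}}=\{u\in L_{\delta}:u\subseteq\omega\}$ would be an element of $L_{\alpha}$ by Separation, and Cantor's theorem inside $L_{\alpha}$ would force it to have cardinality at least $\aleph_{1}^{L_{\alpha}}$; but as a subset of $L_{\delta}$ with $\delta<\omega_{1}^{L_{\alpha}}$ it has cardinality at most $|\delta|^{L_{\alpha}}\le\aleph_{0}$, a contradiction. So fix such an $x$. Since $RECOG^{L_{\alpha}}\subseteq L_{\delta}$ and $x\notin L_{\delta}$, we have $x\notin RECOG^{L_{\alpha}}$, i.e. $L_{\alpha}\models\neg RECOG(x)$; as $x\in L_{\alpha}$ and $L_{\alpha}\models ZF^{-}$, the earlier lemma yields $\neg RECOG(x)$ in $V$. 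This $x$ is the desired non-recognizable real of $L_{\alpha}$.

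I do not expect a genuine obstacle. In fact the argument uses nothing about $\alpha$ beyond $L_{\alpha}\models ZF^{-}$, so the minimality hypothesis is inessential (it is presumably stated only because the least such $\alpha$ is the natural early threshold one wants to compare against). The one spot requiring a little care is the passage from ``$\delta<\alpha$'' to ``$\delta<\omega_{1}^{L_{\alpha}}$'': one must check that the proof of the preceding theorem really delivers $RECOG^{L_{\alpha}}$ as a countable-in-$L_{\alpha}$ set and that the needed condensation and regularity facts are available under $ZF^{-}+V=L$ — all of which is routine.
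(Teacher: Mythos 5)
Your skeleton is the paper's: confine $RECOG^{L_{\alpha}}$ to a small part of $L_{\alpha}$, exhibit a real of $L_{\alpha}$ outside it, and push non-recognizability down to $V$ via the earlier absoluteness lemma. The final step and the appeal to the preceding theorem are fine. The problem is the middle of your argument: for the minimal $\alpha$ with $L_{\alpha}\models ZF^{-}$, the objects $\omega_{1}^{L_{\alpha}}$ and $\aleph_{1}^{L_{\alpha}}$ do not exist. Indeed, if $L_{\alpha}\models ZF^{-}+V=L$ and $L_{\alpha}$ believed that an uncountable cardinal exists, then with $\kappa:=\omega_{1}^{L_{\alpha}}$ we would have $L_{\kappa}=(H_{\omega_{1}})^{L_{\alpha}}$ by condensation inside $L_{\alpha}$, and $(H_{\omega_{1}})^{L_{\alpha}}\models ZF^{-}$ because the regularity of $\kappa$ in $L_{\alpha}$ yields collection; so $L_{\kappa}\models ZF^{-}$ with $\kappa<\alpha$, contradicting minimality. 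Hence the minimal model satisfies ``every set is countable'': your sharpening of $\delta$ to $\delta<\omega_{1}^{L_{\alpha}}$ is vacuous, and the comparison ``cardinality at least $\aleph_{1}^{L_{\alpha}}$ versus at most $\aleph_{0}$'' cannot be run as written. (Your closing remark that minimality is inessential is backwards in an ironic way: the argument you wrote is the one that works exactly when $\omega_{1}^{L_{\alpha}}$ does exist, which never happens for the minimal $\alpha$.)

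The repair is short and lands on what the paper actually does. Since every set of $L_{\alpha}$ is countable in $L_{\alpha}$, any $L_{\delta}$ with $\delta<\alpha$ contains only countably many reals in the sense of $L_{\alpha}$, and the diagonal argument --- which in the form ``there is no surjection from $\omega$ onto $\mathfrak{P}(\omega)$'' needs neither power set nor any cardinals --- produces a real of $L_{\alpha}$ outside $L_{\delta}$. Better still, diagonalize against $RECOG^{L_{\alpha}}$ directly: the paper first argues, via non-projectibility of $L_{\alpha}$ (which holds because $L_{\alpha}$ satisfies replacement), that an enumeration $f:\omega\leftrightarrow RECOG^{L_{\alpha}}$ is an \emph{element} of $L_{\alpha}$, and then the real $d$ with $d(i)=1-(f(i))(i)$ lies in $L_{\alpha}$ and differs from every member of $RECOG^{L_{\alpha}}$; the downward absoluteness lemma finishes. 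So keep your outline, but the existence of a real of $L_{\alpha}$ avoiding $RECOG^{L_{\alpha}}$ must be justified without ever mentioning $\omega_{1}^{L_{\alpha}}$.
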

\begin{proof}
By the last theorem, $RECOG^{L_{\alpha}}\in L_{\alpha}$. Furthermore, $f:\omega\leftrightarrow RECOG^{L_{\alpha}}$ is definable over $L_{\alpha}$. Hence $f$ is a definable subset of an element of $L_{\alpha}$ and therefore itself an
element of $L_{\alpha}$, since $L_{\alpha}\models ZF^{-}$. (Otherwise, for some $n\in\omega$, $\rho_{n}^{L_{\alpha}}<\alpha$, and the $\Sigma_{n}$-Skolem function $h_{n}$, which is definable over $L_{\alpha}$, maps $\rho_{n}$ surjectively
onto $L_{\alpha}$ and so, since $L_{\alpha}$ satisfies the replacement axiom, we get $L_{\alpha}\in L_{\alpha}$, a contradiction.) 
Thus we can define the usual diagonal function $d:\omega\rightarrow\{0,1\}$ by $d(i):=1-(f(i))(i)$ in $L_{\alpha}$. Then $d\in L_{\alpha}$, but $d$ is different from
all recognizable reals in $L_{\alpha}$.
\end{proof}

\textbf{Remark}: Following the arguments of \ref{largegaps}, we can further conclude that the first unrecognizable real is an element of $L_{\tilde{\sigma}}$, where $\tilde\sigma$ is the supremum of the $\Sigma_{1}$-stable ordinals in $L_{\alpha}$. 
With a bit more sophistication, this bound can be improved. We plan to investigate this in further work (see question $2$ below) and thank Philip Welch for some helpful suggestions in this direction.\\

\textbf{Question}: (1) When exactly does the first nonrecognizable appear? I.e. what is the minimal $\alpha$ such that $L_{\alpha}\cap\mathbb{R}\subsetneq RECOG$?\\
(2) More generally, let $\alpha>\omega_{\omega}^{CK}$ be such that $(L_{\alpha+1}-L_{\alpha})\cap\mathfrak{P}(\omega)\neq\emptyset$. Does it follow that $(L_{\alpha+1}-L_{\alpha})\cap(\mathfrak{P}(\omega)-RECOG)\neq\emptyset$? 

\subsection{Recognizable Sets of Reals}


\begin{defini}  
A set $X\subseteq\mathfrak{P}^{L}(\omega)$ is recognizable iff there is an $ITRM$-program $P$ such that, for all $r\in\mathfrak{P}(\omega)$, $P^{r}\downarrow=1$ iff $r\in X$, and otherwise $P^{r}\downarrow=0$. (Hence $\{r\}$ is recognizable iff
$r$ is.)
\end{defini}

The recognizable sets are closed under union, intersection and complementation and hence form a Boolean algebra. (To see closure under complementation, if $A$ is recognized by $P$, then $P^{\prime}$, which simply changes the output $x$ of $P$ to $1-x$ 
recognizes $\mathfrak{P}(\omega)-A$.) In \cite{KoeMi}, it is shown that every recognizable set
is $\Delta_{2}^{1}$ and that every $\Pi_{1}^{1}$-set is recognizable.\\
The crucial property for us is that every non-empty recognizable set of a certain kind contains a recognizable element.

\begin{defini}
Let $P$ be an $ITRM$-program. We call $x\subseteq\omega$ 'safe for $P$' iff $P^{x}(i)\downarrow$ for all $i\in\omega$.
\end{defini}

\begin{lemma}{\label{safetytest}}
Let $n\in\omega$ and let $(P_{i}|i\in\omega)$ be a canonical enumeration of $ITRM$-programs using at most $n$ registers. Then there is a program $Q$ such that, for all $x\subseteq\omega$, $Q^{x}(i)\downarrow=1$ iff $x$ is safe for $P_{i}$, and 
otherwise $Q^{x}(i)\downarrow=0$.
\end{lemma}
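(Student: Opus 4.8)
The plan is to combine the halting-problem solver $H_n$ from Theorem \ref{haltingproblem} with the ``flag register'' technique used in the proof of $COMP\subseteq RECOG$. First I would observe that $x$ is safe for $P_i$ exactly when $P_i^x(j)\downarrow$ for every $j\in\omega$, and that each of these individual halting questions is decidable uniformly: by a routine $s$-$m$-$n$-style argument there is a recursive function $s\colon\omega^2\to\omega$ such that $P_{s(i,j)}$ still uses at most $n$ registers and $P_{s(i,j)}^x\downarrow$ iff $P_i^x(j)\downarrow$ (simply let $P_{s(i,j)}$ be $P_i$ preceded by $j$ increment instructions on $R_1$). Consequently $H_n^x(s(i,j))$ always halts, with output $1$ if $P_i^x(j)\downarrow$ and $0$ otherwise.

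Next I would describe $Q$. On input $i$ with oracle $x$, it first sets two designated flag registers to $R_a:=1$, $R_b:=0$ (these being disjoint from $R_1$ and from the registers used as scratch space for simulating $H_n$), and then runs a loop over $j=0,1,2,\dots$. At the start of each iteration it checks whether $R_a=R_b$, halting with output $1$ if so. In iteration $j$ it computes $H_n^x(s(i,j))$; if the result is $0$, then $P_i^x(j)\uparrow$, so $x$ is not safe for $P_i$ and $Q$ halts with output $0$; if the result is $1$, then $Q$ ``flashes'' the flag registers exactly as in the proof of $COMP\subseteq RECOG$, i.e.\ it keeps $R_a$ at $1$ and $R_b$ at $0$ throughout the iteration apart from a brief toggle $R_a\colon 1\to 0\to 1$ and $R_b\colon 0\to 1\to 0$ at its end, and then passes to iteration $j+1$.

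Then I would check correctness. If $x$ is not safe for $P_i$, let $j_0$ be least with $P_i^x(j_0)\uparrow$; since $H_n$ is total, the loop reaches iteration $j_0$, the query $H_n^x(s(i,j_0))$ returns $0$, and $Q$ halts with output $0$. If $x$ is safe for $P_i$, every query returns $1$, so $Q$ performs $\omega$ iterations without halting; as the flag registers are held constant during the (possibly transfinite) $H_n$-subcomputations and take the value $0$ cofinally often across the iterations, at the $\omega$-th iteration boundary both have $\liminf$ equal to $0$, whereas at every finite boundary $R_a=1\neq 0=R_b$; hence $Q$ halts with output $1$ at exactly that step. In either case $Q^x(i)\downarrow$ with the value required by the lemma.

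The only delicate point is the one already handled in the proof of $COMP\subseteq RECOG$: one must ensure that the equality test on the flag registers is performed only at loop boundaries and that the two registers are never simultaneously $0$ at a limit step lying inside a single $H_n$-subcomputation — both hold because those registers are untouched while $H_n$ is being simulated. An equivalent, perhaps tidier, packaging would be first to write down, by uniformity of the construction, a single program $P_{u(i)}$ using some fixed number $N\geq n$ of registers that performs the loop above and halts iff $x$ is safe for $P_i$, and then to set $Q$ to be ``on input $i$, run and output $H_N^x(u(i))$''.
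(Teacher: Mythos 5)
Your proposal is correct and follows essentially the same route as the paper: invoke the bounded-register halting-problem solver and then search through all of $\omega$, using the flag-register device to halt with output $1$ after $\omega$ many successful checks. The extra details you supply (the $s$-$m$-$n$ reduction from $P_i^x(j)$ to a parameterless program with at most $n$ registers, and the limit-stage analysis of the flags) are exactly the routine modifications the paper leaves implicit.
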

\begin{proof}
By \cite{KoeMi}, there is an $ITRM$-program solving the halting problem for $ITRM$-programs using at most $n$ registers, i.e. an $ITRM$-program $H$ such that $H^{x}(p(i,j))\downarrow=1$ iff $P_{i}^{x}(j)\downarrow$, and $H^{x}(p(i,j))\downarrow=0$, otherwise.
This can easily be modified to obtain the desired program $Q$, using the ability of $ITRM$s to search through the whole of $\omega$.
\end{proof}

\begin{lemma}{\label{containmenttest}}
There is an $ITRM$-program $P$ such that, for $x,y\subseteq\omega$, $P^{x\oplus y}\downarrow=1$ iff $y$ codes some $L_{\alpha}$ such that $x\in L_{\alpha}$ and $P^{x\oplus y}\downarrow=0$, otherwise.
\end{lemma}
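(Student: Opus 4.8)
The plan is to reduce the claim to the well-foundedness test of \cite{KoeMi} and the first-order checking machinery of Corollary \ref{focheck}, combined with the unbounded-search (``flag register'') technique already used several times above. Given an oracle, $P$ first splits it into its even and odd bits to recover $x$ and $y$. It then runs the program of Corollary \ref{focheck} on $y$ with the trivially true $\in$-formula $\exists v\,(v=v)$; by that corollary this returns $1$ precisely when $y$ codes some $L_{\alpha}$ (any real that codes an $L$-level codes one with $\alpha\geq 1$, since there is no bijection $\omega\to L_{0}=\emptyset$). If the answer is $0$, then $P$ halts with output $0$. Otherwise we may assume $y$ codes $L_{\alpha}$ via a bijection $f:\omega\to L_{\alpha}$ with $a\in y\leftrightarrow f((a)_{1})\in f((a)_{2})$, and that, by Corollary \ref{focheck}, $P$ has a uniform procedure for deciding the truth in $L_{\alpha}$ of $\in$-formulas whose parameters are named by natural numbers.

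Next $P$ searches through $a\in\omega$ for a code witnessing $x\in L_{\alpha}$. The guiding observation is that $f(a)=x$ holds iff (i) every $\in$-predecessor of $a$ codes a finite von Neumann ordinal, and (ii) for all $m\in\omega$, $m\in x$ iff $L_{\alpha}$ believes that the finite ordinal $m$ is an element of $f(a)$. To make (ii) effective, $P$ computes by recursion the code $b_{m}$ of the finite ordinal $m$ inside the coded structure: $b_{0}$ is the unique code of an $\in$-minimal element, and $b_{m+1}$ is the unique $b$ with $L_{\alpha}\models f(b)=f(b_{m})\cup\{f(b_{m})\}$, each found by a search over $\omega$ using Corollary \ref{focheck} to evaluate the relevant formula, and each unique by the extensionality of $L_{\alpha}$. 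Should $b_{m}$ fail to exist for some $m$ --- which can happen only when $\alpha$ is so small that $m\notin L_{\alpha}$ --- then $x\in L_{\alpha}$ forces $m\notin x$, so $P$ may abandon the current $a$ and, once the outer search is exhausted, output $0$. For fixed $a$, conditions (i) and (ii) amount to a $\Pi_{1}$ statement over $\omega$, which $P$ decides by the familiar search-for-a-counterexample with flag registers; if none is found, $a$ is a witness and $P$ halts with output $1$. If the outer search over all $a$ is exhausted without success, $P$ halts with output $0$.

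Finally one checks that $P$ always halts: the call to Corollary \ref{focheck} halts; and if $y$ codes $L_{\alpha}$, then either some $a$ witnesses $x\in L_{\alpha}$, or the search over $\omega$ --- each of whose steps itself halts --- runs through all of $\omega$ without a witness and $P$ outputs $0$. The one genuine subtlety is the usual one: the construction nests unbounded searches (over $a$, over counterexamples $m$, and over $b$ in the recursion for $b_{m}$), so it must be realized by a single $ITRM$-program using a fixed finite number of registers; this is handled exactly as in the proof of Theorem \ref{LoMe}, where the analogous problem for formulas of unbounded quantifier complexity is solved, and I expect this bookkeeping rather than any conceptual point to be the main obstacle.
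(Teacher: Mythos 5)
Your proposal is correct and follows essentially the same route as the paper: split the oracle, verify that $y$ codes an $L$-level using the Lost Melody machinery, recursively compute codes for the finite ordinals inside the coded structure, and then exhaustively search for an $a$ whose coded set has exactly the members of $x$ among the naturals. The remaining differences are cosmetic --- the paper reads the membership relation off $y$ directly rather than invoking Corollary \ref{focheck}, and omits your condition (i) (harmlessly, since $f(a)\cap\omega=x$ with $f(a)\in L_{\alpha}$ already yields $x\in L_{\alpha}$); note also that your worry about $b_{m}$ failing to exist never arises, as a real coding $L_{\alpha}$ forces $\alpha\geq\omega$.
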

\begin{proof}
By the proof of Theorem \ref{LoMe}, it is clear that we can check whether $y$ codes some $L_{\alpha}$. If that is the case, it remains to test whether $x\in L_{\alpha}$.\\
First, we claim that, for any $i\in\omega$, it is possible to compute a code for $i$ in $y$ uniformly in $y$ and $i$. This can be done using a straightforward recursion: To find a code $k_{0}$ for $0$, we
search through $\omega$ for some $k$ such that $p(j,k)\notin y$ for all $j\in\omega$. To find a code $k_{i+1}$ for $i+1$ given the codes $k_{0},...,k_{i}$ for $0,...,i$, we search for some $k\in\omega$ such that $p(j,k)\in y$ iff
$j\in\{k_{0},...,k_{i}\}$. It is easy to see that these searches can be carried out uniformly by an $ITRM$-program and yield the desired code.\\
Next, we claim that we can check, for some $i\in\omega$ whether $i$ codes $x$ in $y$ in the oracle $x\oplus y$. For this, let $j\in\omega$ be given. Use the first claim to find $k_{j}\in\omega$ coding $j$ in $y$. Now it is easy to 
check whether $j\in x\leftrightarrow p(k_{j},i)\in y$. Running through all $j\in\omega$ in this way, we obtain the desired checking procedure.\\
Finally, all that remains is to carry out this step for all $i\in\omega$ and return $1$ iff an appropriate $i$ is found and $0$ otherwise.
\end{proof}

This lemma will be applied in the following way: Let $P$ be an $ITRM$-program and let $x\subseteq\omega$ be safe for $P$. Then, there is a program $Q$ that decides whether or not $P^{x}$ computes a code of some $L$-level of which $x$ is an element.
Since it is clear that we can use the output of $P$ in the same way that we use an oracle, this follows immediately from the lemma.\\

\begin{defini}
Let $n\in\omega$. A real $x$ is called special iff $x\in L_{\omega_{1}^{CK,x}}$. The set of special reals is denoted by $SPECIAL$.
\end{defini}

\begin{lemma}{\label{specialunbounded}}
There are unboundedly many special real numbers, i.e. for every $\alpha<\omega_1$, there is a $1$-special real $x$ such that $x\notin L_{\alpha}$. 
\end{lemma}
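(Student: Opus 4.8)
The plan is to show that for every $\alpha<\omega_{1}$ there is a countable ordinal $\gamma>\alpha$ with $L_{\gamma}\models ZF^{-}+V=L$ which is \emph{pointwise definable from a single real parameter}, and then to take $x:=cc(L_{\gamma})$. Pointwise definability forces $x$ to be constructed only finitely many $L$-levels above $\gamma$, while $\gamma$ is automatically below $\omega_{1}^{CK,x}$; this makes $x$ special. At the same time $x\notin L_{\gamma}\supseteq L_{\alpha}$ by the usual self-reference argument from Theorem \ref{LoMe}, so letting $\alpha$ vary yields unboundedly many special reals.

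To get $\gamma$, first fix a real $w$ coding a well-ordering $<_{w}$ of $\omega$ of order type $\alpha$, and let $\gamma$ be minimal such that $\gamma>\alpha$, $w\in L_{\gamma}$ and $L_{\gamma}\models ZF^{-}+V=L$; such $\gamma$ exists and is countable, as the $ZF^{-}$-levels are cofinal in $\omega_{1}$. The structural claim is that $L_{\gamma}=\Sigma_{\omega}^{L_{\gamma}}\{\{w\}\}$. Set $H:=\Sigma_{\omega}^{L_{\gamma}}\{\{w\}\}$ with collapse $\pi:H\equiv L_{\bar{\gamma}}$. Since $w\subseteq\omega$ and each $n\in\omega$ is definable over $L_{\gamma}$ without parameters, $\omega\subseteq H$, so $\pi$ fixes $\omega$ and $\pi(w)=w$; and for each $n\in\omega$ the set of $<_{w}$-predecessors of $n$ is definable from $w$ and $n$, so its order type lies in $H$, and as $n$ varies these order types exhaust $\alpha$; hence $\alpha\subseteq H$, and since $\alpha$ itself ($=$ the order type of $<_{w}$) is in $H$ we get $\bar{\gamma}>\alpha$. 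By elementarity $L_{\bar{\gamma}}\models ZF^{-}+V=L$, $w\in L_{\bar{\gamma}}$, and $\alpha$ is countable in $L_{\bar{\gamma}}$; minimality of $\gamma$ forces $\bar{\gamma}=\gamma$, so $\pi$ is the identity and $H=L_{\gamma}$.

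Now put $x:=cc(L_{\gamma})$. Since the $\Sigma_{\omega}$-Skolem function $h_{\omega}^{L_{\gamma}}$ is definable over $L_{\gamma+1}$ and $w\in L_{\gamma}$, the equality $L_{\gamma}=\Sigma_{\omega}^{L_{\gamma}}\{\{w\}\}$ yields a surjection $\omega\to L_{\gamma}$ already in $L_{\gamma+2}$, whence $cc(L_{\gamma})\in L_{\gamma+n}$ for some $n<\omega$. That $x\notin L_{\gamma}$ is the argument from Theorem \ref{LoMe}: $L_{\gamma}$ is a transitive model of $ZF^{-}$, hence admissible and correct about well-foundedness, so from $x\in L_{\gamma}$ it would compute the transitive collapse of the extensional well-founded relation coded by $x$, namely $L_{\gamma}$ itself, contradicting $L_{\gamma}\notin L_{\gamma}$; in particular $x\notin L_{\alpha}$. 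For specialness, note that $L_{\omega_{1}^{CK,x}}[x]$ is an admissible set containing $x$, so by the same correctness it computes the collapse of the relation coded by $x$; thus $L_{\gamma}\in L_{\omega_{1}^{CK,x}}[x]$ and $\gamma<\omega_{1}^{CK,x}$. As $\omega_{1}^{CK,x}$ is a limit ordinal, $\gamma+n<\omega_{1}^{CK,x}$, so $x\in L_{\gamma+n}\subseteq L_{\omega_{1}^{CK,x}}$, i.e. $x$ is special.

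The main obstacle, and the reason for the pointwise-definability detour, is controlling the $L$-level at which $cc(L_{\gamma})$ first appears: a naive choice of $\gamma$ (e.g. simply the least $ZF^{-}$-level above $\alpha$) need not work, since for a ``fat'' $L_{\gamma}$ a code of it may only be built around $(\gamma^{+})^{L}$, which can exceed $\omega_{1}^{CK,x}$ and so spoil specialness. Using a \emph{real} $w$ rather than the ordinal $\alpha$ as the Skolem-hull parameter is exactly what keeps $\gamma$ strictly above $\alpha$ while making $L_{\gamma}$ thin enough that $cc(L_{\gamma})$ appears only $n$ levels above $\gamma$; once that is secured, the remainder is routine decoding and absoluteness bookkeeping.
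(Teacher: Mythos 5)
Your proof is correct, and it reaches the same punchline as the paper --- produce a real $x$ coding some $\gamma>\alpha$ such that $x$ appears only finitely many $L$-levels above $\gamma$, then use $\omega_{1}^{CK,x}>\gamma$ together with $\omega_{1}^{CK,x}$ being a limit --- but by a genuinely different and more self-contained route. The paper's own proof is a two-line sketch: it takes \emph{any} $\gamma>\alpha$ with $(L_{\gamma+1}-L_{\gamma})\cap\mathfrak{P}(\omega)\neq\emptyset$, notes $\rho_{\omega}^{L_{\gamma}}=\omega$, and asserts that a code of a well-ordering of type $\gamma$ is recursive in the resulting real; it leaves implicit both what $x$ actually is (it must in effect be a code of $L_{\gamma}$, not an arbitrary new real) and the fine-structural fact that the projectum dropping to $\omega$ makes such a code appear within finitely many levels of $\gamma$. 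Your detour through the minimal $ZF^{-}+V=L$ level above $\alpha$ containing $w$, shown by condensation to equal $\Sigma_{\omega}^{L_{\gamma}}\{\{w\}\}$, supplies exactly this control without invoking master-code machinery, and mirrors the hull argument the paper itself uses in Lemma \ref{finestructure} and Theorem \ref{LoMe}; likewise, collapsing the coded structure inside the admissible set $L_{\omega_{1}^{CK,x}}[x]$ is a cleaner way to get $\gamma<\omega_{1}^{CK,x}$ than the paper's claim that a well-ordering of type $\gamma$ is literally recursive in $x$ (it is only arithmetic in $x$, which still suffices). Two cosmetic caveats: for $\alpha<\omega$ there is no well-ordering of $\omega$ of type $\alpha$, so replace $\alpha$ by $\max(\alpha,\omega)$; and, exactly as in the paper's version, the construction needs a constructible $w$, so the lemma should be read with $\omega_{1}$ interpreted in $L$ (or under $V=L$), a defect of the statement rather than of your argument.
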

\begin{proof}
Let $\alpha<\omega_1$ be arbitrary, and let $\gamma>\alpha$ be such that $(L_{\gamma+1}-L_{\gamma})\cap\mathfrak{P}(\omega)\neq\emptyset$, so that $\rho_{\omega}^{L_{\gamma}}=\omega$. Certainly, a code of a
well-ordering of order type $\gamma$ is recursive in $x$, so $\omega_{1}^{CK,x}>\gamma$. As $\omega_{1}^{CK,x}$ is a limit ordinal, it follows that $x\in L_{\omega_{1}^{CK,x}}$.
\end{proof}

\begin{lemma}{\label{containinglevels}}
There is $C\in\omega$ such that, for all special $x\in\mathfrak{P}^{L}(\omega)$, there is an $ITRM$-program $P$ using at most $C$ many registers such that $P^{x}$ computes a code for some 
$L_{\alpha}$ containing $x$.
\end{lemma}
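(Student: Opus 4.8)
The plan is to reduce this to Theorem \ref{compstrength1}, exploiting that the register bound $g(n)$ supplied there is uniform over \emph{all} oracles. It suffices to produce, for every special $x$, a code of some $L_{\alpha}$ with $x\in L_{\alpha}$ that lies in $L_{\omega_{1}^{CK,x}}[x]$; then Theorem \ref{compstrength1} applied with $n=1$ and oracle $x$ yields an $ITRM$-program using at most $g(1)$ registers that computes such a code from $x$, and we may simply set $C:=g(1)$, a quantity that depends on nothing.

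So let $x$ be special, i.e.\ $x\in L_{\omega_{1}^{CK,x}}$, and fix the least $\alpha<\omega_{1}^{CK,x}$ with $x\in L_{\alpha}$. Since $\alpha<\omega_{1}^{CK,x}$, the ordinal $\alpha$ is $x$-recursive; taking an $x$-recursive well-ordering of $\omega$ of order type $\alpha$ and collapsing it, we get a bijection $\omega\to\alpha$ at a low level of the $x$-relative $L$-hierarchy, hence inside $L_{\omega_{1}^{CK,x}}$ (here we use $x\in L_{\omega_{1}^{CK,x}}$, which gives $L_{\beta}[x]\subseteq L_{\omega_{1}^{CK,x}}$ for all small $\beta$). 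Composing with the $<_{L}$-enumeration of $L_{\alpha}$, which is $\Delta_{1}$ over $L_{\alpha+1}\in L_{\omega_{1}^{CK,x}}$, yields a bijection $f:\omega\to L_{\alpha}$ in $L_{\omega_{1}^{CK,x}}$, and then $r:=\{p(i,j)\mid f(j)\in f(i)\}$ is a subset of $\omega$ that is $\Delta_{0}$ in the parameter $f$, hence an element of $L_{\omega_{1}^{CK,x}}$; by construction $r$ codes $(L_{\alpha},\in)$ and $x\in L_{\alpha}$. (Alternatively one simply quotes the standard fact that for $\beta<\omega_{1}^{CK,x}$ a code of $L_{\beta}$ is hyperarithmetic in $x$, i.e.\ lies in $\mathfrak{P}(\omega)\cap L_{\omega_{1}^{CK,x}}$.) Since $L_{\gamma}\subseteq L_{\gamma}[x]$ for every $\gamma$, we have $r\in L_{\omega_{1}^{CK,x}}\subseteq L_{\omega_{1}^{CK,x}}[x]$, so Theorem \ref{compstrength1} applies to $r$ with oracle $x$ and $n=1$ and finishes the proof.

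I expect the only point requiring care to be the verification that a code of $L_{\alpha}$ genuinely lands in $L_{\omega_{1}^{CK,x}}[x]$ --- that is, the combination of ``$x$ special $\Rightarrow$ some $\alpha<\omega_{1}^{CK,x}$ has $x\in L_{\alpha}$'' with the coding fact that every $L_{\beta}$ with $\beta<\omega_{1}^{CK,x}$ already admits a real code in $L_{\omega_{1}^{CK,x}}$ --- together with the observation that the register count in Theorem \ref{compstrength1} is genuinely independent of the oracle, which is exactly how that theorem is phrased. Everything else is bookkeeping.
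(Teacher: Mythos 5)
Your proof is correct and follows the same overall strategy as the paper: locate a code for a suitable $L$-level inside the relativized hierarchy $L_{\omega_{n}^{CK,x}}[x]$ and then invoke Theorem \ref{compstrength1} to compute it from $x$ with a uniformly bounded number of registers. Where you differ is in the choice of level. The paper codes $L_{\omega_{1}^{CK,x}}$ itself: since $x$ is special, $L_{\omega_{1}^{CK,x}}$ is the minimal admissible $L$-level containing $x$, so the $\Sigma_{\omega}$-hull argument from the Lost Melody construction places a code in $L_{\omega_{1}^{CK,x}+3}\subseteq L_{\omega_{2}^{CK,x}}[x]$, yielding $C=g(2)$. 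You instead code the least $L_{\alpha}\ni x$ with $\alpha<\omega_{1}^{CK,x}$ and argue that this code already lies in $L_{\omega_{1}^{CK,x}}\subseteq L_{\omega_{1}^{CK,x}}[x]$, yielding the sharper constant $C=g(1)$; the price is that you must import the (standard, but external to the paper) relativized fact that every $L_{\beta}$ with $\beta<\omega_{1}^{CK,x}$ admits a code hyperarithmetic in $x$, whereas the paper's route reuses only machinery it has already developed. One small imprecision in your hands-on construction: the order type of $(L_{\alpha},<_{L})$ need not be $\alpha$, so ``composing'' your bijection $\omega\to\alpha$ with the $<_{L}$-enumeration of $L_{\alpha}$ is not literally a composition. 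This is harmless --- that order type is the collapse of a well-ordering belonging to the admissible set $L_{\omega_{1}^{CK,x}}[x]$ and hence is itself below $\omega_{1}^{CK,x}$, so the same argument applies to it, and in any case your quoted fallback fact covers the point. Since every later application of the lemma uses only the existence of some finite bound $C$, both constants serve equally well.
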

\begin{proof}
As $x\in L_{\omega_{1}^{CK,x}}$ by assumption, $L_{\omega_{2}^{CK,x}}$ certainly contains such a code. To see this, observe that since $x\in L_{\omega_{1}^{CK,x}}$, we have $L_{\omega_{1}^{CK,x}}[x]=L_{\omega_{1}^{CK,x}}$, so 
$L_{\omega_{1}^{CK,x}}$ is the $\subseteq$-minimal admissible set containing $x$. In particular, it is the minimal $L$-level which contains $x$ and is a model of $KP$. This implies that 
$\Sigma_{\omega}^{\{L_{\omega_{1}^{CK,x}+1}\}}\{L_{\omega_{1}^{CK,x}}\}=L_{\omega_{1}^{CK,x}+1}$, so $L_{\omega_{1}^{CK,x}+2}$ contains a bijection between $\omega$ and $L_{\omega_{1}^{CK,x}}$ and hence already $L_{\omega_{1}^{CK,x}+3}$ will
contain a code for $L_{\omega_{1}^{CK,x}}$.\\
Hence, by Theorem \ref{compstrength1}, there is an $ITRM$-program computing such a code from $x$ using at most $g(2)$ many registers.
\end{proof}

This construction can be uniformized:\\

\begin{lemma}{\label{uniformcontaininglevels}}
There is a program $Q$ such that, for any special real $x$, $Q^{x}$ computes the code of some $L$-level containing $x$.
\end{lemma}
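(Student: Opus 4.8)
The plan is to uniformize the construction of Lemma \ref{containinglevels} by a dovetailing search over the fixed, bounded-register programs it supplies. Let $C\in\omega$ be the constant from Lemma \ref{containinglevels} and let $(P_{i}\mid i\in\omega)$ be a canonical enumeration of the $ITRM$-programs using at most $C$ registers. Lemma \ref{containinglevels} tells us that for every special real $x$ there is some $i$ such that $x$ is safe for $P_{i}$ and $P_{i}^{x}$ computes a code for some $L_{\alpha}$ with $x\in L_{\alpha}$. The program $Q$ we build simply searches for the least such $i$ and then outputs the code that $P_{i}^{x}$ produces.

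Concretely, $Q^{x}$ proceeds for $i=0,1,2,\dots$ as follows. First it applies the safety-test program from Lemma \ref{safetytest} (for $n=C$) to decide whether $x$ is safe for $P_{i}$; if not, it passes to $i+1$. If $x$ is safe for $P_{i}$, then $P_{i}^{x}$ is total and, reading off $P_{i}^{x}(j)$ for $j\in\omega$, determines a real $r_{i}$. By the remark following Lemma \ref{containmenttest}, we may feed $r_{i}$ into the containment-test program of Lemma \ref{containmenttest} in place of an oracle, since each queried bit $r_{i}(j)$ can be recomputed by simulating $P_{i}^{x}(j)$, which halts by safety. Thus $Q^{x}$ checks whether $r_{i}$ codes some $L_{\alpha}$ with $x\in L_{\alpha}$: if not, it passes to $i+1$; if so, it recomputes $r_{i}$ and halts with output $r_{i}$. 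By Lemma \ref{containinglevels} some $i$ eventually passes both tests, so $Q^{x}$ halts with a code for an $L$-level containing $x$, as required.

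It remains to observe that $Q$ can be implemented with a finite number of registers, independent of $x$ and of $i$; this is exactly where the uniform bound $C$ from Lemma \ref{containinglevels} is used. The simulation of an arbitrary $P_{i}$ with at most $C$ registers is carried out by a fixed universal $ITRM$-program for the class of $\leq C$-register programs (the same device underlying the halting-problem solver of Theorem \ref{haltingproblem}), the safety-test program uses a fixed number of registers, and the containment-test program uses a fixed number of registers; at each stage $i$ only finitely many of these routines are composed, and the index $i$ together with the bookkeeping for the search fits into boundedly many registers. Hence $Q$ uses boundedly many registers overall. The main obstacle is precisely this uniform register count: absent the fact that all the programs produced by Lemma \ref{containinglevels} use at most $C$ registers, no single $ITRM$ could simulate all of them and the dovetailing would break down. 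Everything else is a routine composition of subroutines already established above (Lemmas \ref{safetytest} and \ref{containmenttest} and the truth-evaluation machinery from the proof of Theorem \ref{LoMe}).
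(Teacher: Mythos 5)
Your proposal is correct and follows essentially the same route as the paper's own proof: a search over the canonical enumeration of $\leq C$-register programs, filtering by the safety test of Lemma \ref{safetytest} and the containment test of Lemma \ref{containmenttest}, with termination guaranteed by Lemma \ref{containinglevels}. Your additional remarks on the uniform register bound and the universal simulation of $\leq C$-register programs are accurate and only make explicit what the paper leaves implicit.
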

\begin{proof}
By Lemma \ref{containinglevels}, there is an $ITRM$-program computing such a code from $x$ using at most $C$ many registers. We now perform the following procedure for each $ITRM$-program $P$ using at most
$C$ registers: First, we check whether $x$ is safe for $P$. If not, we continue with the next program. Otherwise, $P^{x}$ will compute some real $y$. Using Lemma \ref{containmenttest}, we
can test whether or not $y$ codes an $L$-level containing $x$. If not, we continue with the next program. If it does, then we have found an $ITRM$-program computing a code of the desired kind from $x$ uniformly in $x$.
The desired code can than be computed by carrying out this program.
\end{proof}

\begin{lemma}{\label{recogspecial}}
$SPECIAL$ is recognizable.
\end{lemma}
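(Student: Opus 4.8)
I want to show $SPECIAL$ is recognizable, i.e.\ produce an $ITRM$-program $T$ such that $T^{x}\downarrow=1$ iff $x\in L_{\omega_{1}^{CK,x}}$, and $T^{x}\downarrow=0$ otherwise. The natural idea is: given an arbitrary oracle $x$, try to use the uniform procedure $Q$ from Lemma~\ref{uniformcontaininglevels} to manufacture a code for an $L$-level containing $x$; if $x$ really is special this works, and then one can verify the output is what it claims to be. The subtlety is that $Q$ was only designed to behave on special reals, so on an arbitrary $x$ it may loop or produce garbage; hence the program $T$ must be written so that \emph{every} step is verifiable no matter what $x$ is.

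\textbf{Key steps.} First, I would recall that by Theorem~\ref{haltingtimes} and Lemma~\ref{safetytest} (applied with the register bound $C$ from Lemma~\ref{containinglevels}), there is a program that, on oracle $x$ and program index $i$, decides whether $x$ is safe for $P_i$ among the programs using at most $C$ registers; so $T$ can safely simulate each candidate $P_i$ in turn, skipping the unsafe ones. Second, for each safe $P_i$, the output $y_i:=P_i^{x}$ is a well-defined real, and by Lemma~\ref{containmenttest} there is a program deciding whether $y_i$ codes some $L_\alpha$ with $x\in L_\alpha$. So $T$ searches through the (finitely many) programs using at most $C$ registers; if it finds one whose output $y$ codes an $L$-level containing $x$, it outputs $1$; if it exhausts the list without success, it outputs $0$. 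Third, I would argue correctness: if $x\in SPECIAL$, then by Lemma~\ref{containinglevels} some $P_i$ with $\le C$ registers has $x$ safe for it and $P_i^{x}$ a code for an $L_\alpha\ni x$, so the search succeeds and $T^{x}\downarrow=1$; conversely, if $T^{x}\downarrow=1$, then some real $y$ computable from $x$ (hence $y\in L_{\omega_1^{CK,x}}$ by Theorem~\ref{compstrength2}) codes an $L_\alpha$ with $x\in L_\alpha$, and since $y\in L_{\omega_1^{CK,x}}$ the coded level $L_\alpha$ is (isomorphic to an initial segment sitting inside $L_{\omega_1^{CK,x}}$, so) $\alpha<\omega_1^{CK,x}$, whence $x\in L_\alpha\subseteq L_{\omega_1^{CK,x}}$ and $x\in SPECIAL$. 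In all other cases the search terminates (each simulation halts or is detected unsafe in bounded time, and there are finitely many programs to try) with output $0$.

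\textbf{Main obstacle.} The delicate point is the converse direction: I must be sure that when $T$ outputs $1$ the witness $y=P_i^{x}$ genuinely forces $x$ to lie low, i.e.\ that coding an $L$-level containing $x$ by a real recursive in $x$ really pins $\alpha$ below $\omega_1^{CK,x}$. This needs the observation that a real coding a well-founded extensional structure satisfying enough of $KP$ is, via its Mostowski collapse, some genuine $L_\alpha$, that this decoding is $\Delta_1$ over admissible sets, and hence if $y\in L_{\omega_1^{CK,x}}$ then $L_\alpha\in L_{\omega_1^{CK,x}}$, forcing $\alpha<\omega_1^{CK,x}$; combined with $x\in L_\alpha$ this gives $x\in L_{\omega_1^{CK,x}}$. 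One should also double-check the register-accounting so that all of the subroutines (safety test, the containment test of Lemma~\ref{containmenttest}, the simulations) can be run by a single $ITRM$-program, but this is routine given the uniform bound $C$.
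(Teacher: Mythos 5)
Your program has the right skeleton for one direction but the converse direction rests on a step that does not hold, and this is exactly the point where the paper's proof does extra work that your proposal omits. You conclude that the witness $y=P_i^{x}$ lies in $L_{\omega_1^{CK,x}}$ ``by Theorem~\ref{compstrength2}'', and from this that the coded level $L_\alpha$ satisfies $\alpha<\omega_1^{CK,x}$. But Theorem~\ref{compstrength2} as printed (a bound of $\omega_1^{CK,y}$ independent of the number of registers) cannot be taken literally: it contradicts Theorem~\ref{compstrength1}, which makes every real in $L_{\omega_n^{CK,y}}[y]$ computable from $y$ with $g(n)$ registers, and it contradicts the fact that $ITRM$s decide $\Pi^1_1$-complete sets of naturals (whose characteristic functions are not in $L_{\omega_1^{CK}}$). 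The correct bound for an $n$-register program is of the shape $L_{\omega_{n+1}^{CK,x}}[x]$. Since $C=g(2)$ may be large, a $C$-register program run on a \emph{non-special} $x$ could in principle output a code for some $L_\beta\ni x$ with $\omega_1^{CK,x}\le\beta<\omega_{C+1}^{CK,x}$; your test would then accept $x$ even though $x\notin L_{\omega_1^{CK,x}}$. In other words, the set your one-stage search actually decides is ``$x$ belongs to some $L$-level with a code computable from $x$ by a $\le C$-register program'', which is only guaranteed to be sandwiched between $SPECIAL$ and $\{x: x\in L_{\omega_{C+1}^{CK,x}}\}$, not to equal $SPECIAL$.

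The paper closes this gap with a second verification stage that your proposal is missing. After the search succeeds, it takes a computable code $c$ for the \emph{minimal} level $L_{\alpha+1}$ containing $x$ and then tests, inside the coded structure, whether $L_{\alpha+1}$ contains an ordinal into which every $x$-recursive well-ordering embeds; such an ordinal exists in $L_{\alpha+1}$ iff $\omega_1^{CK,x}<\alpha+1$, i.e.\ iff $x$ is \emph{not} special. This uses the $ITRM$-decidability of ``the $e$-th Turing program computes a well-ordering in the oracle $x$'' together with the embedding test from \cite{WITRM}, and it is precisely the step that converts your first-stage search into a correct recognizer. Two smaller points: there are infinitely many (not finitely many) programs using at most $C$ registers, so the exhaustive search must be organized as an $\omega$-search with the flag-register technique to detect that all candidates have failed; and the relativized containment should read $y\in L_{\gamma}[x]$ rather than $y\in L_{\gamma}$.
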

\begin{proof}
Let a real $x$ be given. 

Using the strategy from the proof of Lemma \ref{uniformcontaininglevels}, we search for an $ITRM$-program using at most $C$ registers computing a code for an $L$-level containing $x$ uniformly from $x$.
If no such program is found, then $x$ is not special and we return $0$. 
Otherwise, let $\alpha$ be minimal such that $x\in L_{\alpha+1}$ and let $c$ be a code for $L_{\alpha+1}$ computable using at most $C$ registers. Clearly $x\in\omega_{1}^{CK,x}$ iff $\omega_{1}^{CK,x}\notin L_{\alpha+1}$. 
The next step is hence to search for (a code of) some ordinal in $L_{\alpha+1}$ in which every $x$-recursive well-ordering embeds. This can be done using the fact that $ITRM$s can decide whether the $e$th Turing program computes a well-ordering
in the oracle $x$ (uniformly in $x$ - see \cite{KoeMi}) and the procedure for checking whether one well-ordering embeds in another described in \cite{WITRM}.
If such an element is found, we output $0$, otherwise, we output $1$.
\end{proof}

\begin{lemma}{\label{recoghasrecog}}
Let $X\neq\emptyset$ be a recognizable set of special reals. Then \begin{center} $X\cap RECOG\neq\emptyset$. \end{center}
\end{lemma}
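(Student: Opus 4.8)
The plan is to show that the $<_{L}$-least element $x_{0}$ of $X$ is itself recognizable; since $x_{0}\in X$ this immediately gives $X\cap RECOG\neq\emptyset$. Such an $x_{0}$ exists: being a recognizable set, $X$ is by definition a subset of $\mathfrak{P}^{L}(\omega)$, so the nonempty set $X$ consists of constructible reals and is well-ordered by $<_{L}$. Fix an $ITRM$-program $P$ witnessing the recognizability of $X$; since $X$ is recognizable, $P^{r}\downarrow\in\{0,1\}$ for \emph{every} real $r$, i.e.\ $P$ is total. By hypothesis every element of $X$, in particular $x_{0}$, is special.

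To recognize $x_{0}$, given a real $r$ I would proceed as follows. First run $P^{r}$; if $P^{r}\downarrow=0$, output $0$. Otherwise $r\in X$, hence $r$ is special, so by Lemma \ref{uniformcontaininglevels} we may compute, uniformly from $r$, a code $c$ for some $L_{\alpha}$ with $r\in L_{\alpha}$. The key observation is that $<_{L}$ orders sets first according to their constructibility rank, so every $<_{L}$-predecessor of $r$ has constructibility rank no larger than that of $r$ and therefore already lies in $L_{\alpha}$. Hence $r=x_{0}$ if and only if $r\in X$ (already verified) and no $<_{L}$-predecessor of $r$ belongs to $X$, and the latter can be tested inside $L_{\alpha}$ using the code $c$.

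For this test, use Corollary \ref{focheck} on the code $c$ to locate the natural number $m$ coding $r$ in $L_{\alpha}$ (search for $m$ with $i\in r\leftrightarrow p(\bar{i},m)\in c$ for all $i$, where $\bar{i}\in\omega$ codes the natural number $i$ in $c$, obtained as in the proof of Lemma \ref{containmenttest}). Then search through all $j\in\omega$: using Corollary \ref{focheck}, decide whether $L_{\alpha}$ thinks the element coded by $j$ is a subset of $\omega$ and is $<_{L}$ the element coded by $m$; if so, decode the corresponding real $y_{j}$ from $c$ (its $i$-th bit being $1$ iff $p(\bar{i},j)\in c$) and simulate $P^{y_{j}}$, answering each oracle query of the simulated computation by reading the relevant bit of $y_{j}$ off $c$. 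Because $P$ is total this simulation halts for every $j$. If some $j$ yields $P^{y_{j}}\downarrow=1$, then $y_{j}<_{L}r$ and $y_{j}\in X$, so $r\neq x_{0}$ and we output $0$; if the search over $\omega$ completes without finding such a $j$, then $r$ is the $<_{L}$-least element of $X$, so $r=x_{0}$, and we output $1$. Running the $\omega$-many halting subcomputations one after another and detecting that the search has finished without success is handled by the standard flag-register technique for $ITRM$s. All of this is carried out by a single $ITRM$-program using a fixed, $r$-independent finite number of registers, assembled from the programs of Lemma \ref{uniformcontaininglevels}, Lemma \ref{containmenttest}, Corollary \ref{focheck}, and a copy of $P$. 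By construction it outputs $1$ on input $r$ exactly when $r=x_{0}$, so $x_{0}\in RECOG$.

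I expect the main technical obstacle to be the simulation of the fixed program $P$ on the reals $y_{j}$ that live inside $L_{\alpha}$: the simulated oracle queries must be answered by decoding $c$, and the point that makes this work is that recognizability of $X$ forces $P$ to be total, so the simulation always terminates; this is precisely where it matters that $X$ is a recognizable \emph{set} rather than merely, say, a lightface $\Sigma^{1}_{1}$ set. The hypothesis that the reals in $X$ are special is used exactly once but is indispensable: via Lemma \ref{uniformcontaininglevels} it is what lets us produce, uniformly from $r$ alone, a code for an $L$-level capturing all $<_{L}$-predecessors of $r$, bounding the domain we must search. The remaining ingredients, namely extracting codes of natural numbers from $c$, evaluating $\in$-formulas in $L_{\alpha}$, and the $\omega$-search with completion detection, are routine and closely parallel the techniques in the proofs of Lemma \ref{containmenttest} and Theorem \ref{LoMe}.
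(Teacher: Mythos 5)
Your proof is correct and follows essentially the same route as the paper's: pass to the $<_{L}$-least element of $X$, use specialness to compute from a candidate $r$ a code for an $L$-level containing $r$ and hence all of its $<_{L}$-predecessors, and exploit the totality of the recognizing program $P$ to sweep those predecessors and confirm none lies in $X$. The only cosmetic difference is that the paper fixes the non-uniform program of Lemma \ref{containinglevels} together with a hard-coded index for $x$ in the resulting code, while you invoke Lemma \ref{uniformcontaininglevels} and search for the index of $r$; both variants work.
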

\begin{proof}
Let $\emptyset\neq X\subseteq SPECIAL$ be recognizable, and assume that $P$ recognizes $X$, where $P$ uses $n$ registers.  
As $X\subseteq SPECIAL$, we have $X\subseteq L$. Let $x$ be the $<_{L}$-minimal element of $X$. We claim that $x$ is recognizable.\\
To see this, note that there is an $ITRM$-program $Q$ that computes some $r$ coding an $L_{\alpha}$ with $x\in L_{\alpha}$ in the oracle $x$ by Lemma \ref{containinglevels}. As $x\in L_{\alpha}$, 
there is some $i\in\omega$ such that $i$ represents $x$ in $r$. To determine whether a certain given real $y$ is equal to $x$, we first check whether $y$ is special and safe for $Q$. By Lemma \ref{recogspecial} and Lemma \ref{safetytest}, 
this can be done uniformly in the oracle.
Then, using Lemma \ref{containmenttest}, we test whether $Q^{y}$ computes a code for an $L$-level in which $y$ is coded by $i$. 
If it doesn't, we output $0$. Otherwise, call $r^{\prime}$ the real calculated by $Q^{y}$. Now, we see whether $P^{y}\downarrow=1$ and output $0$ if this is not the case. 
Finally, we successively calculate $P^{z}$ for each special real coded in $r^{\prime}$ which is $<_{L}y$. (Note that these computations always terminate, because $P$ recognizes $X$. 
If the output is $1$ for some such real, we output $0$, otherwise, we output $1$. (Note that $P$ necessarily stops on each input, as it recognizes $X$.)
\end{proof}

\begin{corollary}{\label{unrecog}}
$SPECIAL\cap RECOG$ is not recognizable.
\end{corollary}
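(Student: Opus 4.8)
The plan is to argue by contradiction, combining the Boolean-algebra structure of the recognizable sets with Lemma \ref{recogspecial} and Lemma \ref{recoghasrecog}. So suppose toward a contradiction that $SPECIAL\cap RECOG$ is recognizable. Since $SPECIAL$ is recognizable by Lemma \ref{recogspecial}, and the recognizable sets are closed under complementation (inside $\mathfrak{P}(\omega)$) and intersection, the set
\[
SPECIAL\setminus RECOG \;=\; SPECIAL\cap\bigl(\mathfrak{P}(\omega)\setminus(SPECIAL\cap RECOG)\bigr)
\]
would then also be recognizable, and every element of it is of course special.

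The next step is to check that $SPECIAL\setminus RECOG$ is nonempty. By Theorem \ref{largegaps}(i) we have $RECOG\subset L_{\sigma}$, where $\sigma$ is a countable ordinal. On the other hand, by Lemma \ref{specialunbounded} the special reals are unbounded in $\omega_{1}$, so we may pick a special real $x$ with $x\notin L_{\sigma}$. Any such $x$ fails to be recognizable, hence $x\in SPECIAL\setminus RECOG$ and this set is nonempty.

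Now $SPECIAL\setminus RECOG$ is a nonempty recognizable set all of whose members are special, so Lemma \ref{recoghasrecog} applies and produces a recognizable element of $SPECIAL\setminus RECOG$. But by definition no element of $SPECIAL\setminus RECOG$ is recognizable, a contradiction. Therefore $SPECIAL\cap RECOG$ is not recognizable.

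The argument is short, and the only point requiring a little care is the appeal to the Boolean-algebra structure: one must take the complement inside $\mathfrak{P}(\omega)$ and then intersect back with $SPECIAL$, so that the resulting set is again a set of special reals and Lemma \ref{recoghasrecog} is genuinely applicable; this is the step I would spell out most explicitly. Everything else is an immediate combination of results already established above, so I do not anticipate a substantial obstacle.
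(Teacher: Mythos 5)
Your proof is correct and follows essentially the same route as the paper: assume recognizability, use closure of recognizable sets under intersection and complementation to get that $SPECIAL-(SPECIAL\cap RECOG)$ is recognizable, observe via Lemma \ref{specialunbounded} and Theorem \ref{largegaps} that it is nonempty, and derive a contradiction from Lemma \ref{recoghasrecog}. The only difference is that you spell out the Boolean-algebra step slightly more explicitly, which is a fine clarification.
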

\begin{proof}
Otherwise, $SPECIAL-(SPECIAL\cap RECOG)$ is recognizable since $SPECIAL$ is recognizable and the recognizable sets are closed under intersection and complementation. By Lemma \ref{specialunbounded}, there are special reals
which are not elements of $L_{\sigma}$ and hence not recognizable. Hence $SPECIAL-(SPECIAL\cap RECOG)\neq\emptyset$. By Lemma \ref{recoghasrecog}, this set must hence contain a recognizable element, a contradiction.
\end{proof}

\subsection{Refinements}

A surprising feature of $ITRM$'s is that their computational strength increases with the number of registers; in fact, for every $n\in\omega$, there is $m\in\omega$ such that an $ITRM$ with $m$ registers can solve the halting problem for
$ITRM$-programs using $n$ registers uniformly in the oracle (see above). In particular, there is no universal $ITRM$. Here, we show that the same is true for the recognizability strength, i.e. that, for every $n\in\omega$, 
there is some real $x\in RECOG$ such that $x$ cannot be recognized by an $ITRM$-program using $n$ registers.

\begin{defini}
$RECOG_{n}$ denotes the set of reals recognizable by an $ITRM$-program $P$ using at most $n$ registers. Elements of $RECOG_{n}$ are called $n$-recognizable.
\end{defini}

\begin{thm}
There is $m\in\omega$ such that, for $n\geq m$, $RECOG_{n}$ is cofinal in $L_{\sigma}$, i.e., for every $n\geq m$ and $\alpha<\sigma$, we have\\ $(L_{\sigma}-L_{\alpha})\cap RECOG_{n}\neq\emptyset$.
\end{thm}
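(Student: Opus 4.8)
The plan is to combine the proof of Theorem~\ref{largegaps}(ii) with a uniform bound on the number of registers needed. Recall that the recognizable reals cofinal in $L_\sigma$ produced in Theorem~\ref{largegaps}(ii) are of the form $r=cc(L_\beta)$, where $\beta$ is $\Sigma_1$-stable, say $\beta$ minimal with $L_\beta\models\phi$ for some $\Sigma_1$-formula $\phi$; the recognition procedure first tests well-foundedness of the coded relation, then checks that the coded structure is a minimal $L$-level satisfying $\phi$ (using Corollary~\ref{focheck}, which handles a single formula $\phi$), then checks $<_L$-minimality of $r$ among codes of $L_\beta$ in the manner of Theorem~\ref{LoMe}. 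The key observation is that every individual ingredient of this procedure --- the well-foundedness test, the truth-predicate evaluation of Corollary~\ref{focheck}, the $<_L$-minimality test via G\"odel functions, and the bookkeeping needed to compose these --- is carried out by a \emph{fixed} $ITRM$-program, independent of which $\Sigma_1$-stable $\beta$ (hence which $\phi$) is in play, because $\phi$ enters only as a numerical parameter (an index in the enumeration $(\phi_i\mid i\in\omega)$). Therefore there is a single number $m$ bounding the registers used across all these subroutines and their combination.

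Concretely, I would first fix, once and for all, the finitely many $ITRM$-programs occurring in the proof of Theorem~\ref{largegaps}(ii): the well-foundedness tester (Koepke--Miller), the truth-evaluator $P$ of Corollary~\ref{focheck} applied with $\phi$ as a parameter, the $L_{\gamma+i}$-name evaluation routine from the proof of Theorem~\ref{LoMe} used for $<_L$-minimality, and the halting-problem solvers needed to make the searches terminate. Let $m$ be the maximum of the register counts of a single program simulating all of these in sequence (using the standard technique for running several $ITRM$-subroutines with a shared pool of registers). By construction $m$ does not depend on $\phi$. Then for any $n\ge m$ and any $\alpha<\sigma$, choose a $\Sigma_1$-stable $\beta$ with $\alpha<\beta<\sigma$ (possible since $\sigma$ is the supremum of the $\Sigma_1$-stable ordinals and there are unboundedly many of them below $\sigma$), let $\phi$ witness $\Sigma_1$-stability of $\beta$, and let $r=cc(L_\beta)$. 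As in Theorem~\ref{largegaps}(ii), $r\in L_{\beta+2}-L_\beta$, so $r\in(L_\sigma-L_\alpha)$; and the fixed program above, run with index-of-$\phi$ hard-wired, recognizes $r$ using at most $m\le n$ registers. Hence $r\in(L_\sigma-L_\alpha)\cap RECOG_n$, as required.

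The main obstacle is verifying that the register count of the recognition procedure is genuinely independent of $\phi$. The truth-evaluation subroutine of Corollary~\ref{focheck} already accepts the formula as an input $i\in\omega$ and uses a fixed number of registers for all $i$ (this is exactly the ``trickery necessary to check the truth of formulas of arbitrary quantifier complexity by a single program'' cited there), so this part is immediate. The slightly delicate point is the $<_L$-minimality check: in Theorem~\ref{LoMe} this relies on the observation $\Sigma_\omega^{L_{\beta+1}}\{\{L_\beta\}\}=L_{\beta+1}$, which holds whenever $\beta$ is minimal for a first-order theory, and here holds equally when $\beta$ is minimal for a $\Sigma_1$-formula (indeed Lemma~\ref{finestructure} applies verbatim). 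One must check that the G\"odel-function name-evaluation recursion is itself a fixed program whose register usage does not grow with $\phi$; this is the case because the only role of $\phi$ there is, again, as a parameter fed to the truth-evaluator. Assembling all these fixed-size subroutines into one program multiplies constants but keeps the total finite and $\phi$-independent, which yields the uniform $m$.
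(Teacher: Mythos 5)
Your proposal is correct and is essentially the paper's own argument: the paper's (very terse) proof likewise observes that a single fixed program, taking the index of the $\Sigma_1$-formula as a numerical parameter, checks whether a real is the canonical code of the minimal $L$-stage satisfying that formula, and that such codes are cofinal in $L_{\sigma}$ by Theorem~\ref{largegaps}(ii). You have simply spelled out the uniformity-of-register-count details that the paper leaves implicit.
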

\begin{proof}
There exists a single program sufficient to check whether a certain real is the canonical code of the minimal $L$-stage modelling a certain $LAST$-statement by the proof of Theorem \ref{LoMe} and Lemma \ref{focheck}. 
This is enough to ensure cofinality in $L_{\sigma}$.
\end{proof}

\begin{thm}
Let $n\in\omega$. Then \begin{center} $SPECIAL\cap RECOG_{n}\subsetneq SPECIAL\cap RECOG$. \end{center} Consequently, $RECOG_{n}\subsetneq RECOG$.
\end{thm}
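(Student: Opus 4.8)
The plan is to reduce everything to a single claim: \emph{$SPECIAL\cap RECOG_{n}$ is itself a recognizable set}. Granting this, both assertions follow immediately. The inclusion $SPECIAL\cap RECOG_{n}\subseteq SPECIAL\cap RECOG$ is trivial since $RECOG_{n}\subseteq RECOG$, and it is proper because, by Corollary \ref{unrecog}, $SPECIAL\cap RECOG$ is \emph{not} recognizable while $SPECIAL\cap RECOG_{n}$ is; hence the two sets differ. For the "consequently" clause: if $RECOG_{n}=RECOG$ held, then $SPECIAL\cap RECOG_{n}=SPECIAL\cap RECOG$, contradicting what we just obtained, so $RECOG_{n}\subsetneq RECOG$.

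To recognize $SPECIAL\cap RECOG_{n}$, fix a canonical enumeration $(P_{i}\mid i\in\omega)$ of the $ITRM$-programs using at most $n$ registers and let $H_{n}$ be the corresponding halting solver from Theorem \ref{haltingproblem}. On input $y$: first decide, via Lemma \ref{recogspecial}, whether $y$ is special; if not, output $0$. If $y$ is special, then $y\in L_{\omega_{1}^{CK,y}}$, and, starting from the construction behind Lemma \ref{containinglevels} and iterating it upward through $\omega_{2}^{CK,y},\omega_{3}^{CK,y},\dots$ by Theorem \ref{compstrength1}, compute from $y$ a code $c$ of some $L_{\alpha}$ with $y\in L_{\alpha}$ and $\alpha\geq\omega_{n+2}^{CK,y}$. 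By Theorem \ref{haltingtimes}, for any $z$ coded in $c$ the computation $P_{i}^{z}$ halts, if at all, inside $L_{\alpha}$, so using $c$, $H_{n}$ and the absoluteness of computations one can decide uniformly, for each $i$ and each index $j$ with $z$ the real coded by $j$ in $c$, whether $P_{i}^{z}\downarrow=1$. Now search through $i\in\omega$: if $P_{i}^{y}\downarrow=1$ and there is no $z\neq y$ coded in $c$ with $P_{i}^{z}\downarrow=1$ (the latter an $\omega$-loop made to terminate by the flag-register device from the proof that $COMP\subseteq RECOG$), output $1$; if no such $i$ is ever found, output $0$. Completeness is clear: if some $P_{i}$ with at most $n$ registers recognizes $y$, then $P_{i}^{y}\downarrow=1$ and $P_{i}^{z}\downarrow=0$ for every $z\neq y$ whatsoever, so the procedure halts with output $1$.

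The hard part will be soundness: we must exclude that the procedure outputs $1$ on some $y\notin RECOG_{n}$, i.e.\ that some $n$-register $P_{i}$ has $P_{i}^{y}\downarrow=1$ and admits a confuser $z\neq y$ with $P_{i}^{z}\downarrow=1$, yet none inside $L_{\alpha}$. The statement $\exists z\,(z\neq y\wedge P_{i}^{z}\downarrow=1)$ is $\Sigma_{2}^{1}(y)$, and since $y$, being special, is constructible, Shoenfield's theorem gives that it holds iff it holds in $L$; moreover the least level containing $y$ in which there are two distinct reals on which $P_{i}$ outputs $1$ is $\Sigma_{1}$-stable (relative to $y$) and, in the case where $y$ already appears before it, one checks that this level is $\leq\alpha$, so the confuser is visible in $c$ and the procedure correctly outputs $0$. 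The remaining configuration — $y$ being the \emph{unique} early real on which $P_{i}$ outputs $1$, with the confuser buried at a $\Sigma_{1}$-stable stage above $\omega_{n+2}^{CK,y}$ — has to be ruled out by a finer reflection argument exploiting both the bound on $n$-register halting times (Theorem \ref{haltingtimes}) and the fact that $y\in L_{\omega_{1}^{CK,y}}$ forces $y$ to appear early relative to its own admissibles; this is the technical core of the argument. Once soundness is in place, the procedure recognizes exactly $SPECIAL\cap RECOG_{n}$, which is all that the reduction in the first paragraph requires.
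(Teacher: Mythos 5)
Your reduction (``if $SPECIAL\cap RECOG_{n}$ were recognizable, then by Corollary \ref{unrecog} it cannot equal $SPECIAL\cap RECOG$, and the rest follows'') is logically fine, but the lemma it rests on --- that $SPECIAL\cap RECOG_{n}$ is recognizable outright --- is exactly what you do not prove. The soundness step you flag as ``the technical core'' is a genuine gap, not a routine verification: for an $n$-register program $P_{i}$ with $P_{i}^{y}\downarrow=1$, a confuser $z\neq y$ with $P_{i}^{z}\downarrow=1$ (or a $z$ on which $P_{i}$ diverges, which your local test does not even look for) need not appear in any $L_{\alpha}$ whose code is computable from $y$. The least $L$-level containing such a $z$ is, as you note, tied to a $\Sigma_{1}$-stable ordinal relative to $y$, and nothing bounds that by $\omega_{n+2}^{CK,y}$ or by any ordinal reachable from $y$; a program can perfectly well accept one ``low'' real and one lost-melody-style real far above every $y$-computable level. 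So checking the reals coded in $c$ cannot certify global uniqueness, and there is no evident ``finer reflection argument'' that rescues it; indeed it is not clear that $SPECIAL\cap RECOG_{n}$ is recognizable at all, and the paper never claims it.

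The paper's proof is designed precisely to avoid this obstacle. It argues under the reductio hypothesis $RECOG_{n}=RECOG$ and searches, for a given special $x$ with a code $r$ of some $L_{\alpha}\ni x$, for a $j$ such that $P_{j}^{x}\downarrow=1$ and $P_{j}^{y}\downarrow=0$ for every special $y<_{L}x$ (all such $y$ lie in $L_{\alpha}$, so this is decidable from $r$ together with the bounded halting solver). The two directions are asymmetric: if the search fails, then certainly no $n$-register program recognizes $x$, so $x\notin RECOG_{n}=RECOG$; if it succeeds, one does \emph{not} claim $P_{j}$ recognizes $x$, but only that $x$ is the $<_{L}$-least special real accepted by $P_{j}$ with all smaller special reals rejected --- and that characterization itself can be turned into a recognizing program for $x$ (using as many registers as needed), so $x\in RECOG$. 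Confusers above $L_{\alpha}$ are thereby rendered irrelevant, which is the idea missing from your argument. As written, your proposal leaves the decisive step unproved, so it does not establish the theorem.
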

\begin{proof}
Assume for a contradiction that $n\in\omega$ is such that\\ $RECOG_{n}=RECOG$, i.e. that every recognizable real is recognizable by a program using at most $n$ registers. 
We claim that it follows that we can recognize $SPECIAL\cap RECOG$, which contradicts Lemma \ref{unrecog}.\\
Let $C$ be as in Lemma \ref{containinglevels}, set $m=max(C,n)$, and pick a canonical enumeration $(P_{i}|i\in\omega)$ of the $ITRM$-programs using at most $m$ registers. 
Now, let $x$ be an arbitrary real. We want to test whether $x\in SPECIAL\cap RECOG$. By Lemma \ref{recogspecial}, we can test whether $x$ is special. Let us hence from now on assume that it is. It remains to check whether $x$ is recognizable.
By Lemma \ref{safetytest}, let $R$ be an $ITRM$-program such that
$R^{x}(i)\downarrow=1$ iff $P_{i}^{x}(j)$ halts for all $j\in\omega$ and $R^{x}(i)\downarrow=0$, otherwise.\\
By Lemma \ref{uniformcontaininglevels}, we can (uniformly in $x$) compute a code $r$ for some $L_{\alpha}$ containing $x$.\\
Now, we search through $\omega$ for a program $P_{j}$ using at most $n$ registers with the following properties: (the number of registers used in $P_{i}$ can easily be computed from $i$)\\
(1) $P_{j}^{x}\downarrow=1$. This can be tested by using a halting problem solver for programs using at most $n$ registers (see Theorem \ref{haltingproblem}) and, in case $P_{j}^{x}\downarrow$, carrying out $P_{j}^{x}$.\\
(2) $P_{j}^{y}\downarrow=0$ for every special $y<_{L}x$. This can be done by searching through $\omega$ as the $<_{L}$-relation can be decided using $r$. Then, we apply step (1) to each such $y$.\\
If $x$ is recognizable and $P_{j}$ is a program using $n$ registers that recognizes $x$, then this search will be successful when the check is performed for $j$ at the latest. Hence, if the search is unsuccessful, then $x\notin RECOG_{n}=RECOG$.\\
On the other hand, if such a $j$ is found, then $x$ is $<_{L}$-minimal such that $P_{j}^{x}\downarrow=1$, while $P^{y}_{j}\downarrow=0$ for all $y<_{L}x$. Thus we can determine for some $z$ whether $z=x$ as follows: First check
whether $z$ is special. If it is, compute a code $r^{\prime}$ for some $L_{\alpha}\ni z$ as described above, (which is possible as $z$ is special), check whether $P_{j}^{z}\downarrow=1$ and then check whether $P_{j}^{y}\downarrow=0$ for every $y<_{L}z$.\\
Hence $x\in SPECIAL\cap RECOG$ iff the search is successful and thus, $SPECIAL\cap RECOG$ is recognizable, contradicting Lemma \ref{unrecog}.
\end{proof}

\textbf{Question}: Given $n\in\omega$, what is the smallest $m$ such that\\ $RECOG_{m}-RECOG_{n}\neq\emptyset$? A careful analysis of our proofs above shows that this $m$ will be dominated by some linear function of $n$.

\section{Relativization}

In the realm of recognizability, there is a natural analogue of the notion of relative computability, namely recognizability relative to an oracle. This section introduces this notion and gives some basic properties of the corresponding
reducibility relation.

\begin{defini}
Let $x\in\mathfrak{P}^{L}(\omega)$. Then $\alpha\in On$ is called $\Sigma_{1}^{(x)}$-stable iff there is a $\Sigma_{1}$-formula $\phi(v)$ with a parameter $v$ such that $\alpha$ is minimal with $x\in L_{\alpha}$ and $L_{\alpha}\models\phi(x)$.
We denote by $\sigma^{x}$ the supremum of the $\Sigma_{1}^{x}$-stable ordinals.
\end{defini}

\begin{defini}
 Let $x,y\in\mathfrak{P}(\omega)\cap L$. We say that $x$ is recognizable in $y$ iff there is an $ITRM$-program $P$ such that, for all $z\in\mathfrak{P}(\omega)\cap{L}$, $P^{z\oplus y}\downarrow=1$ iff $z=x$ and otherwise $P^{z\oplus y}\downarrow=0$.
We write $x\leq_{RECOG}y$ if $x$ is recognizable in $y$.\\
If $x\leq_{RECOG}y$, but not $y\leq_{RECOG}x$, we write $x<_{RECOG}y$.
\end{defini}

Most of our results above relativize in a straightforward way. We summarize them in the following theorem:

\begin{thm}{\label{basrel}}
Let $x\in\mathfrak{P}^{L}(\omega)$.\\
(i) There is a real $r$ computable in $x$, but not recognizable in $x$.\\
(ii) For every $\delta<\sigma^{x}$, there is a $\delta$-gap in the $x$-recognizables.\\
(iii) For $x\in\mathfrak{P}^{L}(\omega)$, $\sigma^{x}$ is the minimal $\alpha$ such that $RECOG^{x}\subseteq L_{\alpha}$.\\
(iv) If $x$ is computable in the oracle $y$, then $x\leq_{RECOG}y$.
\end{thm}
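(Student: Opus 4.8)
The plan is to obtain all four parts by relativizing the corresponding unrelativized results, carrying the oracle $x$ (resp. $y$) along throughout. The common device is that every recognition procedure built above (the truth-evaluation of Theorem~\ref{LoMe}, Corollary~\ref{focheck}, and the containment test of Lemma~\ref{containmenttest}) continues to work when the machine is additionally handed the oracle on the odd bits of its input, since it can read those bits off at will.

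Part (iv) is the relativization of the inclusion $COMP\subseteq RECOG$. Suppose $x$ is computable in $y$ by a program $R$. To see $x\leq_{RECOG}y$, I would design a program which, on oracle $z\oplus y$, reads $y$ off the odd bits, simulates $R$ (translating each query ``$n\in y$?'' into ``$2n+1\in z\oplus y$?'') to produce the bits of $x$ one at a time, and compares the $n$-th bit against $z(n)$ (the even bits). Using the two flag registers from the proof of $COMP\subseteq RECOG$, the program accepts exactly when every bit has been positively checked, i.e. when $z=x$, and rejects as soon as a discrepancy is found. The only point to verify is that each bit computation halts, which holds because $x$ is computable in $y$.

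Part (i) is the relativization of the Lost Melody Theorem (Theorem~\ref{LoMe}), i.e. the separation witnessed by a real $r$ that is recognizable in $x$ but not computable in $x$. I would let $\alpha$ be least with $x\in L_{\alpha}$ and $L_{\alpha}\models ZF^{-}$, and set $r=cc(L_{\alpha})$. That $r$ is not computable in $x$ follows as in Theorem~\ref{LoMe}: since any halting $x$-computation stops below $\omega_{\omega}^{CK,x}<\alpha$, the absoluteness lemma lets one run it inside $L_{\alpha}$, whence $r$ would be definable over $L_{\omega_{\omega}^{CK,x}}\in L_{\alpha}$ and a decoding of $L_{\alpha}$ would give $L_{\alpha}\in L_{\alpha}$. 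To recognize $r$ in $x$, on a candidate oracle $z\oplus x$ I would check that $z$ codes a well-founded, $\in$-minimal model of $ZF^{-}+V=L$ (as in Theorem~\ref{LoMe}); check, via the procedure of Lemma~\ref{containmenttest} reading $x$ off the odd bits, that the coded structure contains $x$; and finally verify the $<_{L}$-minimality of $z$ exactly as in Theorem~\ref{LoMe}.

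For part (iii) I would relativize Theorem~\ref{largegaps}(i)--(iii). If $r$ is recognized in $x$ by $P$, then $\phi(v):=\exists w\,P^{w\oplus v}\downarrow=1$ is $\Sigma_{1}$ and $x$ witnesses it, so the least $\beta$ with $x\in L_{\beta}$ and $L_{\beta}\models\phi(x)$ is $\Sigma_{1}^{(x)}$-stable; by absoluteness the unique $w$ with $L_{\beta}\models P^{w\oplus x}\downarrow=1$ equals $r$, giving $r\in L_{\beta}\subseteq L_{\sigma^{x}}$ and hence $RECOG^{x}\subseteq L_{\sigma^{x}}$. For minimality of $\sigma^{x}$ I would show $RECOG^{x}$ is cofinal in $\sigma^{x}$: for each $\Sigma_{1}^{(x)}$-stable $\beta<\sigma^{x}$, the projectum argument of Lemma~\ref{finestructure} (applied with parameter $x$ to the $\Sigma_{1}$-formula defining $\beta$) places a real coding $L_{\beta}$ in $L_{\beta+2}-L_{\beta}$, and the relativized strategy of Theorem~\ref{largegaps}(ii) recognizes the $<_{L}$-least such code; as these $\beta$ are unbounded in $\sigma^{x}$, no $\alpha<\sigma^{x}$ satisfies $RECOG^{x}\subseteq L_{\alpha}$. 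Part (ii) then follows by relativizing Theorem~\ref{smallgaps} and Theorem~\ref{largegaps}(iv): for $\delta<\sigma^{x}$, pick a $\Sigma_{1}^{(x)}$-stable $\gamma>\delta$ and recognize in $x$ a combined code of $L_{\gamma}$ together with a level carrying an $x$-recognizability gap of order type $\gamma$, using the order-type embedding test of \cite{WITRM} and the relativized truth-evaluation of Corollary~\ref{focheck}. The main obstacle throughout is, as in the unrelativized Lost Melody theorem, the verification of $<_{L}$-minimality of the codes; here it additionally requires the check that the coded level genuinely contains $x$ (supplied by Lemma~\ref{containmenttest}) and the observation that minimality of the relevant $L$-stage relative to $x$ forces the projectum to drop, so that the code lies just above the stage, as in Lemma~\ref{finestructure}.
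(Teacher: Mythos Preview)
Your proposal is correct and matches the paper's approach exactly: the paper's own proof consists of the single sentence that all four parts follow by obvious generalizations of the corresponding unrelativized results, and you have simply spelled those relativizations out in more detail. Note also that you have (correctly) read part~(i) as the relativized Lost Melody statement---a real recognizable in $x$ but not computable in $x$---since the printed wording transposes the two notions and would otherwise contradict part~(iv).
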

\begin{proof}
All of these are proved by obvious generalizations of the proofs for the corresponding non-relativized statements.
\end{proof}

\begin{prop}
 For any $x\in\mathfrak{P}^{L}(\omega)$, there is $y\in\mathfrak{P}^{L}(\omega)$ such that $x<_{RECOG}y$.
\end{prop}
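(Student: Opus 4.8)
The plan is to produce, for any given $x\in\mathfrak{P}^L(\omega)$, a real $y$ that codes enough information about $x$ and about the recognizability-theoretic landscape above $x$ to make $x$ recognizable from $y$, while being so complex in the $<_L$-order that it cannot in turn be recognized from $x$. The natural candidate is $y := x\oplus cc(L_\beta)$ for a suitably chosen countable ordinal $\beta$ with $x\in L_\beta$, where $\beta$ is taken to be a minimal level satisfying a first-order statement referring to $x$ (so that the usual Lost Melody machinery applies relative to $x$).

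First I would show $x\leq_{RECOG} y$. Given a real $w\oplus y'$ presented to the recognizing program, one reads off the two components of $y'=x'\oplus cc(L_\beta)'$; one checks, using Corollary \ref{focheck} relativized (as licensed by Theorem \ref{basrel}), that the second component codes a well-founded $L$-level $L_\beta$ which is $\in$-minimal with the chosen $\Sigma_\omega$-property mentioning the first component, and that the first component is exactly the real parameter witnessing this; then one checks $<_L$-minimality of the code exactly as in the proof of Theorem \ref{LoMe}, which is legitimate provided $\beta$ is chosen so that $\Sigma_\omega^{L_{\beta+1}}\{\{L_\beta\}\}=L_{\beta+1}$ — and this holds automatically since $\beta$ is minimal for a first-order theory, by Lemma \ref{finestructure}. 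Having recognized $y$ in the oracle $x$-free sense, one then simply reads $x$ off the even component, so $x$ is recognizable in the oracle $y$.

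Next I would argue $y\not\leq_{RECOG} x$, and this is the main obstacle. The idea is a counting/absoluteness argument parallel to Theorem \ref{largegaps}(i,iii): every real recognizable in the oracle $x$ lies in $L_{\sigma^x}$, because if $P^{z\oplus x}\downarrow=1$ recognizes it, then the $\Sigma_1$-statement $\exists z\, P^{z\oplus x}\downarrow=1$ (with parameter $x$) pins down a $\Sigma_1^{(x)}$-stable ordinal, and by absoluteness of computations the unique witness $z$ lies below it. So it suffices to choose $\beta$ large enough that $cc(L_\beta)\notin L_{\sigma^x}$, equivalently $\beta\geq\sigma^x$. Concretely one can take $\beta$ minimal such that $L_\beta\models ZF^-$ and $L_\beta$ "sees" a $\Sigma_1^{(x)}$-stable ordinal above any given bound, i.e. $L_\beta$ is beyond $\sigma^x$; one must then double-check that this $\beta$ is itself of the form "minimal model of a first-order statement with parameter $x$" so that the recognizability argument in the previous paragraph still goes through. (If $y$ were recognizable in $x$, then $y\in L_{\sigma^x}$, hence its odd component $cc(L_\beta)\in L_{\sigma^x}$, hence $L_\beta$ could be decoded inside $L_{\sigma^x}$; but $\beta\geq\sigma^x$ forces $L_{\sigma^x}\subseteq L_\beta$, and one extracts the usual contradiction $L_\gamma\in L_\gamma$ for the relevant $\gamma$.)

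The delicate point, and where I expect to spend the most care, is reconciling the two requirements on $\beta$: it must be beyond $\sigma^x$ so that $y$ is too $<_L$-large to be $x$-recognizable, yet it must be the minimal level of some first-order (or recursive-theory) property with parameter $x$ so that the relativized Lost Melody construction recognizes $cc(L_\beta)$ outright. The resolution is to let $\phi$ be a first-order formula expressing "$I$ am a model of $ZF^-+V=L$ containing $x$, and I contain a $\Sigma_1^{(x)}$-stable ordinal past every ordinal I contain that my internal recognizability predicate declares to bound $RECOG^x$" — phrased so that its minimal model genuinely sits at or above $\sigma^x$ — and to verify by a reflection argument (as in the proof of the theorem on cofinally many $ZF^-$-gaps) that such a minimal model exists and is countable. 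Everything else is a routine relativization of machinery already in the paper.
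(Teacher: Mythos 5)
There is a genuine problem, and it sits exactly at the point you flag as ``delicate'': your two requirements on $\beta$ are not merely hard to reconcile, they are contradictory. You insist that $y=x\oplus cc(L_{\beta})$ be recognizable outright (``in the oracle-$x$-free sense''), via the Lost Melody machinery applied to a $\beta$ that is minimal for some first-order property. But if $y$ is recognizable by some program $P$, then the program that ignores the $x$-part of its oracle and runs $P$ witnesses $y\leq_{RECOG}x$ --- which is precisely what you must rule out. Equivalently, by your own (correct) absoluteness argument in the second paragraph, anything recognizable in the oracle $x$ lies in $L_{\sigma^{x}}$, so recognizability of $cc(L_{\beta})$ forces $\beta<\sigma^{x}$, while the non-reducibility half forces $\beta\geq\sigma^{x}$. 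No choice of formula $\phi$, however cleverly phrased about ``internal recognizability predicates,'' can escape this: either its minimal model sits below $\sigma^{x}$ (and then $y\leq_{RECOG}x$), or the Lost Melody argument does not apply to it and your proof of $x\leq_{RECOG}y$ collapses as written.

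The missing observation is that $x\leq_{RECOG}y$ requires nothing like recognizing $y$. In the definition of relative recognizability the program receives $z\oplus y$ with the genuine $y$ supplied by fiat; it only has to decide whether the candidate $z$ equals $x$. So by Theorem \ref{basrel}(iv) it suffices that $x$ be \emph{computable} from $y$, which is trivial for $y=x\oplus r$ (read the even bits) or for $y=cc(L_{\alpha})$ with $x\in L_{\alpha}$ (decode the natural number representing $x$). This is how the paper argues: take any $\alpha>\sigma^{x}$ with $L_{\alpha}\models ZF^{-}$, set $y:=cc(L_{\alpha})$; then $y\notin L_{\sigma^{x}}\supseteq RECOG^{x}$ gives $y\not\leq_{RECOG}x$ by Theorem \ref{basrel}(iii), and computability of $x$ from $y$ gives $x\leq_{RECOG}y$. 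Your second paragraph already contains the entire non-reducibility argument correctly; once you drop the demand that $y$ be recognizable, your candidate works for any $\beta\geq\sigma^{x}$ with $L_{\beta}$ countable, no minimality or fine-structural conditions needed, and the first paragraph and the entire third paragraph become unnecessary.
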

\begin{proof}
By (iii) of the last theorem, we know that, for any $x$, $RECOG^{x}\subset L_{\sigma^{x}}$. As $\sigma^{x}<\omega_{1}$, there must be $y\in\mathfrak{P}^{L}(\omega)-L_{\sigma^{x}}$, and all these $y$ will not be recognizable in $x$.
So for such a $y$ and we have $y\not\leq_{RECOG}x$. Now pick some $\alpha>\sigma^{x}$ such that $L_{\alpha}\models ZF^{-}$ and set $z:=cc(L_{\alpha})$. Then $z\notin L_{\sigma^{x}}$, hence $z\not\leq_{RECOG}x$. But, on the other hand, $x\leq_{RECOG}z$ is obvious.
(In fact, of course every real in $L_{\alpha}$ is recognizable in $cc(L_{\alpha})$.) Hence $x\leq_{RECOG}z$, but $z\not\leq_{RECOG}x$, so $x<_{RECOG}z$.
\end{proof}

Relative computability and relative recognizability are linked in an obvious way:

\begin{prop}
 Let $x\leq_{ITRM}y$ denote the statement that $x$ is $ITRM$-computable in the oracle $y$. Let $x,y,z\in\mathfrak{P}(\omega)$ such that $x\leq_{RECOG}y$ and $y\leq_{ITRM}z$. Then $x\leq_{RECOG}z$.
\end{prop}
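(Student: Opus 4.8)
The plan is to realize the composed reduction by a straightforward oracle-substitution argument. Fix an $ITRM$-program $P$ witnessing $x\leq_{RECOG}y$, so that $P^{w\oplus y}\downarrow$ for every $w\in\mathfrak{P}(\omega)\cap L$, with output $1$ exactly when $w=x$ and $0$ otherwise. Since $y\leq_{ITRM}z$, fix an $ITRM$-program $Q$ with $Q^{z}(i)\downarrow=1$ for $i\in y$ and $Q^{z}(i)\downarrow=0$ for $i\notin y$; note that $Q^{z}(i)$ halts for every $i\in\omega$. The goal is to produce an $ITRM$-program $R$ with $R^{w\oplus z}\downarrow=1$ iff $w=x$ and $R^{w\oplus z}\downarrow=0$ otherwise, for all $w\in\mathfrak{P}(\omega)\cap L$, which is precisely what $x\leq_{RECOG}z$ asserts.

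The construction I would use is the following: on input oracle $u:=w\oplus z$, the program $R$ simulates the run of $P$ on the ``virtual'' oracle $w\oplus y$, resolving $P$'s oracle queries on the fly. If the simulated $P$ queries an even bit $2i$ (a bit of $w$), then $R$ answers with bit $2i$ of $u$, i.e. with bit $i$ of $w$. If $P$ queries an odd bit $2i+1$ (bit $i$ of $y$), then $R$ suspends the simulation, runs $Q$ on input $i$ with the sequence of odd bits of $u$ --- which is exactly $z$, and the re-indexing of oracle bits is clearly $ITRM$-computable --- as its oracle, and returns $Q^{z}(i)\in\{0,1\}$ to $P$ as the query answer, then resumes. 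Since $Q^{z}(i)\downarrow$ for every $i$, each such subcomputation terminates, so the simulation of $P$ unfolds exactly as the genuine computation $P^{w\oplus y}$; and since $P^{w\oplus y}\downarrow$ for every constructible $w$, $R^{w\oplus z}$ halts with the same output. Hence $R^{w\oplus z}\downarrow=1$ iff $P^{w\oplus y}\downarrow=1$ iff $w=x$, and $R^{w\oplus z}\downarrow=0$ otherwise, giving $x\leq_{RECOG}z$.

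I expect the only delicate point to be the implementation of the nested subroutine on an $ITRM$: before each call to $Q$ one must save the current simulated state of $P$ --- its active line together with its finitely many simulated register contents --- into auxiliary registers, restore it after $Q$ halts, and check that this bookkeeping survives the limit stages of $R$'s own (longer) run. This is precisely the subroutine-management technique already carried out in \cite{KoeMi} and \cite{ITRM}, and it is unproblematic here because the statement imposes no bound on the number of registers of $R$, so one may freely allot as many auxiliary registers as the simulation requires. Everything else is immediate, so this bookkeeping is the main --- and only mild --- obstacle.
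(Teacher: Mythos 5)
Your proposal is correct and is exactly the argument the paper intends: the paper's one-sentence proof ("all information about $y$ relevant for identifying $x$ can be computed from $z$") is precisely the oracle-substitution/simulation you spell out, with $Q^{z}$ answering the $y$-queries of $P$ on the fly. The subroutine bookkeeping you flag is indeed the standard $ITRM$ technique from the cited sources and poses no problem.
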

\begin{proof}
All information about $y$ relevant for identifying $x$ can be computed from $z$ by the assumption $y\leq_{ITRM}z$.
\end{proof}

In the other direction, this statement is false:

\begin{prop}{\label{nottrans}}
 There are $x,y,z\in\mathfrak{P}^{L}(\omega)$ such that $x\leq_{ITRM}y\leq_{RECOG}z$, but $x\not\leq_{RECOG}z$. 
\end{prop}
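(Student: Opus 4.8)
The plan is to exhibit a computation-to-recognition failure of transitivity by recycling the Lost Melody construction. The natural choice is to take $z$ to be the canonical code $cc(L_\alpha)$ of a well-chosen $L$-level, take $y$ to be a real $ITRM$-computable from $z$ (so $y \leq_{ITRM} z$), and then engineer $y$ so that some real $x$ computable from $y$ (say, $x$ is even a recursive real, so $x \leq_{ITRM} y$ trivially) fails to be recognizable in $z$. The key point is that recognizability relative to $z$ is bounded: by Theorem \ref{basrel}(iii), $RECOG^{z} \subseteq L_{\sigma^{z}}$, so it suffices to arrange that $x \notin L_{\sigma^{z}}$ while still having $x \leq_{ITRM} y$ and $y \leq_{RECOG} z$.

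Concretely, I would first fix some $\alpha$ large enough that $L_\alpha \models ZF^-$ and $\alpha > \sigma^{z}$ will be forced once $z$ is named; set $z := cc(L_\alpha)$. Every real in $L_\alpha$ is computable from $z$, in particular there is a real $y \in L_\alpha$ with $y \notin L_{\sigma^{z}}$ (such $y$ exists because $\mathfrak{P}^{L}(\omega) \cap L_\alpha$ is not bounded below $\sigma^{z}$ — indeed $\sigma^{z} < \alpha$ since $L_\alpha \models ZF^-$ and hence $L_\alpha$ sees $\sigma^{z}$). Now I want $x \leq_{ITRM} y$ with $x \notin L_{\sigma^{z}}$; the cleanest move is to take $x = y$ itself, provided $y$ is chosen so that $y \leq_{RECOG} z$. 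And $y \leq_{RECOG} z$ is immediate from the parenthetical remark in the previous proposition: every real in $L_\alpha$ is recognizable in $cc(L_\alpha) = z$, since from the oracle $z \oplus w$ one can decode $L_\alpha$ and simply check whether $w$ is the $i$-th real of $L_\alpha$ for the fixed index $i$ naming $y$. Then $x = y \leq_{ITRM} y = y \leq_{RECOG} z$, but $x = y \notin L_{\sigma^{z}} \supseteq RECOG^{z}$, so $x \not\leq_{RECOG} z$. Chaining through a third real is not even needed, but to match the literal statement with distinct-looking $x, y, z$ one can instead let $x$ be any recursive real and replace $y$ by $y \oplus x$ (still in $L_\alpha$, still not in $L_{\sigma^z}$, still recognizable in $z$), so that $x \leq_{ITRM} y\oplus x$ honestly.

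The only real verification is that $\sigma^{z} < \alpha$, i.e. that an $L$-level modelling $ZF^-$ already contains all $z$-recognizables and in particular knows the supremum of the $\Sigma_1^{z}$-stable ordinals; this follows from the relativization of the boundedness results (the relativized analogue of the theorem that $RECOG^{L_\beta} \subseteq L_\delta$ for some $\delta < \beta$ whenever $L_\beta \models ZF^-$), which is exactly what Theorem \ref{basrel} licenses. I expect this step — pinning down that $\sigma^{z} < \alpha$ so that the target real can be placed strictly above $\sigma^{z}$ but inside $L_\alpha$ — to be the main (and essentially only) obstacle; everything else is bookkeeping with the canonical-code machinery already set up in the proof of Theorem \ref{LoMe}.
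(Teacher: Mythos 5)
There is a genuine gap, and in fact your construction is internally contradictory. You require a real $y\in L_{\alpha}$ with $y\notin L_{\sigma^{z}}$ and simultaneously $y\leq_{RECOG}z$; but by Theorem \ref{basrel}(iii) we have $RECOG^{z}\subseteq L_{\sigma^{z}}$, so $y\leq_{RECOG}z$ forces $y\in L_{\sigma^{z}}$, the opposite of what you arranged. The root of the problem is the claim $\sigma^{z}<\alpha$ for $z=cc(L_{\alpha})$. This is false --- indeed the inequality goes the other way. As you yourself observe, every real of $L_{\alpha}$ is recognizable in $cc(L_{\alpha})$; since new reals appear cofinally below $\alpha$ when $L_{\alpha}\models ZF^{-}$, the inclusion $\mathfrak{P}(\omega)\cap L_{\alpha}\subseteq RECOG^{z}\subseteq L_{\sigma^{z}}$ already gives $\sigma^{z}\geq\alpha$ (and in fact $\sigma^{z}$ is much larger, since e.g.\ the least $\beta>\alpha$ with $z\in L_{\beta}$ and $L_{\beta}\models ZF^{-}$ is $\Sigma_{1}^{z}$-stable). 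Your justification, ``$L_{\alpha}$ sees $\sigma^{z}$,'' conflates two different objects: the boundedness theorem bounds $RECOG^{L_{\alpha}}$, the \emph{unrelativized} recognizables as computed inside the model $L_{\alpha}$, whereas $\sigma^{z}$ governs recognizability \emph{relative to the oracle} $z$ --- and $z=cc(L_{\alpha})$ is not even an element of $L_{\alpha}$. No choice of $y$ can make your scheme work, because any $y$ witnessing $y\leq_{RECOG}z$ is automatically inside $L_{\sigma^{z}}$.

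The intended counterexample runs in the opposite direction: make the oracle $z$ trivial and the intermediate real $y$ powerful. Take $\alpha$ minimal with $L_{\alpha}\models ZF^{-}$, let $y=cc(L_{\alpha})$ and $z=0$. Then $y$ is the Lost Melody real, so $y\in RECOG$, i.e.\ $y\leq_{RECOG}0$. By the corollary following the boundedness theorem, the $<_{L}$-least nonrecognizable real $x$ already lies in $L_{\alpha}$, and every real of $L_{\alpha}$ is $ITRM$-computable from $cc(L_{\alpha})$, so $x\leq_{ITRM}y$. But $x\not\leq_{RECOG}0$ precisely because $x$ is not recognizable. The essential ingredient you are missing is that corollary locating a nonrecognizable real \emph{inside} the minimal $ZF^{-}$-level, which is what lets a recognizable real code an unrecognizable one.
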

\begin{proof}
 Let $x$ be the $<_{L}$-smallest nonrecognizable real and let $y=cc(L_{\alpha})$, where $\alpha$ is minimal with $L_{\alpha}\models ZF^{-}$. Above we proved that $x\in L_{\alpha}$. Now it is easy to see that $x$ is computable from $y$: In $y$, $x$ must be coded by
some natural number $n$. For every natural number $i$, we can determine a natural number $j$ coding it in $y$. Hence, we can check for every $k\in\omega$ whether or not $k\in x$, given $cc(L_{\alpha})=y$. (In fact, the argument shows that every real in $L_{\alpha}$ 
is computable in the oracle $y$.)\\
Now $y\in RECOG$, hence $y\leq_{RECOG}0$. But obviously, $x\not\leq_{RECOG}0$, as $x$ is unrecognizable.
\end{proof}

It might be tempting to define $x\sim_{RECOG}y$ iff $x\leq_{RECOG}y$ and $y\leq_{RECOG}x$ and consider in this way 'recognizability degrees'. Unfortunately, by Proposition \ref{nottrans} and part d) of Theorem \ref{basrel}, $\leq_{RECOG}$ is not 
transitive, and hence $\sim_{RECOG}$ defined in this way will not be an equivalence relation. \\
Nevertheless, we can formulate some concepts and pose some questions typical for a degree theory.

\begin{defini}
Let $x,y\in\mathfrak{P}^{L}(\omega)$. $x$ and $y$ are $RECOG$-comparable iff $x\leq_{RECOG}y$ or $y\leq_{RECOG}x$. Otherwise, they are $RECOG$-incomparable.
\end{defini}

Interestingly, this reducibility relation becomes tamer if we iterate it:

\begin{defini}
 Let $x\leq_{RECOG}^{2}y$ iff there is $a\in\mathfrak{P}^{L}(\omega)$ such that $x\leq_{RECOG}a\leq_{RECOG}y$. If $x\leq_{RECOG}^{2}y$, we say that $x$ is $2$-recognizable in $y$.
\end{defini}

\begin{thm}
$x\leq_{RECOG}^{2}y$ if $x\in L_{\sigma^{y}}$.
\end{thm}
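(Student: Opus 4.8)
The plan is to exhibit an explicit intermediate real: the canonical code of a sufficiently large $\Sigma_{1}^{(y)}$-stable level. I would start by noting that $L_{\sigma^{y}}=\bigcup\{L_{\beta}\mid\beta\text{ is }\Sigma_{1}^{(y)}\text{-stable}\}$ — if the $\Sigma_{1}^{(y)}$-stable ordinals have a largest element $\beta^{*}$ this union is $L_{\beta^{*}}=L_{\sigma^{y}}$, and otherwise $\sigma^{y}$ is a limit of $\Sigma_{1}^{(y)}$-stable ordinals and the union is again $L_{\sigma^{y}}$. Hence from $x\in L_{\sigma^{y}}$ I obtain a $\Sigma_{1}^{(y)}$-stable ordinal $\beta$ with $x\in L_{\beta}$, and I set $a:=cc(L_{\beta})\in\mathfrak{P}^{L}(\omega)$. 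It then suffices to establish $x\leq_{RECOG}a$ and $a\leq_{RECOG}y$, for together these give $x\leq_{RECOG}a\leq_{RECOG}y$, i.e. $x\leq_{RECOG}^{2}y$.

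The reduction $x\leq_{RECOG}a$ is immediate. Since $x\in L_{\beta}$, the real $x$ has a fixed internal name $n_{x}\in\omega$ under the bijection underlying $a=cc(L_{\beta})$, and I hard-wire $n_{x}$ into the recognizing program. On an oracle $z\oplus a$ the program decodes from $a$ the real named by $n_{x}$ — using the code-finding recursion from the proof of Lemma \ref{containmenttest} to compute, for each $m\in\omega$, the internal name $c_{m}$ of $m$ and then checking whether $p(c_{m},n_{x})\in a$ — and compares the result bitwise with $z$, halting with output $1$ exactly when all bits agree. This is the observation, already recorded after Theorem \ref{basrel}, that every real of $L_{\beta}$ is recognizable in $cc(L_{\beta})$.

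The reduction $a\leq_{RECOG}y$ is the relativization to the parameter $y$ of the $\Sigma_{1}$-stable case of Theorem \ref{largegaps}(ii), which is among the statements subsumed by Theorem \ref{basrel}. Fixing a $\Sigma_{1}$-formula $\phi(v)$ with $\beta$ least such that $y\in L_{\beta}$ and $L_{\beta}\models\phi(y)$, the recognizing program, on an oracle $z\oplus y$, runs the Lost Melody apparatus from the proof of Theorem \ref{LoMe} — well-foundedness testing together with the truth-predicate evaluation of Corollary \ref{focheck}, the real parameter $y$ being extracted from the oracle as in Lemma \ref{containmenttest} — to check in turn that $z$ codes an $L$-level, that $y$ lies in it and $\phi(y)$ holds there, that this level is $\in$-minimal with that property, and finally, by the $<_{L}$-minimality test of Theorem \ref{LoMe} (licensed by the relativized form of Lemma \ref{finestructure}, which places $cc(L_{\beta})$ in $L_{\beta+3}$), that $z$ is the $<_{L}$-least real coding it; it outputs $1$ iff every check succeeds, i.e. iff $z=cc(L_{\beta})=a$.

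Concatenating the two reductions yields $x\leq_{RECOG}^{2}y$. I do not expect either reduction to require substantial new work: the first is trivial once $\beta$ has been chosen, and the second is a routine, if slightly tedious, relativization of the already-established recognizability of canonical codes of $\Sigma_{1}$-stable levels, the only delicate point being that the Lost Melody truth-predicate procedure must be carried out with the real parameter $y$ supplied from the oracle. The genuinely new idea — and the reason the iterate $\leq_{RECOG}^{2}$ is tamer than $\leq_{RECOG}$ — is to use the cofinality of $RECOG^{y}$ in $L_{\sigma^{y}}$ in order to choose the $\Sigma_{1}^{(y)}$-stable level $\beta$ \emph{large enough to contain $x$}: then $x$ is trivially recognizable in $a=cc(L_{\beta})$, while $a$ is still recognizable in $y$ because $\beta$ is $\Sigma_{1}^{(y)}$-stable.
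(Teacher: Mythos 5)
Your proposal is correct and follows essentially the same route as the paper: pick a $\Sigma_{1}^{(y)}$-stable $\beta$ with $x\in L_{\beta}$, use $a=cc(L_{\beta})$ as the intermediate real, observe that $a$ is recognizable in $y$ by the relativized Lost Melody argument, and that $x$ is recognizable in $a$ because it is computable from $a$ via its internal name. The only (immaterial) difference is that the paper phrases the first reduction as $x\leq_{ITRM}cc(L_{\beta})$ combined with part (iv) of Theorem \ref{basrel}, whereas you build the recognizing program directly.
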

\begin{proof}
Let $x\in L_{\sigma^{y}}$ be a real and pick $\alpha<\sigma^{y}$ such that $\alpha$ is $\Sigma_{1}^{y}$-stable and $x\in L_{\alpha}$. Arguing as in the proof of the Lost Melody Theorem, $cc(L_{\alpha})$ is recognizable in $y$.\\
Furthermore, we see that $x\leq_{ITRM} cc(L_{\alpha})$: Let $j$ code $x$ in $cc(L_{\alpha})$ and let $i\in\omega$ be arbitrary. To test whether $i\in x$, we identify a code $k$ for $i$ in $cc(L_{\alpha})$ in the way described in the proof
of Lemma \ref{containmenttest} and then check whether $p(k,j)\in cc(L_{\alpha})$.\\
Hence, we have $x\leq_{RECOG} cc(L_{\alpha})$ by (iv) of Theorem \ref{basrel}. So $x\leq_{RECOG}cc(L_{\alpha})\leq_{RECOG}y$, thus $x\leq_{RECOG}^{2}y$, as desired.
\end{proof}

\noindent
\textbf{Remark}: In particular, every real in $L_{\sigma}$ is $2$-recognizable in the empty oracle.\\

\noindent
\textbf{Questions}: (1) Is there a pair of $RECOG$-incomparable constructible reals?\\
(2) Is there, for any constructible reals $x<_{RECOG}y$, a constructible real $z$ such that $x<_{RECOG}z<_{RECOG}y$?

\newpage

\section{Acknowledgements}
We are indebted to Philip Welch, who, in two short conversations on CiE $2012$, suggested Theorem \ref{largegaps} as an answer to an open question and the main ideas for the proofs of Theorem \ref{shoenfield} and Theorem \ref{mansfield}. He also gave some helpful
corrections and suggestions on an earlier version of this paper. We also thank Joel Hamkins for suggesting the proof of Lemma \ref{specialunbounded}.

\end{document}